\newtheorem{thm}{Theorem}[section]
\newtheorem{prop}[thm]{Proposition}
\newtheorem{lem}[thm]{Lemma}
\newtheorem{cor}[thm]{Corollary}
\theoremstyle{definition}
\newtheorem{defi}[thm]{Definition}
\theoremstyle{remark}
\newtheorem{remark}[thm]{Remark}
\newenvironment{packed_enum}{
\begin{enumerate}
  \setlength{\itemsep}{1pt}
  \setlength{\parskip}{0pt}
  \setlength{\parsep}{0pt}
}{\end{enumerate}}
\DeclareMathAlphabet{\mathscr}{OT1}{pzc}{m}{it}
\def\romanenum{\renewcommand{\labelenumi}{\textup{(}\roman{enumi}\textup{)}}}
\def\NR{\textup{NR}}
\def\Res{\textup{Res}}
\def\red{\textup{red}}
\def\Q{\mathbb{Q}}
\def\chr{\textup{char }}
\def\ker{\textup{ker }}
\def\im{\textup{im }}
\def\coker{\textup{coker }}
\def\Hom{\textup{Hom}}
\def\A{\mathbb{A}}
\def\O{\mathcal{O}}
\def\Z{\mathbb{Z}}
\def\N{\mathbb{N}}
\def\G{\mathbb{G}}
\def\m{\mathfrak{m}}
\def\C{\mathbb{C}}
\def\plim{\varprojlim}
\def\R{\mathbb{R}}
\def\id{\textup{id}}
\def\sep{{\textup{\footnotesize sep}}}
\def\Gal{\textup{Gal}}
\def\GL{\textup{GL}}
\def\Lie{\textup{Lie}}
\def\Aut{\textup{Aut}}
\def\length{\textup{length}}
\def\mod{\;\textup{mod}\;}
\def\rank{\textup{rank}\,}
\def\ur{\textup{ur}}
\def\alg{\textup{alg}}
\def\Tori{\textup{Tori}}
\def\End{\textup{End}}
\def\bA{\textup{bAr}}
\def\Ind{\textup{Ind}\,}
\def\Inf{\textup{Inf}\,}
\def\Ar{\textup{Ar}}
\def\Tori{\textup{Tori}}
\def\Mod{\textup{Mod}}
\def\uTor{\textup{aTori}}
\def\aTor{\textup{aTori}}
\def\Tori{\textup{Tori}}
\def\Norm{\textup{Norm}}
\def\lin{\textup{lin}}
\def\nonlin{\textup{nonlin}}
\def\bAr{\textup{bAr}}
\def\ab{\textup{ab}}
\def\Nm{\textup{Nm}}
\def\sA{\mathcal{A}}
\def\phi{\varphi}
\begin{document}

\title{A refinement of the Artin conductor and the base change conductor}

\author{Ching-Li Chai and Christian Kappen}

\begin{abstract}
For a local field $K$ with positive residue characteristic $p$, we introduce, in the first part of this paper, a refinement $\bAr_K$ of the classical Artin distribution $\Ar_K$. It takes values in cyclotomic extensions of $\Q$ which are unramified at $p$, and it bisects $\Ar_{K}$ in the sense that $\Ar_{K}$ is equal to the sum of $\bAr_K$ and its conjugate distribution. Compared with $\frac{1}{2}\Ar_K$, the bisection $\bAr_K$ provides a higher resolution on the level of tame ramification. In the second part of this article, we prove that the base change conductor $c(T)$ of an analytic $K$-torus $T$ is equal to the value of $\bAr_{K}$ on the $\Q_p$-rational Galois representation $X^*(T)\otimes_{\Z_p}\Q_p$ that is given by the character module $X^*(T)$ of $T$. We hereby generalize a formula for the base change conductor of an algebraic $K$-torus, and we obtain a formula for the base change conductor of a semiabelian $K$-variety with potentially ordinary reduction.
\end{abstract}

\maketitle

\pagestyle{myheadings}
\markboth{Ching-Li Chai and Christian Kappen}{A bisection of the Artin character and a formula for the base change conductor}

\tableofcontents

% Questions. Where can we drop the discreteness assumption on our valuation?

\section{Introduction}

%Let $K$ be a local field. The Néron lft-model
%\footnote{In this article, we do not assume Néron models to be of finite type; thus by a Néron model we mean a Néron lft model in the sense of \cite{blr}.} 
%of a smooth $K$-group scheme of finite type is a rather elusive object, despite being defined in terms of a simple universal property. A priori, only its ``smooth points'' are known, while, if it exists at all, its Lie algebra and its component group may be difficult to understand. These difficulties are partly due to the fact that the formation of the Néron lft-model, in general, does not commute with ramified base change -- a phenomenon which shows that the Néron lft-model displays arithmetic properties of the given group scheme that are related to the base field $K$. In many interesting cases, the identity component of the Néron lft-model stabilizes after a sufficiently big finite field extension; one can then define the so-called \emph{base change conductor} of the given group, which is a rational number measuring the growth of the volume of the Néron model upon passage to a stabilizing field extension. The base change conductor is somewhat more accessible than the Néron model itself, yet it still captures interesting arithmetic information.

The base change conductor $c(T)$ of a semiabelian variety $A$ over a local field $K$ is an interesting arithmetic invariant; it measures the growth of the volume of the Néron model of $A$ under base change, cf.\ \cite{chaiyudeshalit} \S 10. In the article \cite{chaiyudeshalit}, it is shown that if $T$ is an algebraic $K$-torus, then $c(T)$ satisfies the formula
\[
c(T)\,=\,\frac{1}{2}\Ar_K(X^*(T)_\Q)\;,\quad\quad(*)
\]
where $\Ar_K(\cdot)$ denotes the Artin distribution over $K$ and where $X^*(T)_\Q$ is the $\Q$-rational character group of $T$. In the present article, we generalize this result to a statement on analytic $K$-tori, for local fields $K$ with positive residue characteristic. As a corollary, we obtain, for such fields $K$, a formula for the base change conductor $c(A)$ of a semiabelian $K$-variety $A$ with potentially ordinary reduction; in some sense, we are using analytic $K$-tori as a link to connect $A$ with an algebraic $K$-torus. In \cite{chaisemiab} \S 6, the existence of such a formula was conjectured based on the fact that $c(A)$ is an isogeny invariant if $A$ has potentially ordinary reduction, cf.\ \cite{chaisemiab} Thm.\ 6.8. Let us note that we consider the problem of computing the base change conductor of a semiabelian $K$-variety in the special case of potentially ordinary reduction, but without making any assumptions on the dimension or the ramification behavior.

Let us assume that $K$ has positive residue characteristic $p$. To generalize formula $(*)$, we extend the classes of objects on both sides of that equation. As for the left hand side, we consider analytic $K$-tori instead of algebraic $K$-tori. These are geometric objects which can be described geometrically via rigid or uniformly rigid geometry. They also admit Galois-theoretic descriptions in terms of their character modules, which are strongly continuous $\Z_p[G_K]$-modules, i.e.\ $\Z_p[G_K]$-modules which are finite and free over $\Z_p$ and whose $G_K:=\Gal(K^\sep/K)$-actions are trivial on some subgroups of finite index. In line with this dichotomy of geometry and Galois theory, the base change conductor $c(T)$ of an analytic $K$-torus $T$ can be defined geometrically as $c_\textup{geom}(T)$, via a theory of formal Néron models for uniformly rigid spaces as it has been developed in \cite{formalnms}, or Galois-theoretically as $c_\Gal(T)$, in terms of the character module $X^*(T)$ of $T$. We will show that in fact $c_\textup{geom}(T)$ and $c_\Gal(T)$ coincide, so that both definitions of $c(T)$ are equivalent. The geometric point of view is the more natural one, and it allows further generalizations. On the other hand, the Galois-theoretic recipe is technically simpler to handle, and it is sufficient for proving the desired formula for the base change conductor of a semiabelian $K$-variety with potentially ordinary reduction.

As for the right hand side of equation $(*)$, instead of $\Q[G_K]$-modules, we consider strongly continuous $\Q_p[G_K]$-modules. However, $1/2\cdot\Ar_K$ does not compute the base change conductor for a general analytic $K$-torus $T$: to obtain a correct formula for $c(T)$, we have to replace the trivial bisection $1/2\cdot\Ar_K$ of $\Ar_K$ by a suitable nontrivial bisection, where a bisection of $\Ar_K$ is a distribution $\chi$ on the space of locally constant functions on $G_K$ such that
\[
\Ar_K\,=\,\chi\,+\,\overline{\chi}\;,
\]
with $\overline{\chi}$ denoting the conjugate of $\chi$ (in the sense explained in the first paragraph of Section \ref{notsec}). Let us observe that every such bisection $\chi$ coincides with $1/2\cdot \Ar_K$ on the characters of $\Q[G_K]$-modules. We define a specific nontrivial bisection $\bAr_K$ of $\Ar_K$, cf.\ Definition \ref{bcconddefi}, which encodes more information on tame ramification than $1/2\cdot\Ar_K$ does. It takes values in cyclotomic $p$-unramified extensions of $\Q$, and it exhibits, for strongly continuous $\Q_p[G_K]$-modules, the same behavior with regard to restriction to subgroups as $\Ar_K$, cf.\ Prop.\ \ref{barresprop}. Our definition of $\bAr_K$ relies on the nontrivial bisection
\[
\bAr_n\,=\,\frac{1}{n}\sum_{i=0}^{n-1}i\cdot(\zeta\mapsto\zeta^i)
\]
of the augmentation character of the group $\mu_n$ of $n$-th roots of unity in an algebraically closed field of characteristic zero, for varying natural numbers $n$ prime to $p$. Let us point out that our definition of $\bAr_K$ also makes sense when $K$ is of equal characteristic zero.
%after identifying the groups $\mu_n$, for varying $n$ prime to $p$, with the tame ramification groups of the finite Galois extensions of $K$.
%The distribution $\bAr_K$ has the bisection property
%\[
%\Ar_K\,=\,\bAr_K\,+\,\overline{\bAr_K}\;,
%\]
%where $\overline{\;\cdot\;}$ indicates complex conjugation, and it coincides with $\frac{1}{2}\Ar_K$ when restricted to $\Q$-rational representations. For $\Q_p$-rational representations, $\bAr_K$ already differs from $\frac{1}{2}\Ar_K$, providing finer information on the level of tame ramification. On the other hand, $\bAr_K$ exhibits, for $\Q_p$-rational representations, the same behavior with regard to restriction to subgroups as $\Ar_K$, cf.\ Prop.\ \ref{barresprop}. On the right hand side of $(*)$, apart from replacing $\frac{1}{2}\Ar_K$ by $\bAr_K$, we also replace the finite $\Q[\Gal(K^\sep/K)]$-module $X^*(T)_\Q$ by a finite $\Q_p[\Gal(K^\sep/K)$-module whose Galois action is trivial on a subgroup of finite index.

The main theorem \ref{mainthm} of this article is the following strengthening of formula $(*)$:

\begin{thm}\label{mainthmintro}
If $T$ is an analytic $K$-torus with character module $X^*(T)$, then
\[
c(T)\,=\,\bAr_K(X^*(T))\;.
\]
\end{thm}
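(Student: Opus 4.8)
The plan is to replace the geometric base change conductor by its Galois-theoretic avatar, to linearise the statement into a comparison of two functionals on a representation ring, to cut it down by Artin induction to rank-one tori split by a tamely ramified cyclic extension, and to settle that case by an explicit computation. Concretely: granting the comparison $c_{\textup{geom}}(T)=c_{\Gal}(T)$ — the point where the formal Néron model theory of \cite{formalnms} does its work — it suffices to prove $c_{\Gal}(T)=\bAr_K(X^*(T))$. Now $c_{\Gal}$ is additive in short exact sequences of strongly continuous $\Z_p[G_K]$-modules and is an isogeny invariant, hence depends only on $X^*(T)\otimes_{\Z_p}\Q_p$; the same holds for $\bAr_K(X^*(T))$ by the very definition of $\bAr_K$. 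So both sides descend to $\Q$-linear functionals $\lambda$ and $\mu$ on the Grothendieck group $R_{\Q_p}(G_K)_\Q$ of strongly continuous $\Q_p[G_K]$-modules, and the task is to show $\lambda=\mu$.

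The second step is a dévissage along induction from open subgroups. For a finite separable extension $L/K$ and an analytic $L$-torus $T'$, the analytic $K$-torus attached to $\mathrm{Ind}_{G_L}^{G_K}X^*(T')$ is the Weil restriction $\mathrm{Res}_{L/K}T'$, and both $c_{\Gal}$ and $\bAr_K$ split off the same discriminant term of $L/K$:
\[
c_{\Gal}\bigl(\mathrm{Res}_{L/K}T'\bigr)=\tfrac12\bigl(\operatorname{rk}X^*(T')\bigr)\,d_{L/K}+f_{L/K}\,c_{\Gal}(T'),\qquad \bAr_K\bigl(\mathrm{Ind}_{G_L}^{G_K}\psi\bigr)=\tfrac12(\dim\psi)\,d_{L/K}+f_{L/K}\,\bAr_L(\psi),
\]
with $d_{L/K}$ the valuation of the discriminant of $L/K$ and $f_{L/K}$ the residue degree; the formula for $\bAr_K$ is forced by the bisection $\bAr_K+\overline{\bAr_K}=\Ar_K$, the conductor–discriminant formula and the restriction statement of Prop.\ \ref{barresprop}, and the one for $c_{\Gal}$ should come directly out of the Galois-theoretic recipe, paralleling the algebraic case. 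Hence $(\lambda-\mu)\bigl(\mathrm{Ind}_{G_L}^{G_K}[V]\bigr)=f_{L/K}\cdot(\lambda_L-\mu_L)([V])$, where $\lambda_L,\mu_L$ are the analogous functionals over $L$. By Artin's induction theorem, $R_{\Q_p}(G_K)_\Q$ is spanned by classes $\mathrm{Ind}_{G_L}^{G_K}\psi$ with $\psi$ a faithful $\Q_p$-valued character of a cyclic quotient $\Gal(M/L)$ of $G_L$. When $p$ is odd such a $\psi$ has order dividing $p-1$, so $M/L$ is tamely ramified; when $p=2$ it has order at most $2$, is $\Q$-rational, and the base case below is already contained in $(*)$ — so assume $p$ odd. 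As neither $c_{\Gal}$ nor $\bAr_K$ changes under unramified base change, I may replace $L$ by the maximal unramified subextension of $M/L$ and thereby reduce to the base case: $M/L$ totally tamely ramified cyclic of degree $n\mid p-1$, and $T_j$ the rank-one analytic $L$-torus whose character module $\Z_p$ carries the action of $\Gal(M/L)\cong\mu_n$ given by $\zeta\mapsto\zeta^j$ with $\gcd(j,n)=1$. Here $\bAr_L(X^*(T_j))=j/n$ is immediate from the definition of $\bAr_K$ through $\bAr_n$, so everything comes down to $c_{\Gal}(T_j)=j/n$.

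For this last identity I would unwind the Galois-theoretic definition of the base change conductor for the explicit torus $T_j$. Over $M$ it becomes $\mathbb{G}_m^{\textup{an}}$, whose canonical invariant differential $dt/t$ generates the free rank-one lattice computing the conductor; a generator of the $\Gal(M/L)$-invariants over $L$ is then obtained by scaling with $\pi_M^{n-j}$, where $\pi_M$ is a uniformiser of $M$ with $\sigma(\pi_M)=\zeta\pi_M$ for a chosen generator $\sigma$ of $\Gal(M/L)$; dividing the resulting index of lattices by $[M:L]=n$ produces a value of the shape $j/n$ (the choice between $j/n$ and $(n-j)/n$ being exactly the choice of normalisation in the conjugation $\bAr_K\leftrightarrow\overline{\bAr_K}$), and the bisection identity becomes visible here as $j/n+(n-j)/n=1=\Ar_L(X^*(T_j))$ — this is the extra resolution on tame ramification that the nontrivial bisection records.

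I expect the main obstacle to lie not in the dévissage but at its two ends. At one end is the comparison $c_{\textup{geom}}=c_{\Gal}$ together with the basic functorial properties used above — additivity, isogeny-invariance, and behaviour under Weil restriction — which require a substantial amount of the theory of formal Néron models for uniformly rigid spaces. At the other end is the base-case computation itself: this is the single step in which the nontrivial bisection, rather than $\tfrac12\Ar_K$, genuinely enters, its normalisation is not determined by the bisection identity alone, and it is really pinned down only by compatibility with the classical formula $(*)$ on $\Q$-rational character modules.
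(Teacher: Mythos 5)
Your overall scaffolding — swap $c_{\textup{geom}}$ for $c_{\Gal}$, linearize via additivity and isogeny invariance, dévissage via Weil restriction/induction plus the restriction behaviour of $\bAr_K^{(p)}$, then a cyclic base case — matches the paper's route in outline, and the tame rank-one computation at the end is essentially the one the paper carries out. But there is a genuine gap in the middle of the dévissage.

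You invoke Artin's theorem to span $R_{\Q_p}(G_K)_\Q$ by classes $\Ind_{G_L}^{G_K}\psi$ and then assert that $\psi$ may be taken to be a \emph{one-dimensional} faithful $\Q_p$-valued character of a cyclic quotient, of order dividing $p-1$ (for odd $p$), so that the relevant $M/L$ is tamely ramified. Artin's theorem does not give that. Over $\Q_p$ it gives generation by inductions of $\Q_p$-\emph{rational} representations of cyclic subgroups, and a $\Q_p$-rational irreducible representation of a cyclic $p$-group is typically not one-dimensional: for $C$ cyclic of order $p^i$, the group algebra $\Q_p[C]$ has a $\varphi(p^i)$-dimensional irreducible factor $\Q_p[X]/\Phi_{p^i}(X)$, since $\Phi_{p^i}$ is irreducible over $\Q_p$. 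These wildly ramified pieces are precisely the representations your argument silently drops, and they cannot be recovered by tame data alone. This is why the paper, after the same reduction to totally ramified cyclic $L/K$, has to do real work on the wild part: it decomposes the cyclic group as $\Gamma^t\times\Gamma^w$, classifies $\Q_p[\Gamma]$-modules as $V^t\otimes V^w$, uses the factorization $X^{p^r}-1=\prod_i\Phi_{p^i}$ with each $\Phi_{p^i}$ irreducible over $\Q_p$ but defined over $\Q$ to replace $V^w$ by the regular representation of $\Gamma^w$ (invoking isogeny invariance and, implicitly, the classical formula $(*)$ for algebraic tori in the totally wild case), and only \emph{then} lands in the tame one-dimensional situation $K=K^w$. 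Your sketch has no counterpart of this step, so as written it proves the theorem only for tame cyclic pieces, not the full statement.

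Two smaller points. First, the claim "$c_{\Gal}$ is additive in short exact sequences" is not quite what is used or what holds at the integral level; what is used is additivity in \emph{direct sums} together with isogeny invariance (the nonlinear part $c_{\nonlin}$ is not an isogeny invariant by itself, as the paper's $\Q_2(\sqrt 2)$ example shows, so one cannot casually pass through arbitrary short exact sequences). Second, your remark that for $p=2$ the base case is already contained in $(*)$ is too optimistic: even for $p=2$ the wild part is not covered by tame induction, and the cyclotomic-polynomial dévissage over $\Q_2$ is still required.
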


Formula $(*)$ is easily seen to be equivalent to the fact that $c(\cdot)$ is an isogeny invariant for algebraic $K$-tori. While formula $(*)$ follows from Theorem \ref{mainthmintro}, we do not reprove it in this article, since the isogeny invariance of $c(\cdot)$ for algebraic $K$-tori enters into the proof of Theorem \ref{mainthmintro}.

The proof of Theorem \ref{mainthmintro} can be summarized as follows: there is a natural functor $\hat{\cdot}$ from the category $\Tori_K$ of algebraic $K$-tori to the category $\uTor_K$ of analytic $K$-tori, and if $T\in\Tori_K$, then de Shalit's recipe \cite{chaiyudeshalit} A1.9 shows that $c(T)=c_\Gal(\hat{T})$, where $c(T)$ is the classical base change conductor of $T$, which is defined in terms of algebraic Néron models, and where $c_\Gal(\hat{T})$ is the base change conductor of $\hat{T}$, defined Galois-theoretically. It is easily seen that $\hat{\cdot}$ induces an equivalence between $\uTor_K$ and the pseudo-abelian envelope of $\Tori_K$; since $c(\cdot)$ is an isogeny invariant on $\Tori_K$ by the main result of \cite{chaiyudeshalit}, it follows that $c_\Gal(\cdot)$ is an isogeny invariant on $\uTor_K$. Using Artin's theorem and the good properties of $\bAr_K$ mentioned above, the proof of Theorem \ref{mainthmintro} is now reduced to the case of a one-dimensional tame situation, which is handled explicitly. If the residue field of $K$ is algebraically closed, there is a functor $\hat{\cdot}$ from the category of semiabelian $K$-varieties with potentially ordinary reduction to $\uTor_K$, and again de Shalit's recipe \cite{chaiyudeshalit} A1.9 shows that $c(A)=c_\Gal(\hat{A})$, which yields the desired formula for the base change conductor of semiabelian varieties with potentially ordinary reduction.

This paper is structured as follows: in Section 2, we fix some notation which will be used throughout the article. In Section 3, we make no assumption on the residue characteristic of $K$; we define and study the nontrivial bisection $\bAr_{L/K}$ of $\Ar_{L/K}$, for finite Galois extensions $L/K$, see the introduction of Section \ref{bartconsec} for a more detailed discussion. We first consider the totally tamely ramified case (cf.\ \ref{tameramsec}),  then the general case (cf.\ \ref{generalramsec}). In Section \ref{uppernumbsec}, we express $\bAr_{L/K}$ in terms of the upper numbering ramification filtration of $\Gal(L/K)$; the resulting expression is then used in Section \ref{quotsec} to study the behavior of $\bAr_{L/K}$ with respect to pushforwards to quotients. Here we show that, for varying $L$, the characters $\bAr_{L/K}$ define a distribution $\bAr_K$ on the space of locally constant functions on $\Gal(K^\sep/K)$, which is then a bisection of $\Ar_K$. In Section \ref{subgrpressec}, we show that $\bAr_K$ (or rather, for $\chr\kappa_K=p>0$, its $\Gal(\Q_p^\ur/\Q_p)$-average) behaves well under restriction to subgroups, just like $\Ar_K$ does. In Section \ref{antorisec}, we discuss analytic $K$-tori, both geometrically and in terms of their character modules, assuming that $\chr\kappa_K=p>0$. In particular, we show that every analytic $K$-torus is, in the category $\uTor_K$ of analytic $K$-tori, a direct summand of an algebraic $K$-torus. In Section \ref{bccondsec}, we define the Galois-theoretic base change conductor $c_\Gal(T)$ of an analytic $K$-torus $T$ as the sum of a linear part (cf.\ \ref{linpartsec}) and a nonlinear part (cf.\ \ref{nonlinpartsec}), and we show that $c_\Gal(T)$ coincides with the geometric base change conductor $c_{\textup{geom}}(T)$ of $T$ that is defined in terms of formal Néron models. While the definition of $c_\Gal(T)$ does not require  Néron models for uniformly rigid spaces, the definition of its nonlinear part relies upon the classical Néron smoothening process, cf. Lemma \ref{welldeflem} and the subsequent Remark \ref{welldefrem}. In Corollary \ref{isoginvcor}, isogeny invariance of base change conductors is transferred from $\Tori_K$ to $\uTor_K$. In Section \ref{ratvirtsec}, we explain that for any finite Galois extension $L/K$, the isogeny invariant base change conductor on the category of $L$-split analytic $K$-tori gives rise to a rational virtual character of $\Gal(L/K)$ which is defined over $\Q_p$. The behavior of $c(\cdot)$ under Weil restriction in $\uTor_K$ or, equivalently, under induction of character modules, is discussed in Section \ref{weilressec}. In Section \ref{formulasec}, we prove our main result, Theorem \ref{mainthmintro} above: using the behavior of $c(\cdot)$ with respect to Weil restriction and the behavior of the $\Gal(\Q_p^\ur/\Q_p)$-average of $\bAr_K$ under restriction as well as Artin's theorem, we reduce, in Section \ref{redsec}, to the totally ramified cyclic case. The latter is then treated in Section \ref{cyccasesec}, by applying isogeny invariance multiple times, by using an induction argument, by invoking formula $(*)$ for algebraic $K$-tori in the totally wild case and by using the fact that the $p$-power cyclotomic polynomials are both defined over $\Q$ and irreducible over $\Q_p$. In Section \ref{abvarapplsec}, we deduce a formula for the base change conductor of a semiabelian $K$-variety $A$ with potentially ordinary reduction, and we explain how the character module $X^*(A)$ of the analytic $K$-torus attached to $A$ can be described in terms of an algebraic Hecke character in the case where $A$ is obtained from a CM abelian variety over a number field via base change to a place of potentially ordinary reduction.

\textbf{Acknowledgements.} The second author would like to express his gratitude to the University of Pennsylvania and to the NCTS of Taiwan for their hospitality and support. Moreover, he would like to thank Ulrich Görtz and Gabor Wiese for helpful discussions.

\section{Notation}\label{notsec}
Let us set up some notation which will be used throughout the paper.
%\begin{enumerate}
%\item
If $S$ is a set, we let $|S|$ denote its cardinality. We will implicitly use basic facts on representations of finite groups in characteristic zero, for which we refer to \cite{serre_linres}. If $G$ is a finite group and if $F$ is a field of characteristic zero, then an $F[G]$-module of finite $F$-dimension will also be called an $F$-rational representation of $G$, and if $V$ is such a representation, we write $\chi_V$ to denote its character, and we write $V^G$ for the subspace of $G$-invariant vectors. We write $\textbf{\textup{r}}_G$, $\textbf{\textup{1}}_G$ and $\textbf{\textup{u}}_G=\textbf{\textup{r}}_G-\textbf{\textup{1}}_G$ for the regular character, the trivial character and the augmentation character of $G$ respectively. If $\chi$ is an $F$-valued central function on $G$, we write $\overline{\chi}$ for the $F$-valued central function on $G$ that is given by $g\mapsto\chi(g^{-1})$; if $F$ is a subfield of the field of complex numbers $\C$ and if $\chi$ is a rational virtual character of $G$, then $\overline{\chi}$ is then the complex conjugate of $\chi$. Let $C(G,F)$ denote the finite-dimensional $F$-vector space of $F$-valued central functions on $G$; we will be using the perfect pairing
\[
(\cdot|\cdot)_G\,:\,C(G,F)\times C(G,F)\rightarrow F\,;\,(\chi_1|\chi_2)\mapsto\frac{1}{|G|}\sum_{g\in G}\chi_1(g)\overline{\chi_2}(g)\;.
\]
If $\alpha:G\rightarrow G'$ is a homomorphism of finite groups and if $\chi'$ is a central function on $G'$, we write $\alpha^*\chi':=\chi'\circ\alpha$; if $\chi'$ is a character of an $F$-rational representation $\rho:G'\rightarrow\GL(V')$ of $G'$, then $\alpha^*\chi$ is the character of the $F$-rational representation of $G$ that is given by $\rho\circ\alpha$. If $\chi$ is a central function on $G$, we write $\alpha_*\chi$ for the unique central function on $G'$ with the property that the equality
\[
(\chi'|\alpha_*\chi)_{G'}\,=\,(\alpha^*\chi'|\chi)_G
\]
holds for all central functions $\chi'$ on $G'$; in other words, the measure $(\cdot|\alpha_*\chi)_{G'}$ defined by $\alpha_*\chi$ is the pushforward of the measure $(\cdot|\chi)_G$ defined by $\chi$. If $\chi$ is the character of an $F$-rational representation $V$ of $G$, then $\alpha_*\chi$ is the character of $V\otimes_{F[G]}F[G']$, where the ring homomorphism $F[G]\rightarrow F[G']$ is induced by $\alpha$. If $\alpha$ is injective, we also write $\chi'|_G$ for $\alpha^*\chi'$ and $\Ind^{G'}_G\chi$ for $\alpha_*\chi$, and we write $[G':G]$ to denote the index of $G$ in $G'$; if $\alpha$ is surjective, we also write $\Inf^G_{G'}\chi'$ for $\alpha^*\chi'$. If $V$ is a representation of $G'$ and $\alpha$ is injective, we also write $\Res_GV$ for the restriction of $V$ to $G$ via $\alpha$.

If $R$ is a ring, we write $R^\times$ for the group of units in $R$, and for $n\in\N$ we write $\mu_n(R)$ for the group of $n$-th roots of unity in $R$. If $K$ is a field, we write $K^\alg$ for an algebraic closure of $K$ and $K^\sep$ for the separable closure of $K$ in $K^\alg$. If $L/K$ is a finite field extension, we write $[L:K]$ for its degree, and if $L/K$ is Galois, we write $\Gal(L/K):=\Aut(L/K)$. If we are given a discrete valuation on $K$, we write $\nu_K:K\rightarrow\Z\cup\{\infty\}$ for the valuation map, $\O_K$ for the ring of integers of $K$, $\m_K$ for the maximal ideal in $\O_K$ and $\kappa_K$ for the residue field of $\O_K$. By a \emph{local field}, we mean a complete discretely valued field whose residue field is \emph{perfect}. If $K$ is local and if $L/K$ is a finite separable extension, we write $\mathfrak{d}_{L/K}\subseteq\O_K$ for the discriminant ideal, $f_{L/K}=[\kappa_L:\kappa_K]$ for the inertial degree and $e_{L/K}=\length_{\O_K}(\O_L/\m_K\O_L)$ for the ramification index of $L/K$. Finally, for $n\in\N$ and $r\in\Z$, we write $r\mod n$ for the natural image of $r$ in $\Z/n\Z$.

\section{A bisection of the Artin character}\label{bartconsec}

If $K$ is a local field, if $L/K$ is a finite Galois extension with Galois group $\Gamma$ and if $(\Gamma_i)_{i\geq -1}$ is the  lower numbering ramification filtration on $\Gamma$, then the Artin character $\Ar_{L/K}:\Gamma\rightarrow\Q$ is the $\Q$-valued central function
\[
\Ar_{L/K}\,=\,\sum_{i\geq 0}\frac{1}{[\Gamma_0:\Gamma_i]}\Ind^{\Gamma}_{\Gamma_i}\textbf{\textup{u}}_{\Gamma_i}\;,
\]
cf.\ \cite{serre_localfields} Chap.\ VI \S 2. It is a $\Q$-linear combination of characters of representations of $\Gamma$ that are defined over $\Q$; in other words, it is a rational virtual character of $\Gamma$ that is $\Q$-rational. If $V$ is any representation of $\Gamma$ over a field $F$ containing $\Q$ and if $\chi_V$ denotes the character of $V$, then the Artin conductor
\[
\Ar_{L/K}(V)\,:=\,(\Ar_{L/K}|\chi_V)_\Gamma\,=\,\sum_{i\geq 0}\frac{1}{[\Gamma_0:\Gamma_i]}\dim_FV/V^{\Gamma_i}
\]
of $V$ is a measure of the ramification of $V$. In the present section, we introduce a refinement of $\Ar_{L/K}$ which provides a higher resolution at the level of tame ramification. 

Let us give an overview of the constructions and results of this section: let $n$ denote the cardinality of the tame ramification group $\Gamma_0/\Gamma_1$, and let $\mu_n$ denote the group of $n$-th roots of unity in an algebraic closure $\Q^\alg$ of $\Q$. In Definition \ref{basicbadef}, we introduce a central function $\bAr_n$ on $\mu_n$ with values in $\Q(\mu_n)$ which is a nontrivial \emph{bisection} of the augmentation character, i.e.\ which satiefies
\[
\bAr_n\,+\,\overline{\bAr_n}\,=\,\textbf{\textup{u}}_{\mu_n}\;,
\]
where $\overline{\bAr_n}$ denotes the conjugate of $\bAr_n$. We write
%\begin{eqnarray*}
\[
\Ar_{L/K}\,=\,\sum_{i\geq 0}\frac{1}{[\Gamma_0:\Gamma_i]}\Ind^{\Gamma}_{\Gamma_i}\textbf{\textup{u}}_{\Gamma_i}\,=\,\Ind^\Gamma_{\Gamma_0}\left(\textbf{\textup{u}}_{\Gamma_0}+\sum_{i\geq 1}\frac{1}{[\Gamma_0:\Gamma_i]}\Ind^{\Gamma_0}_{\Gamma_i}\textbf{\textup{u}}_{\Gamma_i}\right)\;,
\]
%\end{eqnarray*}
we further write
\[
\textbf{\textup{u}}_{\Gamma_0}\,=\,\Inf^{\Gamma_0}_{\Gamma_0/\Gamma_1}\textbf{\textup{u}}_{\Gamma_0/\Gamma_1}+\Ind^{\Gamma_0}_{\Gamma_1}\textbf{\textup{u}}_{\Gamma_1}\;,
\]
and we identify $\Gamma_0/\Gamma_1$ with $\mu_n$ by using the Galois action on a uniformizer of the maximally tame subextension of $L/K$, cf.\ Section \ref{generalramsec}. The nontrivial bisection $\bAr_n$ of $\textup{\textbf{u}}_{\mu_n}$ then yields a nontrivial bisection $\bAr_{L/K}$ of $\Ar_{L/K}$,
\[
\Ar_{L/K}\,=\,\bAr_{L/K}+\overline{\bAr_{L/K}}\;,
\]
which is a $\Q$-linear combination of $\Q(\mu_n)$-rational characters of $\Gamma$, cf.\ Definition \ref{bcconddefi}.

In Section \ref{subgrpressec}, we consider, for a prime number $p$ that is coprime to $n$, the $\Gal(\Q_p(\mu_n)/\Q_p)$-average
\[
\bAr_{L/K}^{(p)}\,=\,\frac{1}{[\Q_p(\mu_n):\Q_p]}\sum_{\sigma\in\Gal(\Q_p(\mu_n)/\Q_p)}\sigma\circ\bAr_{L/K}
\]
of $\bAr_{L/K}$, and we write $\bAr_{L/K}^{(0)}:=\bAr_{L/K}$. The $\bAr_{L/K}^{(p)}$, with $p$ prime as above or $p=0$, are clearly also bisections of the Artin character. We will show that these bisections enjoy the following properties, reminiscent of those of $(1/2)\Ar_{L/K}$:
\begin{enumerate}
\item If $M$ is a subextension of $L/K$ that is Galois over $K$ and if $\alpha$ is the corresponding surjection from $\Gamma$ onto the Galois group of $M$ over $K$, then
\[
\bAr_{M/K}\,=\,\alpha_*\bAr_{L/K}\;;
\]
cf.\ Prop.\ \ref{pushforwardprop}.
\item If $K$ has residue characteristic $p$, if $M$ is any subfield of $L$ containing $K$ and if $\Gamma'\subseteq\Gamma$ is the subgroup of automorphisms fixing $M$, then
\[
\bAr^{(p)}_{L/K}|_{\Gamma'}\,=\,f_{M/K}\bAr^{(p)}_{L/M}+\frac{1}{2}\nu_K(\mathfrak{d}_{M/K})\textbf{\textup{r}}_{\Gamma'}\;;
\]
cf.\ Prop.\ \ref{barresprop}.
\end{enumerate}
One may say that the bisections
\[
\frac{1}{2}\Ar_{L/K}\quad,\quad\bAr_{L/K}^{(p)}\quad\textup{and}\quad\bAr_{L/K}
\]
of $\Ar_{L/K}$ are ordered by increasing resolution, in the sense that one is obtained from the other by taking Galois averages. The bisection $\bAr_{L/K}$ has the advantage of providing high resolution of tame ramification, while it has the disadvantage that its restriction to subgroups may be difficult to describe: in general, the analog of statement ({\rm ii}) above does not hold for $\bAr_{L/K}$. Of course, the above three bisections coincide when paired with $\Q$-rational representations, and the last two still agree when paired with $\Q_p$-rational representations, for $p>0$.

\subsection{A bisection $\bAr_n$ of the cyclotomic augmentation character}\label{tameramsec}
Let us fix an algebraic closure $\Q^\alg$ of $\Q$. For each $n\in\N_{\geq 1}$, we let $\mu_n$ denote the group of $n$-th roots of unity in $\Q^\alg$, and for each $r\in\Z/n\Z$, we let 
\[
\chi_r:\mu_n\rightarrow \mu_n
\]
denote the homomorphism sending elements in $\mu_n$ to their $r^\textup{th}$ powers. Then 
\[
\{\chi_r\,;\,r\in\Z/n\Z\}\cong\Z/n\Z
\]
is the set of irreducible $\Q^\alg$-valued characters of the group $\mu_n$, and the conjugate $\overline{\chi}_r$ of a character $\chi_r$ is given by $\overline{\chi}_r=\chi_{-r}$.%=\chi_{n-r}$.

%\begin{remark}
%We may view $\chi_i$ as a one-dimensional (necessarily irreducible) representation of $\mu_n$, with $\zeta$ acting as multiplication by $\zeta^i$; then the character of $\chi_i$ coincides with $\chi_i$. The functions $\chi_i$ are precisely the irreducible characters of the regular representation of $\mu_n$: Indeed, $\sum_{i\in\Z/n\Z}\chi_i(1)=n$, while for any $\zeta\neq 1$ in $\mu_n$, $\sum_{i=0}^{n-1}\zeta^i=(\zeta^n-1)(\zeta-1)=0$.
%\end{remark}

\begin{defi}\label{basicbadef}
For each $n\in\N$, we let $\bA_n:\mu_n\rightarrow \Q(\mu_n)$ denote the function
\[
\bA_n\,:=\,\,\frac{1}{n}\sum_{r=0}^{n-1}r\chi_{(r\mod n)}\;.
\]
%where $\bar{r}$ denotes the class of $r\in\Z$ in $\Z/n\Z$.
\end{defi}

%By definition, the function $\bA_n$ takes values in the subfield $\Q(\mu_n)$ of $F$, and it is an element of the ring $R_{\Q(\mu_n)}(\mu_n)_\Q$ of $\Q(\mu_n)$-valued rational virtual characters of $\mu_n$ (cf.\ \cite{serre_linres} 14.1).

% Let $\textbf{1}_{\mu_n}$, $\textbf{\textup{r}}_{\mu_n}$ and $\textbf{\textup{u}}_{\mu_n}$ respectively denote the the trivial character, the regular character and the augmentation character of $\mu_n$; then 
% \[
% \textbf{1}_{\mu_n}=\chi_0\quad,\quad\textbf{\textup{r}}_{\mu_n}\,=\,\sum_{r=0}^{n-1}\chi_{\bar{r}}\quad\textup{and}\quad\textbf{\textup{u}}_{\mu_n}\,=\,\sum_{r=1}^{n-1}\chi_{\bar{r}}\;.
% \]
For $d\in\N_{\geq 1}$, let 
\[
[d]:(\Q^\alg)^\times\rightarrow (\Q^\alg)^\times
\]
be the homomorphism rising elements to their $d^\textup{th}$ powers. For each $n\in\N_{\geq 1}$, it restricts to an epimorphism 
\[
[d]\,:\,\mu_{nd}\twoheadrightarrow \mu_n\;,
\]
and $[d]^*\chi_{(r\mod n)}=\chi_{(rd\mod nd)}$ for all $r\in\Z$.

%Let us recall that if $\phi:H\rightarrow G$ is a surjective homomorphism of finite groups with kernel $N$ and if $\alpha$ is a class function on $H$, then $\phi_*\alpha$ is the class function on $G$ defined by
%\[
%(\phi_*\alpha)(g)\,=\,\frac{1}{\card N}\sum_{h\in\phi^{-1}(g)}\alpha(h)\;.
%\]
%On the other hand, if $\beta$ is a class function on $G$, then $\phi^*\beta:=\beta\circ\phi$, and the adjunction formula
%\[
%(\alpha,\phi^*\beta)\,=\,(\phi_*\alpha,\beta)
%\]
%holds (cf.\ \cite{serre_linres} Exercise 7.1). Let us note that $[d]^*\chi_i=\chi_{id}$. We begin by observing some elementary properties of the class functions $\bA_n$. In the following, we will constantly use the fact that $(\chi_i)_{i\in\Z/n\Z}$ is an orthonormal basis of the space of class functions on $\mu_n$ with respect to the pairing $(\cdot,\cdot)$, cf.\ \cite{serre_linres} Theorem 6. As before, a bar indicates the formation of the conjugate class function.
\begin{lem}\label{bacondbasiclem}
The system $(\bA_n)_{n\in\N_{\geq 1}}$ satisfies the following relations for all $n,d\in\N_{\geq 1}$:
\begin{enumerate}
\item 
\[
(\bA_n|\textbf{\textup{1}}_{\mu_n})_{\mu_n}\,=\,(\overline{\bA_n}|\textbf{\textup{1}}_{\mu_n})_{\mu_n}=0
\]
\item
\[
\bA_n+\overline{\bA_n}=\textbf{\textup{u}}_{\mu_n}
\] 
\item
\[
\bA_n=[d]_*\bA_{nd}
\] 
\item
\[
\bA_{nd}|_{\mu_n}\,=\,\bA_n\,+\,\frac{d-1}{2}\textbf{\textup{r}}_{\mu_n}
\]
\end{enumerate}
Here $\bA_{nd}|_{\mu_n}$ denotes the restriction of $\bA_{nd}$ to the subgroup $\mu_n$ of $\mu_{nd}$.
\end{lem}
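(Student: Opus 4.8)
The plan is to verify the four identities directly from the explicit formula $\bA_n = \frac{1}{n}\sum_{r=0}^{n-1} r\,\chi_{(r\bmod n)}$, expanding everything in terms of the irreducible characters $\chi_r$ and using the orthogonality relations $(\chi_r \mid \chi_s)_{\mu_n} = \delta_{r,s}$ together with $\overline{\chi_r} = \chi_{-r}$.

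For (i), since $\bA_n = \frac1n\sum_{r=0}^{n-1} r\chi_r$ has no contribution paired with $\textbf{\textup{1}}_{\mu_n} = \chi_0$ coming from $r\ge 1$, the only term that could contribute is $r=0$, which has coefficient $0$; hence $(\bA_n\mid\textbf{\textup{1}}_{\mu_n}) = 0$, and applying complex conjugation (which is $\Q$-linear on coefficients and sends $\chi_r\mapsto\chi_{-r}$, permuting the nonzero indices) gives the statement for $\overline{\bA_n}$ as well. For (ii), I would compute $\overline{\bA_n} = \frac1n\sum_{r=0}^{n-1} r\chi_{-r} = \frac1n\sum_{s=1}^{n-1}(n-s)\chi_s$ after reindexing $s = n-r$ (the $r=0$ term vanishes), so that $\bA_n + \overline{\bA_n} = \frac1n\sum_{s=1}^{n-1}\bigl(s + (n-s)\bigr)\chi_s = \sum_{s=1}^{n-1}\chi_s = \textbf{\textup{r}}_{\mu_n} - \textbf{\textup{1}}_{\mu_n} = \textbf{\textup{u}}_{\mu_n}$.

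For (iii), I would use the pushforward behavior of characters along $[d]\colon\mu_{nd}\twoheadrightarrow\mu_n$: since $[d]_*$ is adjoint to $[d]^*$ and $[d]^*\chi_{(r\bmod n)} = \chi_{(rd\bmod nd)}$, one checks that $[d]_*\chi_{(s\bmod nd)} = \chi_{(s/d\bmod n)}$ when $d\mid s$ and $0$ otherwise (the pushforward of a character is a character of the module obtained by tensoring up along $\Q[\mu_{nd}]\to\Q[\mu_n]$, which kills the characters nontrivial on $\ker[d]$). Applying this to $\bA_{nd} = \frac{1}{nd}\sum_{s=0}^{nd-1} s\,\chi_{(s\bmod nd)}$, only the indices $s = dr$ with $0\le r < n$ survive, contributing $\frac{1}{nd}\sum_{r=0}^{n-1} (dr)\chi_{(r\bmod n)} = \frac1n\sum_{r=0}^{n-1} r\chi_{(r\bmod n)} = \bA_n$. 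For (iv), I would restrict along the inclusion $\mu_n\hookrightarrow\mu_{nd}$: here $\chi_{(s\bmod nd)}|_{\mu_n} = \chi_{(s\bmod n)}$, so $\bA_{nd}|_{\mu_n} = \frac{1}{nd}\sum_{s=0}^{nd-1} s\,\chi_{(s\bmod n)}$. Writing $s = qn + r$ with $0\le q < d$, $0\le r < n$, the coefficient of $\chi_{(r\bmod n)}$ becomes $\frac{1}{nd}\sum_{q=0}^{d-1}(qn+r) = \frac{1}{nd}\bigl(n\tfrac{d(d-1)}{2} + dr\bigr) = \frac{r}{n} + \frac{d-1}{2}$, so $\bA_{nd}|_{\mu_n} = \bA_n + \frac{d-1}{2}\sum_{r=0}^{n-1}\chi_{(r\bmod n)} = \bA_n + \frac{d-1}{2}\textbf{\textup{r}}_{\mu_n}$.

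None of these steps presents a serious obstacle; the only point requiring a little care is making precise the pushforward formula $[d]_*\chi_{(s\bmod nd)}$ used in (iii) — one must argue that pushing a nontrivial character of $\ker[d]\subseteq\mu_{nd}$ forward along $[d]$ yields zero, which follows from the description of $\alpha_*$ on representations as $V\mapsto V\otimes_{F[\mu_{nd}]}F[\mu_n]$ recalled in Section \ref{notsec}, or equivalently from the adjunction $(\chi'\mid[d]_*\chi) = ([d]^*\chi'\mid\chi)$ together with the fact that $\{[d]^*\chi'\}$ ranges exactly over the characters trivial on $\ker[d]$. Everything else is bookkeeping with geometric sums and reindexing.
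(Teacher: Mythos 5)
Your proof is correct and takes essentially the same approach as the paper: all four identities are verified by routine bookkeeping with the explicit formula for $\bA_n$ and the orthogonality/adjunction properties of the characters $\chi_r$. The only stylistic difference is in ({\rm iii}), where the paper checks the identity by pairing against each $\chi_{(r\bmod n)}$ via the adjunction $([d]_*\bA_{nd}|\chi_{(r\bmod n)}) = (\bA_{nd}|[d]^*\chi_{(r\bmod n)})$, while you first record the explicit pushforward rule $[d]_*\chi_{(s\bmod nd)} = \chi_{(s/d\bmod n)}$ for $d\mid s$ (and $0$ otherwise) and then apply it termwise — two equivalent ways of doing the same computation.
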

\begin{proof}
By Definition \ref{basicbadef}, we have 
\[
(\bA_n|\chi_{(r\mod n)})_{\mu_n}\,=\,r/n\quad(*)
\]
for all integers $0\leq r\leq n-1$. In particular, $(\bA_n|\textbf{1}_{\mu_n})_{\mu_n}=(\bA_n|\chi_0)_{\mu_n}=0$, and $(\overline{\bA_n}|\textbf{1}_{\mu_n})_{\mu_n}=(\bA_n|\overline{\textbf{1}_{\mu_n}})_{\mu_n}=(\bA_n|\textbf{1}_{\mu_n})_{\mu_n}=0$, which proves the first statement. To show ({\rm ii}), it is necessary and sufficient to verify that
\[
(\bA_n+\overline{\bA_n}|\chi_{r})_{\mu_n}\,=\,\begin{cases}0&\textup{if } r=0\\1&\textup{otherwise}\end{cases}
\]
for all $r\in\Z/n\Z$. For $r=0$, this equality follows from ({\rm i}).
The equality $(*)$ implies that for all integers $r$ with $1\leq r\leq n$, 
\[
(\overline{\bA_n}|\chi_{(r\mod n)})_{\mu_n}\,=\,(\bA_n|\overline{\chi_{(r\mod n)}})_{\mu_n}\,=\,(\bA_n|\chi_{(n-r\mod n)})_{\mu_n}\,=\,(n-r)/n\;;
\] 
%and we deduce that $(\bA_n^*,\textbf{1}_{\mu_n})=(\bA_n^*,\chi_n)=0$
so for $1\leq r\leq n-1$ we have
\[
(\bA_n+\overline{\bA_n}|\chi_{(r\mod n)})_{\mu_n}\,=\,\frac{1}{n}(r+(n-r))=1\;,
\]
as desired. Statement ({\rm iii}) follows from the equalities
\begin{eqnarray*}
([d]_*\bA_{nd}|\chi_{(r\mod n)})_{\mu_n}&=&(\bA_{nd}|[d]^*\chi_{(r\mod n)})_{\mu_{nd}}\\
&=&(\bA_{nd}|\chi_{(rd\mod nd)})_{\mu_{nd}}\\
&=&(rd)/(nd)\,=\,r/n
\end{eqnarray*}
for $0\leq r\leq n-1$. Finally, statement ({\rm iv}) holds because
\[
\bA_{nd}\,=\,\frac{1}{nd}\sum_{r=0}^{d-1}\sum_{s=0}^{n-1}(rn+s)\chi_{(rn+s\mod nd)}
\]
and because $\chi_{(rn+s\mod nd)}|_{\mu_n}=\chi_{(s\mod n)}$. Indeed, it follows that
\[
\bA_{nd}|_{\mu_n}\,=\,\left(\frac{1}{nd}\sum_{s=0}^{n-1}ds\chi_{(s\mod n)}\right)+\left(\frac{1}{nd}\sum_{r=0}^{d-1}rn\sum_{s=0}^{n-1}\chi_{(s\mod n)}\right)\;,
\]
where the first summand is $\bA_n$ and where the second summand is
\[
\frac{d-1}{2}\textbf{\textup{r}}_{\mu_n}\;,
\]
as desired.
\end{proof}

\begin{remark}
Lemma \ref{bacondbasiclem} ({\rm iii}) shows that the collection $(\bA_n)_{n\in\N_{\geq 1}}$ defines a distribution on the set of locally constant $\Q^\alg$-valued functions on 
\[
\hat{\Z}(1)=\plim_n\mu_n\;,
\]
where the above projective limit is taken over the surjective transition homomorphisms $[d]$. 

\end{remark}

\subsection{A bisection $\bAr_{L/K}$ of the Artin character}\label{generalramsec}
Let $K$ be a local field, let $L/K$ be a finite Galois extension, and let $(\Gamma_i)_{i\geq -1}$ be the lower numbering ramification filtration on $\Gamma=\Gal(L/K)$, cf.\ \cite{serre_localfields} Chap.\ IV. If $\pi_L$ is any uniformizer of $L$, then the wild ramification subgroup $\Gamma_1$ is, by definition, the kernel of the homomorphism
\[
\Gamma_0\rightarrow k_L^\times\quad,\quad\sigma\mapsto\frac{\sigma(\pi_L)}{\pi_L}\;\mod \m_L
\]
which is independent of the choice of $\pi_L$ (cf.\ \cite{serre_localfields} Chap.\ IV \S 2 Prop.\ 7) and which induces an isomorphism from $\Gamma_0/\Gamma_1$ onto a finite subgroup of $k_L^\times$. Let $n$ denote the cardinality of $\Gamma_0/\Gamma_1$; then $n$ is prime to the characteristic of $\kappa_L$, and the image of the above homomorphism is given by $\mu_n(\kappa_L)$. 

We will consider an isomorphism from $\Gamma_0/\Gamma_1$ onto $\mu_n(\kappa_L)$ that is, in general, different from the one above. To this end, let us consider the extensions
\[
K\subseteq K^\ur\subseteq K^t\subseteq L\;,
\]
where $K^\ur/K$ is the biggest unramified subextension of $L/K$ and where $K^t/K$ is the biggest tamely ramified subextension of $L/K$. Then $K^\ur=L^{\Gamma_0}$, $K^t=L^{\Gamma_1}$, and $\Gamma_0/\Gamma_1$ is naturally identified with $\Gal(K^t/K^\ur)$. Let $\pi_t$ be any uniformizer of $K^t$; then the homomorphism
\[
\Gamma_0\rightarrow k_L^\times\quad,\quad\sigma\mapsto\frac{\sigma(\pi_t)}{\pi_t}\;\mod \m_{K^t}
\]
is again independent of the choice of $\pi_t$ (cf.\ [ibid.]), and it defines an isomorphism $\Psi'_{L/K}$ of $\Gamma_0/\Gamma_1$ onto $\mu_n(\kappa_{K^t})=\mu_n(\kappa_L)$.

We fix, once and for all, an algebraic closure $K^\alg$ of $K$; by algebraic extensions of $K$ we will implicitly mean subextensions of $K^\alg/K$. If $K$ has characteristic zero, we fix an embedding of $\Q^\alg$ into $K^\alg$; otherwise, we choose an embedding of $\Q^\alg$ into an algebraic closure of the fraction field of the ring of Witt vectors of $K$. The Teichmüller homomorphism then yields an identification of $\mu_n(\kappa_L)$ with $\mu_n=\mu_n(\Q^\alg)$. Let $\Psi_{L/K}$ denote the resulting isomorphism 
\[
\Psi_{L/K}\,:\,\Gamma_0/\Gamma_1\overset{\sim}{\longrightarrow}\mu_n
\]
that is obtained from $\Psi'_{L/K}$. Clearly $\Psi_{L/K}=\Psi_{L/K^\ur}=\Psi_{K^t/K}=\Psi_{K^t/K^\ur}$.

\begin{defi}\label{bcconddefi}
In the above situation, we define
\[
\bAr_{L/K}^t\,:=\,\Psi_{L/K}^*\bA_n\;,
\]
and we set
\[
\bAr_{L/K}\,:=\,\Ind^{\Gamma}_{\Gamma_0}\left(\Inf^{\Gamma_0}_{\Gamma_0/\Gamma_1}\bA_{L/K}^t\,+\,\frac{1}{2}\Ind^{\Gamma_0}_{\Gamma_1}\textbf{\textup{u}}_{\Gamma_1}+\frac{1}{2}\sum_{i\geq 1}\frac{1}{[\Gamma_0:\Gamma_i]}\Ind^{\Gamma_0}_{\Gamma_i}\textbf{\textup{u}}_{\Gamma_i}\right)\;;
\]
this is a rational virtual character $\bAr_{L/K}$ of $\Gamma$ that is defined over $\Q(\mu_n)$, and it is called the refined Artin character of $L/K$.
\end{defi}

\begin{remark}Let us note the following basic properties of $\bAr_{L/K}$:
\begin{enumerate}
\item By Lemma \ref{bacondbasiclem} ({\rm ii}), we have
\[
\bAr_{L/K}+\overline{\bAr_{L/K}}=\Ar_{L/K}\;,
\]
so $\bAr_{L/K}$ is a bisection of $\Ar_{L/K}$. 
\item By definition, we have canonical identifications
\[
\bAr^t_{L/K}\,=\,\bAr^t_{L/K^\ur}\,=\,\bAr^t_{K^t/K}\,=\,\bAr^t_{K^t/K^\ur}\;.
\]
\item Moreover, it is clear from the definitions that if $\iota$ is the inclusion of $\Gamma_0$ into $\Gamma$, then
\[
\bAr_{L/K}\,=\,\iota_*\bAr_{L/K^\ur}\;.
\]
\end{enumerate}
\end{remark}
\subsection{Expression of $\bAr_{L/K}$ in terms of the upper numbering ramification filtration}\label{uppernumbsec}

In order to study the behavior of $\bAr_{L/K}$ with respect to pushforward to quotients of $\Gamma$, it is convenient to express $\bAr_{L/K}$ in terms of the upper numbering ramification filtration 
\[
(\Gamma^i)_{i\geq -1}
\]
on $\Gamma$. Let $g_0$ denote the cardinality of $\Gamma_0$, that is the ramification index of $L/K$.%, and 

\begin{lem}\label{uppernumberinglem}
We have
\[
\bA_{L/K}\,=\,\Ind^\Gamma_{\Gamma^0}\left(\Inf^{\Gamma^0}_{\Gamma^0/\Gamma^{1/g_0}}(\bA_{L/K}^t)\,+\,\frac{1}{2}\Ind^{\Gamma^0}_{\Gamma^{1/g_0}}\textup{\textbf{u}}_{\Gamma^{1/g_0}}\,+\,\frac{1}{2g_0}\sum_{i\geq 1}\Ind^{\Gamma^0}_{\Gamma^{i/g_0}}\textbf{\textup{u}}_{\Gamma^{i/g_0}}\right)\;.
\]
\end{lem}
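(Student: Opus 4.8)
The plan is to translate the definition of $\bAr_{L/K}$ (Definition \ref{bcconddefi}), which is written in terms of the lower numbering filtration $(\Gamma_i)_{i\geq -1}$ of $\Gamma$, into the upper numbering filtration $(\Gamma^i)_{i\geq -1}$ by applying the Herbrand function $\psi_{L/K}$. Recall that by definition $\Gamma^{\psi_{L/K}(i)} = \Gamma_i$, so the three ingredients inside the induction from $\Gamma_0$ in Definition \ref{bcconddefi} get reindexed as follows. First, $\Gamma_0 = \Gamma^0$ and $\Gamma_1 = \Gamma^{\psi_{L/K}(1)}$; since the first jump of the Herbrand function below which it is the identity occurs at $0$ and the slope on $[0,1]$ is $1/[\Gamma_0:\Gamma_1]$... but more precisely, on the interval $[0,1]$ the lower-to-upper conversion gives $\psi_{L/K}(1) = 1/g_0 \cdot |\Gamma_1| \cdot$(something) — I would instead use the standard fact that over $K^\ur$ the extension has ramification index $g_0 = |\Gamma_0|$, and the Herbrand function $\psi_{L/K^\ur}$ satisfies $\psi(i) = i/e$ for small $i$ only after rescaling; the cleanest route is to note $\Gamma_1 = \Gamma^{1/g_0}$ when $\Gamma_0/\Gamma_1$ has order $n$ and... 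Here I must be careful: the correct identity is that $\Gamma_1$ in lower numbering equals $\Gamma^{v}$ where $v = \varphi_{L/K}(1)$, and since $[\Gamma_0:\Gamma_1] = n$ while $|\Gamma_0| = g_0$, one has $\varphi_{L/K}(1) = 1/n$ in the totally ramified case — but over $K$ versus over $K^\ur$ the value differs. I would resolve this by first reducing to the case $K = K^\ur$ using Remark parts (ii) and (iii) after Definition \ref{bcconddefi}, so that $\Gamma = \Gamma_0$ has order $g_0$ and $\bAr_{L/K} = \iota_*\bAr_{L/K^\ur}$; then $\Gamma^0 = \Gamma$ and the outer induction $\Ind^\Gamma_{\Gamma^0}$ is trivial, reducing the whole claim to the totally ramified case where the computation is transparent.

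So the key steps, in order, are: (1) reduce to $K = K^\ur$ by invoking the canonical identifications $\bAr^t_{L/K} = \bAr^t_{L/K^\ur}$ and $\bAr_{L/K} = \iota_*\bAr_{L/K^\ur}$ from the Remark following Definition \ref{bcconddefi}, together with the analogous (trivial) reduction for the right-hand side — note $\Gamma^0 = \Gamma_0$ and induction from $\Gamma_0$ commutes with $\iota_*$; (2) in the totally ramified case, identify $\Gamma_1 = \Gamma^{1/g_0}$, using that $\varphi_{L/K}(u) = u/g_0$ for $u \in [0,1]$ (since $\Gamma_u = \Gamma_1$ for $0 < u \le 1$... actually $\Gamma_u = \Gamma_0$ for $0 \le u \le \lceil$ of the first nonzero jump, so I should instead say: the first jump in the lower numbering after $\Gamma_0$ is at some integer $\ge 1$, and $\varphi_{L/K}$ has slope $|\Gamma_1|/|\Gamma_0| = 1/n$ on $[0, j_1]$ where $j_1$ is the first jump; thus more care is needed — but the upshot, using $\psi_{L/K}(i/g_0)$ versus lower index $i$, is exactly the reindexing that turns $\frac{1}{[\Gamma_0:\Gamma_i]}$ into $\frac{1}{g_0}$ together with a relabeling of the summation index); (3) match term by term: the inflation term $\Inf^{\Gamma_0}_{\Gamma_0/\Gamma_1}\bAr^t_{L/K}$ becomes $\Inf^{\Gamma^0}_{\Gamma^0/\Gamma^{1/g_0}}\bAr^t_{L/K}$; the term $\frac12\Ind^{\Gamma_0}_{\Gamma_1}\textbf{\textup{u}}_{\Gamma_1}$ becomes $\frac12\Ind^{\Gamma^0}_{\Gamma^{1/g_0}}\textbf{\textup{u}}_{\Gamma^{1/g_0}}$; and the tail $\frac12\sum_{i\ge1}\frac{1}{[\Gamma_0:\Gamma_i]}\Ind^{\Gamma_0}_{\Gamma_i}\textbf{\textup{u}}_{\Gamma_i}$ becomes $\frac{1}{2g_0}\sum_{i\ge1}\Ind^{\Gamma^0}_{\Gamma^{i/g_0}}\textbf{\textup{u}}_{\Gamma^{i/g_0}}$. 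The last is the substantive point: it is exactly the upper-numbering rewriting of the Artin conductor's wild part, which is standard (cf.\ \cite{serre_localfields} Chap.\ VI) — the identity $\sum_{i\ge1}\frac{1}{[\Gamma_0:\Gamma_i]}\Ind\textbf{\textup{u}}_{\Gamma_i} = \frac{1}{g_0}\sum_{i\ge1}\Ind\textbf{\textup{u}}_{\Gamma^{i/g_0}}$ follows from the change of variables $i \mapsto \psi_{L/K^\ur}(i)$ in lower numbering combined with $\psi'_{L/K^\ur}(t) = [\Gamma_0 : \Gamma_{\lceil t\rceil}]$, so that counting lattice points with the right multiplicities converts the two sums into each other.

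The main obstacle I anticipate is getting the indexing of the Herbrand function exactly right: one must verify that, over $K^\ur$ where the extension is totally ramified of degree $g_0$, the value $\psi_{L/K^\ur}(i/g_0)$ hits precisely the lower-numbering breaks in the intended way, and that the density factor $\psi'_{L/K^\ur}$ accounts for the coefficients $\frac{1}{[\Gamma_0:\Gamma_i]}$ collapsing to the uniform $\frac{1}{g_0}$ after rescaling the summation variable by $g_0$. Concretely this is the identity $\int$ or discrete sum manipulation $\sum_{i \ge 1}\frac{1}{[\Gamma_0 : \Gamma_i]}(\textbf{\textup{r}}_{\Gamma_0} - \textbf{\textup{r}}_{\Gamma_0/\Gamma_i}^{\mathrm{infl}}) = \frac{1}{g_0}\sum_{j \ge 1}(\textbf{\textup{r}}_{\Gamma_0} - \ldots)$ at the level of the upper filtration; it is a routine but error-prone bookkeeping exercise, and I would double-check it against the well-known upper-numbering formula for $\Ar_{L/K}$ itself, $\Ar_{L/K} = \sum_{u \ge 0}\frac{1}{[\Gamma^0 : \Gamma^u]}\,\mathbf{?}$... — since $\bAr$ differs from $\frac12\Ar$ only in the tame ($\Gamma_0/\Gamma_1$) part, and the tame part is handled separately via $\bAr^t$, the wild tails of $\bAr_{L/K}$ and $\frac12\Ar_{L/K}$ are literally equal, so the desired rewriting for $\bAr$ is immediate once one knows it for $\Ar$. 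Everything else is formal manipulation with $\Ind$, $\Inf$, and the reduction to $K^\ur$.
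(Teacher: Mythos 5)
Your plan is essentially the paper's proof. The substantive step is converting the wild tail $\sum_{i\ge 1}\frac{1}{[\Gamma_0:\Gamma_i]}\Ind^{\Gamma_0}_{\Gamma_i}\textup{\textbf{u}}_{\Gamma_i}$ to upper numbering via the Herbrand change of variable — the paper writes the discrete sum as an integral, substitutes $\textup{\textbf{v}}_{r/g_0}=\textup{\textbf{u}}_{\psi(r/g_0)}$, and applies the chain rule to produce the density $1/\psi'(\phi(s))=1/[\Gamma_0:\Gamma_i]$ on $(i-1,i)$ — together with the boundary identifications $\Gamma^0=\Gamma_0$ and $\Gamma^{1/g_0}=\Gamma_1$, exactly as you indicate; your preliminary reduction to $K=K^{\mathrm{ur}}$ is implicit in the paper, since both sides of the lemma carry the same outer $\Ind^\Gamma_{\Gamma_0}$.
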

\begin{proof}
Let us compute the right hand side of the desired equality. For any real number $r\geq0$, let us set
\[
\textup{\textbf{v}}_r\;:=\;\Ind^{\Gamma^0}_{\Gamma^r}\textup{\textbf{u}}_{\Gamma^r}\quad\textup{and}\quad\textup{\textbf{u}}_r\;:=\;\Ind^{\Gamma_0}_{\Gamma_r}\textup{\textbf{u}}_{\Gamma_r}\;.
\]
 The upper numbering filtration does not jump in open intervals of the form $(i/g_0,(i+1)/g_0)$ where $i\geq 0$ is an integer; hence, for any $\gamma\in\Gamma$, the function $\textbf{\textup{v}}_{r/g_0}(\gamma)$ of $r$ is constant on the open intervals of the form $(i,i+1)$, with $i\in\Z_{\geq 1}$. Let $\psi$ denote the Herbrand function of \cite{serre_localfields} Chap.\ IV \S 3; then $\textbf{\textup{v}}_{r}=\textbf{\textup{u}}_{\psi(r)}$ for any $r\in\R_{\geq -1}$. By what we just said, we can rewrite the sum 
\[
\frac{1}{g_0}\sum_{i\geq 1}\textbf{\textup{v}}_{i/g_0}(\gamma)
\]
as an integral
\[
\frac{1}{g_0}\int_{r\geq 0}\textbf{\textup{v}}_{r/g_0}(\gamma)\,dr\,=\,\frac{1}{g_0}\int_{r\geq 0}\textbf{\textup{u}}_{\psi(r/g_0)}(\gamma)\,dr\;,
\]
and by the chain rule this integral equals
\[
\frac{1}{g_0}\int_{s\geq 0} \textbf{\textup{u}}_s(\gamma)\frac{g_0}{\psi'(\phi(s))}\,ds\,=\,\int_{s\geq 0} \frac{\textbf{\textup{u}}_s(\gamma)}{\psi'(\phi(s))}\;,
\]
where $\psi'$ denotes the derivative of $\psi$. The lower numbering ramification filtration does not jump on open intervals $(i,i+1)$ with $i\in\Z_{\geq 0}$, and for $i\geq 1$ and $s\in (i-1,i)$, we have
\[
\psi'(\phi(s))=[\Gamma_0:\Gamma_i]\;;
\]
it follows that
\[
\int_{s\geq 0} \frac{\textbf{\textup{u}}_s(\gamma)}{\psi'(\phi(s))}\,=\,\sum_{i\geq 1}\frac{1}{[\Gamma_0:\Gamma_i]}\textup{\textbf{u}}_i(\gamma)\;.
\]
The desired equality now follows from the fact that 
\[
\Gamma^0=\Gamma_0\quad\textup{and}\quad\Gamma^{1/g_0}\,=\,\Gamma_1\;.
\]
\end{proof}

\subsection{Behavior of $\bAr_{L/K}$ with respect to pushforward to quotients}\label{quotsec}

We now show that the refined Artin characters of finite Galois extensions of $K$ define a distribution on the space of locally constant central functions on the absolute Galois group of $K$. Let us first note the following elementary lemma:

\begin{lem}\label{pushforwardelementarylem}
Let $G$ be a finite group, let $M,N\subseteq G$ be normal subgroups, let
\[
\xymatrix{
G\ar[r]^\alpha\ar[d]^\gamma&G/M\ar[d]^\beta\\
G/N\ar[r]^\delta&G/(MN)
}
\]
% aMN = MaN = MNa
% MN=NM, denn mn gegeben, schreibe als n'm...
be the resulting canonical commutative diagrams of surjective homomorphisms, and let $\chi$ be a central function on $G/M$ with values in some field of characteristic zero; then
\[
\gamma_*\alpha^*\chi\,=\,\delta^*\beta_*\chi\;.
\]
\end{lem}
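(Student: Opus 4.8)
The statement is purely about finite groups and central functions, so the natural approach is to test both sides against an arbitrary central function on $G/(MN)$ using the defining adjointness property of pushforward. Write $\lambda$ for a central function on $G/(MN)$ with values in the given field $F$ of characteristic zero. By definition of the pushforward (see Section \ref{notsec}), it suffices to check
\[
(\lambda\mid\gamma_*\alpha^*\chi)_{G/N}\,=\,(\lambda\mid\delta^*\beta_*\chi)_{G/N}
\]
for all such $\lambda$, since the pairing $(\cdot\mid\cdot)_{G/N}$ is perfect on $C(G/N,F)$ and both sides are central functions on $G/N$.

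\emph{Key steps.} First I would expand the left-hand side using the adjointness of $\gamma_*$ and $\gamma^*$, getting
\[
(\lambda\mid\gamma_*\alpha^*\chi)_{G/N}\,=\,(\gamma^*\lambda\mid\alpha^*\chi)_{G}\,=\,(\alpha_*\gamma^*\lambda\mid\chi)_{G/M}\,,
\]
where the second equality is the adjointness of $\alpha_*$ and $\alpha^*$. Next I would expand the right-hand side the same way:
\[
(\lambda\mid\delta^*\beta_*\chi)_{G/N}\,=\,(\delta^*\lambda\mid\beta_*\chi)_{G/N}\;?
\]
— this is not quite right as written, so the cleaner route is to again use $\delta_*\dashv\delta^*$ in the other direction; actually the symmetric move is: $(\lambda\mid\delta^*\beta_*\chi)_{G/N}=(\delta_*\lambda\mid\beta_*\chi)_{G/(MN)}$ is false as well. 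The correct identity to use is $(\lambda\mid\delta^*\mu)_{G/N}=(\delta_*\lambda\mid\mu)_{G/(MN)}$ for $\mu=\beta_*\chi$? No — $\delta^*$ is restriction along a surjection, whose adjoint $\delta_*$ goes the right way, so indeed $(\lambda\mid\delta^*\beta_*\chi)_{G/N}$ should instead be rewritten by bringing everything over to $C(G/M,F)$: we have $\beta_*\dashv\beta^*$, hence I instead pair against the other slot. Concretely, the robust computation is to reduce \emph{both} sides to a pairing on $C(G/M,F)$ against $\chi$, so for the right-hand side I would write, using $\beta_*\dashv\beta^*$,
\[
(\lambda\mid\delta^*\beta_*\chi)_{G/N}\,=\,\overline{(\delta^*\beta_*\chi\mid\lambda)_{G/N}}\,,
\]
and then manipulate $\delta^*\beta_*$; but the genuinely clean path avoids conjugates entirely by instead computing $\beta_*$ via $\beta\circ\alpha=\delta\circ\gamma$ directly. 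So the cleanest argument is: reduce the whole identity to the assertion $\alpha_*\gamma^*=\beta^*\delta_*$ as maps $C(G/N,F)\to C(G/M,F)$ — wait, that is a different adjoint statement.

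Let me state the actual plan more carefully. Taking adjoints, $\gamma_*\alpha^*\chi=\delta^*\beta_*\chi$ is equivalent, via the perfect pairing on $C(G/N,F)$ and the adjunctions $\gamma_*\dashv\gamma^*$, $\delta_*\dashv\delta^*$, to the identity
\[
(\alpha^*\chi\mid\gamma^*\lambda)_{G}\,=\,(\beta_*\chi\mid\delta_*\lambda)_{G/(MN)}
\]
for all $\lambda\in C(G/N,F)$. Now apply $\alpha_*\dashv\alpha^*$ to the left side and $\beta_*\dashv\beta^*$ to the right side, turning this into
\[
(\chi\mid\alpha_*\gamma^*\lambda)_{G/M}\,=\,(\chi\mid\beta^*\delta_*\lambda)_{G/M}\,,
\]
which holds for all $\chi$ iff $\alpha_*\gamma^*\lambda=\beta^*\delta_*\lambda$. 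Thus the lemma reduces to the ``base change'' identity $\alpha_*\gamma^*=\beta^*\delta_*$ on central functions, for the Cartesian-type square of surjections. This last identity I would prove by an explicit averaging computation: for $\bar g\in G/M$ with image $\bar{\bar g}\in G/(MN)$, $(\alpha_*\gamma^*\lambda)(\bar g)$ is, up to the standard normalization, a sum of $\lambda(\gamma(x))$ over the fibre $\alpha^{-1}(\bar g)$ reweighted by centralizer sizes, and because $N\subseteq \ker$ direction makes $\gamma$ factor through $\delta\circ\alpha$, this collapses exactly to $(\beta^*\delta_*\lambda)(\bar g)=(\delta_*\lambda)(\bar{\bar g})$; here one uses that the square induces a bijection $\alpha^{-1}(\bar g)/(\text{conj})\to \delta^{-1}(\bar{\bar g})/(\text{conj})$ compatibly with the measures, which is exactly the statement that pushing a measure forward along $G\to G/N$ and then $G/N\to G/(MN)$ equals pushing along $G\to G/M\to G/(MN)$.

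\emph{Main obstacle.} The content is entirely bookkeeping with the adjunctions, and the only real subtlety is getting the variance and the normalization constants in the pairing $(\cdot\mid\cdot)$ right when $\lambda$ is $F$-valued rather than $\C$-valued (so that $\overline{\lambda}$ means $g\mapsto\lambda(g^{-1})$, not complex conjugation); one must make sure the identity $(\alpha^*\chi'\mid\chi)_G=(\chi'\mid\alpha_*\chi)_{G'}$ is applied in the correct slot each time, since the pairing is not symmetric but satisfies $(\chi_1\mid\chi_2)=\overline{(\chi_2\mid\overline{\overline{\chi_1}})}$-type relations. Alternatively — and this is probably the shortest write-up — one observes that on the level of \emph{measures} the statement is the trivial transitivity/compatibility of pushforward of measures along the commuting square $\beta\circ\alpha=\delta\circ\gamma$ together with the fact that $\gamma^*$, $\alpha^*$ correspond to pulling back functions; since $\alpha_*$ is defined precisely so that $(\cdot\mid\alpha_*\chi)$ is the pushforward measure of $(\cdot\mid\chi)$, the equality $\gamma_*\alpha^*\chi=\delta^*\beta_*\chi$ is immediate once one checks that $\delta^*\beta_*\chi$ represents the measure obtained by first forming $(\cdot\mid\alpha^*\chi)$ and pushing along $\gamma$. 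I would present the adjunction computation as the formal proof and remark that it is the measure-theoretic transitivity in disguise.
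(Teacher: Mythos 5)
Your adjunction manipulation is circular. After two rounds of the adjunction $(\chi'\mid\alpha_*\chi)_{G'}=(\alpha^*\chi'\mid\chi)_G$ (plus the symmetry of the pairing), you reduce the claim $\gamma_*\alpha^*=\delta^*\beta_*$ on $C(G/M,F)$ to the claim $\alpha_*\gamma^*=\beta^*\delta_*$ on $C(G/N,F)$. But the square is symmetric in $M$ and $N$: swapping them interchanges $\alpha\leftrightarrow\gamma$ and $\beta\leftrightarrow\delta$, so your ``base change identity'' is literally the original lemma with the roles of $M$ and $N$ exchanged. The adjunction bookkeeping therefore proves nothing; it only shows the two formulations are equivalent, and you still have to establish one of them from scratch.

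The part of your proposal that would actually carry the content --- the explicit averaging computation --- is only gestured at, and the gestures are off in two places. First, you speak of fibres ``reweighted by centralizer sizes''; for pushforward along a \emph{surjection} $\alpha$ the formula is simply $(\alpha_*\chi)(c)=\frac{1}{|\ker\alpha|}\sum_{\alpha(a)=c}\chi(a)$, with no centralizer weights (those arise for induction from a subgroup, not for quotient maps). Second, the ``bijection $\alpha^{-1}(\bar g)/(\mathrm{conj})\to\delta^{-1}(\bar{\bar g})/(\mathrm{conj})$'' is not the right mechanism. What makes the computation close is elementary coset counting: for $g\in G$ one has
\[
(\gamma_*\alpha^*\chi)(g\bmod N)=\frac{1}{|N|}\sum_{h\in N}\chi(gh\bmod M),\qquad
(\delta^*\beta_*\chi)(g\bmod N)=\frac{1}{[MN:M]}\sum_{\bar h\in MN/M}\chi(g\bar h\bmod M),
\]
and these agree because the natural map $N\to MN/M$ is surjective with kernel $M\cap N$ (second isomorphism theorem), the summand $\chi(gh\bmod M)$ depends only on the image of $h$ in $MN/M$, and $|M\cap N|\cdot|MN/M|=|N|$. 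That is the paper's proof, and it is the irreducible content; your proposal needs to actually carry out this count rather than appeal to a conjugacy-orbit bijection and centralizer weights that don't enter here.
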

\begin{proof}
Let $c$ be an element of $G/N$; then
\[
(\delta^*\beta_*\chi)(c)\,=\,\frac{1}{|\ker\beta|}\sum_{\beta(b)=\delta(c)}\chi(b)\;,
\]
while
\[
(\gamma_*\alpha^*\chi)(c)\,=\,\frac{1}{|\ker\gamma|}\sum_{\gamma(a)=c}\chi(\alpha(a))\;.
\]
In other words, let $g$ be an element of $G$; then the first line yields
\[
(\delta^*\beta_*\chi)(g\mod N)\,=\,\frac{1}{[MN:M]}\sum_{h\in MN/M}\chi((g+h)\mod M)\;,
\]
while the second line yields
\[
(\gamma_*\alpha^*\chi)(g\mod N)\,=\,\frac{1}{|N|}\sum_{h\in N}\chi((g+h)\mod M)\;.
\]
We now see that the two expressions are equal: Indeed, we have a natural homomorphism
\[
N\hookrightarrow MN\rightarrow MN/M
\]
which is surjective and whose kernel is $M\cap N$. Clearly the value of $\chi((g+h)\mod M)$, for $h\in N$, only depends on the image of $h$ in $MN/M$, and $|M\cap N|\cdot|MN/M|=|N|$, which shows the desired statement. 
\end{proof}

\begin{prop}\label{pushforwardprop}
Let $M$ be a subfield of $L$ containing $K$ such that $M/K$ is Galois, and let $\alpha$ denote the natural surjection from $\Gamma$ onto the Galois group of $M$ over $K$; then
\[
\alpha_*\bA_{L/K}\,=\,\bA_{M/K}\;.
\]
\end{prop}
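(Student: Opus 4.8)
The plan is to reduce the identity $\alpha_*\bAr_{L/K} = \bAr_{M/K}$ to compatibilities among the three ingredients making up the refined Artin character: the tame part $\bAr^t$, the wild part $\tfrac12\Ind\,\textbf{u}_{\Gamma_1}$, and the higher part $\tfrac12\sum_{i\geq1}\tfrac1{[\Gamma_0:\Gamma_i]}\Ind\,\textbf{u}_{\Gamma_i}$. The key formalism is Lemma \ref{pushforwardelementarylem}, which says pushforward and pullback of central functions commute around a square of quotient maps; I would use it repeatedly, together with the standard facts (see \cite{serre_localfields} Chap.\ IV) that the ramification subgroups behave well under quotients: if $N = \Gal(L/M) \trianglelefteq \Gamma$ and $\bar\Gamma = \Gamma/N = \Gal(M/K)$, then $\bar\Gamma_0 = \Gamma_0 N/N$ (so the image of $\Gamma_0$), $\bar\Gamma_i$ in the \emph{upper} numbering is $\Gamma^i N/N$, and inflation/induction interact with $\alpha$ in the evident way. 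Because the clean functoriality statement is for the \emph{upper} numbering (Herbrand's theorem: $(\Gamma/N)^v = \Gamma^v N/N$), I would work with the expression for $\bAr_{L/K}$ from Lemma \ref{uppernumberinglem} rather than the defining one, since that is where pushforward to quotients is transparent.

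First I would set up notation: write $N = \Gal(L/M)$, $\bar\Gamma = \Gamma/N$, and let $g_0 = |\Gamma_0|$, $\bar g_0 = |\bar\Gamma_0|$ be the two ramification indices, so $g_0 = \bar g_0 \cdot |\Gamma_0 \cap N| = \bar g_0 \cdot e_{L/M}$. Applying $\alpha_*$ to the formula in Lemma \ref{uppernumberinglem} and using transitivity of induction ($\alpha_* \Ind^\Gamma_{\Gamma^0} = \Ind^{\bar\Gamma}_{\bar\Gamma^0} \circ (\alpha|_{\Gamma^0})_*$, where here $\alpha|_{\Gamma^0}: \Gamma^0 \to \bar\Gamma^0$ is the induced surjection with kernel $\Gamma^0 \cap N$), the problem becomes a term-by-term comparison inside $\bar\Gamma^0 = \bar\Gamma_0$. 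For the tame term I would invoke Lemma \ref{pushforwardelementarylem} with $G = \Gamma^0$, the subgroups being $\Gamma^{1/g_0} = \Gamma_1$ and $\Gamma^0 \cap N$, to move $\alpha_*$ past the inflation $\Inf^{\Gamma^0}_{\Gamma^0/\Gamma^{1/g_0}}$; it then remains to see that $\bAr^t_{L/K}$ pushes forward to $\bAr^t_{M/K}$ under the map $\Gamma_0/\Gamma_1 \to \bar\Gamma_0/\bar\Gamma_1$. Unwinding the definition $\bAr^t = \Psi^*\bA_n$ and checking that this quotient map is, under the identifications $\Psi$, exactly the power map $[d]: \mu_{n} \to \mu_{\bar n}$ with $d = n/\bar n = e(K^t/M^t)$ — which follows from the compatibility of the uniformizer-quotient construction of $\Psi$ with passage to a subfield — this is precisely Lemma \ref{bacondbasiclem}(iii): $[d]_* \bA_n = \bA_{\bar n}$.

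For the wild and higher terms the content is the classical behavior of the Artin character under quotients, namely $\alpha_* \Ar_{L/K} = \Ar_{M/K}$ together with its refinement at each ``level'' of the upper-numbering filtration; concretely, I would show $(\alpha|_{\Gamma^0})_*\bigl(\Ind^{\Gamma^0}_{\Gamma^r}\textbf{u}_{\Gamma^r}\bigr)$ equals the corresponding term for $\bar\Gamma$ plus a correction supported on the trivial character, by applying Lemma \ref{pushforwardelementarylem} to push $\alpha_*$ past the inflation implicit in $\textbf{u}_{\Gamma^r} = \Inf\,\textbf{u}_{\Gamma^r/(\Gamma^r\cap N)} + \Ind\,\textbf{u}_{\Gamma^r\cap N}$, then using that $\Ind^{\Gamma^0}_{H}\Ind^H_{H'}\textbf{u}_{H'}$ telescopes. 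The bookkeeping with the normalization constants ($\tfrac12$, $\tfrac1{g_0}$ versus $\tfrac1{\bar g_0}$, and the integral/Herbrand change of variables already packaged in Lemma \ref{uppernumberinglem}) has to come out so that the ``$\Ind\,\textbf{u}$'' parts assemble into $\bAr_{M/K}$ with its own $\bar g_0$ in the denominator. I expect the main obstacle to be exactly this accounting: verifying that under $\alpha_*$ the upper-numbering-indexed family $(\Gamma^r)$ for $L/K$ maps onto the family $(\bar\Gamma^r)$ for $M/K$ with the right multiplicities — equivalently, redoing the integral computation of Lemma \ref{uppernumberinglem} ``downstairs'' and matching it to the pushforward of the one ``upstairs'' — and keeping the inflation/induction juggling from Lemma \ref{pushforwardelementarylem} straight through all three terms simultaneously. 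Once the tame term is handled via Lemma \ref{bacondbasiclem}(iii) and the remaining terms via the known quotient-compatibility of $\tfrac12\Ar$, adding $\bAr_{M/K} + \overline{\bAr_{M/K}}$ as a consistency check against $\alpha_*\Ar_{L/K} = \Ar_{M/K}$ confirms there is no sign or normalization error.
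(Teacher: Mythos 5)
Your approach matches the paper's: reduce via the upper-numbering expression of Lemma \ref{uppernumberinglem}, identify the induced map $\Gamma_0/\Gamma_1\to\Gamma_0'/\Gamma_1'$ with $[d]:\mu_n\to\mu_{n'}$ under $\Psi$ (via the uniformizer compatibility from \cite{lang_ant}), apply Lemma \ref{bacondbasiclem}(iii) together with Lemma \ref{pushforwardelementarylem} for the tame term, and handle the wild terms via Herbrand's theorem plus the $g$-versus-$g'$ reindexing. One simplification over what you propose: no ``correction supported on the trivial character'' arises for the wild terms --- the direct pairing computation $(\textup{\textbf{u}}_{\Gamma^{i/g}},(\alpha\beta)^*\chi)=\dim V/V^{\Gamma^{i/g}}=\dim V/V^{(\Gamma')^{i/g}}=(\textup{\textbf{u}}_{(\Gamma')^{i/g}},(\beta')^*\chi)$ gives $\alpha_*\Ind^\Gamma_{\Gamma^{i/g}}\textup{\textbf{u}}_{\Gamma^{i/g}}=\Ind^{\Gamma'}_{(\Gamma')^{i/g}}\textup{\textbf{u}}_{(\Gamma')^{i/g}}$ on the nose, making the further decomposition of $\textup{\textbf{u}}_{\Gamma^r}$ you envision unnecessary.
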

\begin{proof}
%
%OK, dann wollen wir mal; wir müssen die Terme in der Summen-Darstellung einzeln verarzten. Natürlich kommutiert Induktion mit Summation, so dass wir in der Tat einzelne Summanden vorliegen haben. Betrachten wir also einfach den Term 
%\[
%\Ind^\Gamma_{\Gamma^{i/g}}\textup{\textbf{u}}_{\Gamma^{i/g}}\;.
%\]
Let $\Gamma'$ denote the Galois group of $M/K$, and let $\iota:\Gamma_0\hookrightarrow\Gamma$ and $\iota':\Gamma_0'\hookrightarrow\Gamma'$ denote the inclusions of the inertia subgroups. We have a natural commutative diagram
\[
\xymatrix{
\Gamma_0\ar@{^{(}->}[r]^\iota\ar@{->>}[d]_{\alpha_0}&\Gamma\ar@{->>}[d]^\alpha\\
\Gamma_0'\ar@{^{(}->}[r]^{\iota'}&\Gamma'\;,
}
\]
and $\bAr_{L/K}=\iota_*\bAr_{L/K^t}$, $\bAr_{M/K}=\iota'_*\bAr_{M/M\cap K^t}$. In order to show the statement of the proposition, we may and do thus assume that $L/K$, and, hence, $M/K$, are both totally ramified. Let us now use the expression of $\bAr_{L/K}$ and $\bAr_{M/K}$ in terms of the upper numbering ramification filtrations given by Lemma \ref{uppernumberinglem}. Let us fix an integer $i\geq 1$, and let $\beta$ denote the inclusion $\Gamma^{i/g}\hookrightarrow\Gamma$. Then
\[
\Ind^\Gamma_{\Gamma^{i/g}}\textup{\textbf{u}}_{\Gamma^{i/g}}\,=\,\beta_*\textup{\textbf{u}}_{\Gamma^{i/g}}\;,
\]
and we are interested in computing $\alpha_*\beta_*\textup{\textbf{u}}_{\Gamma^{i/g}}=(\alpha\beta)_*\textup{\textbf{u}}_{\Gamma^{i/g}}$. Let $\beta'$ denote the inclusion of $(\Gamma')^{i/g}$ into $\Gamma'$; we claim that
\[
\alpha_*\beta_*\textup{\textbf{u}}_{\Gamma^{i/g}}\,=\,\beta'_*\textbf{\textup{u}}_{(\Gamma')^{i/g}}\;.
\]
And indeed, if $\chi$ is any character of $\Gamma'$ with underlying representation space $V$, then
\[
(\textup{\textbf{u}}_{\Gamma^{i/g}},(\alpha\beta)^*\chi)\,=\,\dim V/V^{\Gamma^{i/g}}\,=\,\dim V/V^{(\Gamma')^{i/g}}\,=\,(\textbf{\textup{u}}_{(\Gamma')^{i/g}},(\beta')^*\chi)
\]
as desired, where $\Gamma$ acts on $V$ via the surjection $\alpha:\Gamma\rightarrow\Gamma'$. Let us write $g=[L:K]$ and $g':=[M:K]$; it now follows that
\begin{eqnarray*}
\alpha_*\frac{1}{g}\sum_{i\geq 1}\Ind^\Gamma_{\Gamma^{i/g}}\textbf{\textup{u}}_{\Gamma^{i/g}}&=&\frac{1}{g}\sum_{i\geq 1}\Ind^{\Gamma'}_{(\Gamma')^{i/g}}\textbf{\textup{u}}_{(\Gamma')^{i/g}}\\
&=&\frac{1}{g/g'}\cdot\frac{1}{g'}\sum_{i\geq 1}\Ind^{\Gamma'}_{(\Gamma')^{i/(g'(g/g'))}}\textbf{\textup{u}}_{(\Gamma')^{i/(g'(g/g'))}}\\
&=&\frac{1}{g'}\sum_{i\geq 1}\Ind^{\Gamma'}_{(\Gamma')^{i/g'}}\textbf{\textup{u}}_{(\Gamma')^{i/g'}}\;,
\end{eqnarray*}
where for the last step we have used that the upper numbering filtration on $\Gamma'$ does not jump on open intervals of the form $(i/g',(i+1)/g')$.

By Lemma \ref{uppernumberinglem}, it now suffices to show that
\[
\alpha_*(\Inf^\Gamma_{\Gamma/\Gamma^{1/g}}(\bAr^t_{L/K})+\frac{1}{2}\Ind^\Gamma_{\Gamma^{1/g}}\textup{\textbf{u}}_{\Gamma^{1/g}})\,=\,\Inf^{\Gamma'}_{\Gamma'/(\Gamma')^{1/g'}}(\bAr^t_{M/K})+\frac{1}{2}\Ind^{\Gamma'}_{(\Gamma')^{1/g'}}\textup{\textbf{u}}_{(\Gamma')^{1/g'}}\;.
\]
By what we have seen so far, 
\[
\alpha_*\Ind^\Gamma_{\Gamma^{1/g}}\textbf{\textup{u}}_{\Gamma^{1/g}}=\Ind^{\Gamma'}_{(\Gamma')^{1/g}}\textbf{\textup{u}}_{(\Gamma')^{1/g}}\;, 
\]
and $(\Gamma')^{1/g}=(\Gamma')^{1/g'}$, so it suffices to show that
\[
\alpha_*(\Inf^\Gamma_{\Gamma/\Gamma_1}(\bAr^t_{L/K}))\,=\,\Inf^{\Gamma'}_{\Gamma'/\Gamma'_1}(\bAr^t_{M/K})\;.
\]

To this end, let $\gamma:\Gamma\rightarrow\Gamma/\Gamma_1$ and $\gamma':\Gamma'\rightarrow\Gamma'/\Gamma'_1$ be the natural projections, and let us write $n=|\Gamma/\Gamma_1|$, $n'=|\Gamma'/\Gamma'_1|$. We have to show that
\[
\alpha_*\gamma^*\Psi^*_{L/K}\bAr_n\,=\,(\gamma')^*\Psi^*_{M/K}\bAr_{n'}\;.\quad\quad(*)
\]
The right square in the diagram
\[
\xymatrix{
\Gamma\ar[d]_\alpha\ar[r]^\gamma&\Gamma_0/\Gamma_1\ar[r]^{\Psi_{L/K}}\ar[d]_{\alpha\mod\Gamma_1}&\mu_n\ar[d]^{[d]}\\
\Gamma'\ar[r]^{\gamma'}&\Gamma_0'/\Gamma_1'\ar[r]^{\Psi_{M/K}}&\mu_{n'}\;,
}\]
commutes: indeed, if $K^t$ denotes the maximal tamely ramified subextension of $L/K$, then $K^t\cap M$ is the maximal tamely ramified subextension of $M/K$, and the extension $K^t/K^t\cap M$ is both totally and tamely ramfied, of degree $d=n/n'$; by \cite{lang_ant} Chap.\ II \S 5 Prop.\ 12, there exists a uniformizer $\pi$ of $K^t$ such that $\pi^d$ is a uniformizer of $K^t\cap M$, which shows the desired commutativity statement. Equality ($*$) now follows from Lemma \ref{bacondbasiclem} ({\rm iii}) together with Lemma \ref{pushforwardelementarylem}, applied to the central square in the above diagram.
\end{proof}

\begin{cor}
Let $K^\alg$ be an algebraic closure of $K$, and let $\mathcal{C}$ denote the set of finite Galois extensions of $K$ within $\mathcal{C}$; then the system $(\bAr_{L/K}|\cdot)_{L/K\in\mathcal{C}}$  defines a distribution $\bAr_K(\cdot)$ on the space of locally constant central functions on the absolute Galois group $G_K=\Gal(K^\alg/K)$ of $K$.
\end{cor}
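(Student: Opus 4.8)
\emph{Strategy.} The plan is to obtain the asserted distribution as the limit of the family $\{\bAr_{L/K}\}_{L\in\mathcal C}$, the whole arithmetic content being packaged in the compatibility already proved in Proposition~\ref{pushforwardprop}; what remains is essentially bookkeeping. I would begin with two standard observations. First, $\mathcal C$, ordered by inclusion, is directed, since the compositum of two finite Galois extensions of $K$ inside $K^\alg$ is again finite and Galois over $K$; and since $K^\alg/K^\sep$ is purely inseparable we have $G_K=\Gal(K^\alg/K)=\Gal(K^\sep/K)$, so by Galois theory every open normal subgroup of $G_K$ is $\Gal(K^\alg/L)$ for a unique $L\in\mathcal C$. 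Second, a $\Q^\alg$-valued locally constant central function $f$ on the compact group $G_K$ has finite image with open fibres; since $G_K$ is profinite these fibres are unions of cosets of a common open normal subgroup $U$, whence $f$ factors through $G_K/U=\Gal(L/K)$ for a suitable $L\in\mathcal C$, say $f=\Inf^{\Gal(L/K)}_{G_K}f_L$ with $f_L$ a central function on $\Gal(L/K)$. Hence the space of locally constant central functions on $G_K$ is the direct limit $\varinjlim_{L\in\mathcal C}C(\Gal(L/K),\Q^\alg)$ taken along the inflation maps attached to the restriction surjections $\Gal(L/K)\twoheadrightarrow\Gal(M/K)$ for $M\subseteq L$ in $\mathcal C$.

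Next I would set $\bAr_K(f):=(\bAr_{L/K}\,|\,f_L)_{\Gal(L/K)}$ and verify that this is independent of the choice of $L$. Given two presentations $f=\Inf^{\Gal(L/K)}_{G_K}f_L=\Inf^{\Gal(M/K)}_{G_K}f_M$, directedness of $\mathcal C$ lets me pass to the compositum and assume $M\subseteq L$, so that $f_L=\alpha^*f_M$, where $\alpha\colon\Gal(L/K)\twoheadrightarrow\Gal(M/K)$ is restriction. Using the symmetry of the pairing (clear from $\overline{\chi}(g)=\chi(g^{-1})$) together with the adjunction $(\chi'\,|\,\alpha_*\chi)_{\Gal(M/K)}=(\alpha^*\chi'\,|\,\chi)_{\Gal(L/K)}$ defining the pushforward and with Proposition~\ref{pushforwardprop}, I compute
\begin{align*}
(\bAr_{L/K}\,|\,\alpha^*f_M)_{\Gal(L/K)}
&=(\alpha^*f_M\,|\,\bAr_{L/K})_{\Gal(L/K)}=(f_M\,|\,\alpha_*\bAr_{L/K})_{\Gal(M/K)}\\
&=(f_M\,|\,\bAr_{M/K})_{\Gal(M/K)}=(\bAr_{M/K}\,|\,f_M)_{\Gal(M/K)},
\end{align*}
so $\bAr_K(f)$ is well defined; it is $\Q^\alg$-linear in $f$ because each pairing is, which is exactly the assertion that $(\bAr_{L/K}|\cdot)_{L/K\in\mathcal C}$ defines a distribution $\bAr_K(\cdot)$ on the space of locally constant central functions on $G_K$. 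Finally, since conjugation commutes with the pushforwards $\alpha_*$, the conjugate characters $\overline{\bAr_{L/K}}$ form a compatible family as well and assemble into a distribution $\overline{\bAr_K}$; applying the identity $\bAr_{L/K}+\overline{\bAr_{L/K}}=\Ar_{L/K}$ from the Remark following Definition~\ref{bcconddefi} at every finite level then yields $\bAr_K+\overline{\bAr_K}=\Ar_K$, i.e.\ $\bAr_K$ is a bisection of the Artin distribution.

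I do not expect a genuine obstacle here: all the substance sits in Proposition~\ref{pushforwardprop}, and the only points requiring (routine) care are the reduction to comparable extensions via the compositum and keeping the adjunction and the symmetry of the pairing straight in the well-definedness check.
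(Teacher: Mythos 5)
Your proof is correct and is essentially the intended argument: the paper states the corollary with no proof, treating it as an immediate consequence of Proposition~\ref{pushforwardprop}, and your write-out (direct limit description of locally constant central functions, definition at a finite level, well-definedness via the pairing adjunction, symmetry, and the pushforward identity) is exactly the routine bookkeeping the paper leaves implicit. The closing remark that $\bAr_K$ is a bisection of $\Ar_K$ goes slightly beyond the stated corollary but matches the surrounding discussion in the paper.
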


\subsection{Behavior of $\bAr_{L/K}^{(p)}$ with respect to restriction to subgroups}\label{subgrpressec}
If $\chi$ is a rational virtual character of $\Gal(L/K)$ and if $p$ is a prime number that is coprime to $n$, we let
\[
\chi^{(p)}\,:=\,\frac{1}{[\Q_p(\mu_n):\Q_p]}\sum_{\sigma\in\Gal(\Q_p(\mu_n)/\Q_p)}\sigma\circ\chi
\]
denote the $\Gal(\Q_p(\mu_n)/\Q_p)$-average of $\chi$; moreover, we set
\[
\chi^{(0)}\,:=\,\chi\;.
\]
We now study the behavior of $\bAr_{L/K}^{(p)}$ under restriction to a subgroup $\Gamma'$ of $\Gamma$, for $p=\chr\kappa_K$. 

\begin{prop}\label{barresprop}
If $M\subseteq L$ is the field whose elements are fixed by $\Gamma'$, then 
\[
\bAr^{(p)}_{L/K}|_{\Gamma'}\,=\,f_{M/K}\bAr^{(p)}_{L/M}+\frac{1}{2}\nu_K(\mathfrak{d}_{M/K})\textbf{\textup{r}}_{\Gamma'}\;.
\]
\end{prop}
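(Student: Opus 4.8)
The statement is the analog for the refined Artin character of the classical conductor–discriminant/restriction formula $\Ar_{L/K}|_{\Gamma'} = f_{M/K}\Ar_{L/M} + \nu_K(\mathfrak{d}_{M/K})\mathbf{r}_{\Gamma'}$ (Serre, \emph{Local Fields}, Chap.~VI). So the first reduction is to recall that classical identity and to observe that $\Ar_{L/K} = \bAr^{(p)}_{L/K} + \overline{\bAr^{(p)}_{L/K}}$, since taking the $\Gal(\Q_p(\mu_n)/\Q_p)$-average commutes with conjugation (complex conjugation lies in that Galois group when $p\nmid n$, or rather the average is stable under it). Thus if I can prove the asserted formula for $\bAr^{(p)}_{L/K}|_{\Gamma'}$ \emph{up to its conjugate}, combining with the classical formula pins it down, provided I also control one "difference" term which is anti-self-conjugate. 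Concretely, the strategy is: write $\bAr^{(p)}_{L/K}|_{\Gamma'} - f_{M/K}\bAr^{(p)}_{L/M} = \frac12\nu_K(\mathfrak{d}_{M/K})\mathbf{r}_{\Gamma'} + \Delta$ for some virtual character $\Delta$ defined over $\Q_p(\mu_n)$, show $\Delta + \overline{\Delta} = 0$ using the classical formula, and then show $\Delta = 0$ by a direct computation of the tame part.

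\textbf{Key steps.} First I would reduce to the two "extreme" cases using the tower structure and Proposition~\ref{pushforwardprop}: by transitivity of $f$ and additivity of $\nu_K(\mathfrak{d})$ in towers $K \subseteq M' \subseteq M$, and by the fact (Remark after Definition~\ref{bcconddefi}) that $\bAr_{L/K} = \iota_*\bAr_{L/K^\ur}$, it suffices to treat (a) the case where $M/K$ is unramified, i.e.\ $\Gamma' \supseteq \Gamma_0$, and (b) the case where $M \supseteq K^\ur$, so that only the inertia group is in play; and within (b), split off the wild part. In case (a), $\bAr_{L/M}$ differs from $\bAr_{L/K}$ only through the inflation-from-$\Gamma_0/\Gamma_1$ piece and the identification $\Psi$, which are literally unchanged by passing from $K$ to an unramified extension ($\Psi_{L/K}=\Psi_{L/K^\ur}$), while $f_{M/K}=[M:K]=[\Gamma:\Gamma']$ and $\nu_K(\mathfrak{d}_{M/K})=0$; then the identity becomes $\bAr^{(p)}_{L/K}|_{\Gamma'} = [\Gamma:\Gamma']\bAr^{(p)}_{L/M}$, which should follow from Mackey/Frobenius reciprocity applied termwise to the defining expression, exactly as for $\Ar$. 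In case (b), use the upper-numbering expression of Lemma~\ref{uppernumberinglem} together with Herbrand's theorem ($\Gamma'^i = \Gamma'\cap\Gamma^{?}$ in suitable numbering) to match the wild sums $\sum_{i\geq1}\frac{1}{[\Gamma_0:\Gamma_i]}\Ind\,\mathbf{u}_{\Gamma_i}$ on both sides — this part is purely classical and contributes the $\frac12\nu_K(\mathfrak{d})$ via the $\frac12$ in front. The genuinely new content is the tame term: the claim reduces to computing $(\Psi_{L/K}^*\bA_n)|_{\mu_{n'}}$ versus $\Psi_{L/M}^*\bA_{n'}$ where $n'=|\Gamma_0'/\Gamma_1'|$ and $n = e_{L/M,\mathrm{tame}}\cdot n'$ — but here I must be careful: $\mu_{n'}$ sits inside $\mu_n$ via the \emph{inclusion} $\Gamma_0'/\Gamma_1' \hookrightarrow \Gamma_0/\Gamma_1$, so Lemma~\ref{bacondbasiclem}(iv) gives $\bA_n|_{\mu_{n'}} = \bA_{n'} + \frac{d-1}{2}\mathbf{r}_{\mu_{n'}}$ with $d = n/n'$, and the correction term $\frac{d-1}{2}\mathbf{r}$ is exactly what is needed to account for the tame part of $\nu_K(\mathfrak{d}_{M/K})$ after inflating and inducing. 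Finally, I would apply the $\Gal(\Q_p(\mu_n)/\Q_p)$-average: since the correction terms $\frac12\nu_K(\mathfrak{d})\mathbf{r}$ are rational (defined over $\Q$), they are fixed by the average, and the average commutes with restriction to $\Gamma'$ and with the transition from $\mu_n$ to $\mu_{n'}$ characters (here one checks that $[\Q_p(\mu_n):\Q_p]/[\Q_p(\mu_{n'}):\Q_p]$ matches the index of the relevant decomposition subgroup — this is the only place where taking the $p$-average is essential rather than cosmetic, and it is why the unaveraged $\bAr_{L/K}$ fails the formula).

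\textbf{Main obstacle.} The hard part is the bookkeeping in the tame term when $d = n/n'$ is \emph{not} coprime to $[\Q_p(\mu_n):\Q_p]$-type quantities: one must verify that the $\Gal(\Q_p(\mu_n)/\Q_p)$-average of the inflation–induction of $\Psi_{L/K}^*\bA_n$, restricted to $\Gamma'$, disassembles correctly into $f_{M/K}$ copies of the analogous object for $L/M$ plus the rational multiple of $\mathbf{r}_{\Gamma'}$. This requires carefully identifying, via Frobenius reciprocity on a $\Q_p$-rational representation $V'$ of $\Gamma'$, the quantity $(\bAr^{(p)}_{L/K}|_{\Gamma'} \,|\, \chi_{V'})_{\Gamma'}$ with the corresponding conductor-theoretic expression — reducing everything to the identity $(\bA_n|_{\mu_{n'}})^{(p)} = (\bA_{n'})^{(p)} + \tfrac{d-1}{2}\mathbf{r}_{\mu_{n'}}$ of Lemma~\ref{bacondbasiclem}(iv) (which survives the $p$-average because its right-hand correction is rational), and then keeping track of the interaction between the Galois-averaging operator and the non-normal inclusion $\mu_{n'}\hookrightarrow\mu_n$, i.e.\ between $\Gal(\Q_p(\mu_n)/\Q_p)$ and $\Gal(\Q_p(\mu_{n'})/\Q_p)$. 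I expect that once the problem is reduced via Proposition~\ref{pushforwardprop} and Lemma~\ref{uppernumberinglem} to the totally tamely ramified case, and the wild contribution is dispatched by the classical Herbrand-function argument copied from Serre, the remaining verification is a finite computation with cyclic groups and their cyclotomic characters, and the $\frac12$-factors will assemble precisely into $\frac12\nu_K(\mathfrak{d}_{M/K})$ by the conductor–discriminant formula.
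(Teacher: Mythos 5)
Your high-level plan — reduce to the totally ramified case via a Mackey/Frobenius argument, invoke the classical restriction formula for $\Ar_{L/K}$ from Serre to dispatch the wild part and the $\tfrac12\nu_K(\mathfrak d_{M/K})\textbf{\textup{r}}_{\Gamma'}$ term, and then compare the anti-self-conjugate tame corrections — is essentially the right skeleton and agrees with the paper's. But there is a genuine gap in the tame comparison, and it is precisely at the point you identify as ``the hard part.''

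You claim you can ``reduce to the totally tamely ramified case'' and that the remaining difficulty is a bookkeeping issue between $\Gal(\Q_p(\mu_n)/\Q_p)$ and $\Gal(\Q_p(\mu_{n'})/\Q_p)$. Neither is correct. If $L/K$ were totally tamely ramified, then $K^t = L$ and the restriction formula would hold \emph{unaveraged} — yet the paper explicitly remarks that the analogue of the formula fails for the unaveraged $\bAr_{L/K}$ in general, so no such reduction exists. The real obstruction is that the diagram relating $\Psi_{L/K}|_{\Gamma'}$ and $\Psi_{L/M}$ does \emph{not} commute with the canonical inclusion $\iota:\mu_{n'}\hookrightarrow\mu_n$: the unique injective map $\alpha'_t:\mu_{n'}\to\mu_n$ making it commute factors as $\alpha'_t = \iota\circ\alpha''_t$ where $\alpha''_t$ is the automorphism $\zeta\mapsto\zeta^{p^s}$ of $\mu_{n'}$, with $p^s=[K^tM:K^t]$ the degree of the totally \emph{wildly} ramified extension $K^tM/K^t$. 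This comes from the Eisenstein relation $(\pi'_t)^{p^s}\equiv\pi_t\pmod{1+\m_{K^tM}}$, which forces $\Psi_{L/K}(\sigma)=\Psi_{L/M}(\sigma)^{p^s}$ on $\Gamma'_0/\Gamma'_1$. The piece $\bAr_{n'}-\overline{\bAr}_{n'}$ is \emph{not} invariant under $(\alpha''_t)^*$, and this is the one and only reason the $p$-average is needed: $\Gal(\Q_p(\mu_{n'})/\Q_p)$ is cyclic generated by the $p$-power Frobenius, so $\bAr_{n'}^{(p)}-\overline{\bAr}_{n'}^{(p)}$ \emph{is} invariant under $\zeta\mapsto\zeta^{p^s}$. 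Your application of Lemma~\ref{bacondbasiclem}(iv) handles $\iota^*$ correctly, and the interplay between the two cyclotomic Galois groups that you flag is a routine verification ($\iota^*(\chi^{(p)})=(\iota^*\chi)^{(p)}$); what is missing from your argument is the Frobenius twist $\alpha''_t$ produced by wild ramification, without which the proof cannot close.
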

\begin{proof}
Let us first reduce to the case where $L/K$ is totally ramified. We have a commutative diagram of inclusions
\[
\xymatrix{
\Gamma_0\ar[r]^\beta&\Gamma&\\
\Gamma_0'\ar[r]^{\beta'}\ar[u]^{\alpha_0}&\Gamma'\ar[u]^\alpha&\;,
}
\]
where $\Gamma_0'=\Gamma_0\cap\Gamma'$. By \cite{serre_linres} Prop.\ 22, if $\chi$ is a central function on $\Gamma_0$ with values in a field of characteristic zero, then
\[
\alpha^*\beta_*\chi\,=\,[\Gamma:\Gamma_0\Gamma']\beta'_*\alpha_0^*\chi\;;
\]
moreover, $[\Gamma:\Gamma_0\Gamma']=f_{M/K}$. Let us assume that the statement of the proposition holds in the totally ramified case; then
\begin{eqnarray*}
\bAr_{L/K}^{(p)}|_{\Gamma'}&=&\alpha^*\bAr_{L/K}^{(p)}\\
&=&\alpha^*\beta_*\bAr_{L/K^\ur}^{(p)}\\
&=&f_{M/K}\beta'_*\alpha_0^*\bAr_{L/K^\ur}^{(p)}\\
&=&f_{M/K}\beta'_*\left(\bAr^{(p)}_{L/MK^\ur}+\frac{1}{2}\nu_{K^\ur}(\mathfrak{d}_{MK^\ur/K^\ur})\textup{\textbf{r}}_{\Gamma_0'}\right)\\
&=&f_{M/K}\bAr^{(p)}_{L/M}+f_{M/K}\frac{1}{2}\nu_K(\mathfrak{d}_{MK^\ur/K^\ur})\textbf{\textup{r}}_{\Gamma'}\;.\quad(*)
\end{eqnarray*}
By \cite{serre_localfields} Chap.\ III \S 4 Prop. 8, we have
\[
\mathfrak{d}_{MK^\ur/K}=N_{K^\ur/K}\mathfrak{d}_{MK^\ur/K^\ur}\;,
\]
which shows that 
\[
\nu_K(\mathfrak{d}_{MK^\ur/K})\,=\,f_{L/K}\nu_K(\mathfrak{d}_{MK^\ur/K^\ur})\;.
\]
On the other hand, [ibid.] shows that
\[
\mathfrak{d}_{MK^\ur/K}\,=\,[MK^\ur:M]\mathfrak{d}_{M/K}\,=\,f_{MK^\ur/M}\mathfrak{d}_{M/K}\;,
\]
which implies that
\[
\nu_K(\mathfrak{d}_{MK^\ur/K})\,=\,f_{MK^\ur/M}\nu_K(\mathfrak{d}_{M/K})\;.
\]
Combining these two results, we obtain $f_{L/K}\nu_K(\mathfrak{d}_{MK^\ur/K^\ur})=f_{MK^\ur/M}\nu_K(\mathfrak{d}_{M/K})$. Since $f_{MK^\ur/M}=f_{L/M}$ and since $f_{L/K}=f_{L/M}f_{M/K}$, we get
\[
f_{M/K}\nu_K(\delta_{MK^\ur/K^\ur})=\nu_K(\delta_{M/K})\;.
\]
Plugging this into the expression ($*$), we obtain the statement of the proposition. We have thus reduced to the case where $L/K$ is totally ramified.

Let us now write $\bAr_{L/K}$ in terms of $\Ar_{L/K}$ as
\begin{eqnarray*}
\bAr_{L/K}&=&\frac{1}{2}\Ar_{L/K}+\Inf^{\Gamma}_{\Gamma/\Gamma_1}\Psi_{L/K}^*(\bAr_n-\frac{1}{2}\textbf{\textup{u}}_{\mu_n})\\
&=&\frac{1}{2}\Ar_{L/K}+\frac{1}{2}\Inf^{\Gamma}_{\Gamma/\Gamma_1}\Psi_{L/K}^*(\bAr_n-\overline{\bAr}_n)\;.
\end{eqnarray*}
By \cite{serre_localfields} Chap.\ VI \S 2 Prop.\ 4, we know that 
\[
\Ar_{L/K}|_{\Gamma'}\,=\,f_{M/K}\Ar_{L/M}+\nu_K(\mathfrak{d}_{M/K})\textbf{\textup{r}}_{\Gamma'}\;.
\]
Let us now consider the diagram
\[
\xymatrix{
\Gamma\ar@{^{(}->}[r]^\gamma&\Gamma/\Gamma_1\ar@{^{(}->}[r]^{\Psi_{L/K}}&\mu_n\\
\Gamma'\ar@{^{(}->}[r]^{\gamma'}\ar@{^{(}->}[u]_{\alpha}&\Gamma'/\Gamma_1'\ar@{^{(}->}[r]^{\Psi_{L/M}}\ar@{^{(}->}[u]_{\alpha_t}&\mu_{n'}\ar@{^{(}->}[u]_{\alpha_t'}\;,
}
\]
where $n'$ denotes the cardinality of $\Gamma'/\Gamma'_1$ and where $\alpha_t'$ is the unique injective homomorphism making the diagram commute. By the above, we must show that
\[
\alpha^*\gamma^*\Psi_{L/K}^*(\bAr_n^{(p)}-\overline{\bAr}_n^{(p)})\,=\,(\gamma')^*\Psi_{L/M}^*(\bAr_{n'}^{(p)}-\overline{\bAr}_{n'}^{(p)})\;.
\]
For this, it suffices to show that
\[
\bAr_{n'}^{(p)}-\overline{\bAr}_{n'}^{(p)}\,=\,(\alpha_t')^*(\bAr_n^{(p)}-\overline{\bAr}_n^{(p)})\;.
\]
The image of $\alpha_t'$ must coincide with the subgroup $\mu_{n'}$ of $\mu_n$; hence we can write
\[
\alpha'_t=\iota\circ\alpha''_t\;,
\]
where $\iota:\zeta\mapsto\zeta$ is the canonical inclusion of $\mu_{n'}$ into $\mu_n$ and where $\alpha''_t$ is an automorphism of $\mu_{n'}$. By Lemma \ref{bacondbasiclem} ({\rm iv}) and since $\textbf{\textup{r}}_{\mu_{n'}}=\overline{\textbf{\textup{r}}}_{\mu_{n'}}$, we know that
\[
\iota^*(\bAr_n-\overline{\bAr}_n)\,=\,\bAr_{n'}-\overline{\bAr}_{n'}\;.
\]
Moreover, if $\chi$ is any rational virtual character on $\mu_n$, then
\[
\iota^*(\chi^{(p)})\,=\,(\iota^*\chi)^{(p)}\;.
\]
Indeed, this is trivial for $p=0$; for $p>0$ let us write $r=[\Q_p(\mu_n):\Q_p]$, $r'=[\Q_p(\mu_{n'}):\Q_p]$, $r=dr'$, $G=\Gal(\Q_p(\mu_n):\Q_p)$ and $H=\Gal(\Q_p(\mu_n)/\Q_p(\mu_{n'}))$, and let $S\subseteq G$ be a system of representatives for $G/H$; then $|S|=d$, and for $\zeta\in\mu_{n'}$, we have
\[
\frac{1}{r}\sum_{\sigma\in G}\sigma(\zeta)\,=\,\frac{1}{r}\sum_{\tau\in H}\sum_{\sigma\in S}\sigma\tau(\zeta)\,=\,\frac{1}{r'}\sum_{\sigma\in G/H}\sigma(\zeta)\;.
\]
It thus suffices to show that
\[
\bAr_{n'}^{(p)}-\overline{\bAr}_{n'}^{(p)}\,=\,(\alpha_t'')^*(\bAr_{n'}^{(p)}-\overline{\bAr}_{n'}^{(p)})\;.
\]
Let us first consider the case $p>0$; then $n'$ is coprime to $p$ since $n'|n$. By \cite{serre_localfields} Chap.\ IV \S 4 Prop.\ 16, $\Gal(\Q_p(\mu_{n'})/\Q_p)$ is cyclic, generated by an element $\sigma$ whose action on $\mu_{n'}$ is given by $\zeta\mapsto \zeta^p$. It thus suffices to show that $\alpha_t''$ is given by $\zeta\mapsto\zeta^{p^s}$ for some natural number $s$. And indeed, since $L/K^t$ is totally wildly ramified, so is $K^tM/K^t$, where the composite field $K^tM$ is the maximal tamely ramified subfield of the extension $L/M$. That is, $e:=[K^tM:K^t]=p^s$ for some integer $s$. Let $\pi'_t$ be a uniformizer for $K^tM$; then by \cite{serre_localfields} Chap. I \S 6 Prop.\ 18, $\pi'_t$ satisfies an equation
\[
X^e+a_1X^{e-1}+\ldots+a_n=0
\]
with $a_i\in\O_{K^t}$, $\pi_t|a_i$, $\pi_t^2\nmid a_n$. Now $\pi_t:=a_n$ is a uniformizer for $K^t$, and in $(MK^t)^\times$ we obtain a congruence
\[
(\pi'_t)^e\equiv\pi_t\mod 1+\m_{K^tM}\;;
\]
in other words,
\[
\frac{(\pi'_t)^e}{\pi_t}\,\in\,1+\m_{K^tM}\;.
\]
Now for $\sigma\in\Gamma'/\Gamma_1'$, we have
\begin{eqnarray*}
\Psi_{L/K}(\sigma)&=&\frac{\sigma(\pi_t)}{\pi_t}\mod 1+\m_{K^t}\\
&=&\frac{\sigma(\pi_t)}{\pi_t}\mod 1+\m_{K^tM}\\
&=&\frac{\sigma((\pi_t')^e)}{(\pi_t')^e}\mod 1+\m_{K^t}\\
&=&\Psi_{L/M}(\sigma)^e\;,
\end{eqnarray*}
which shows that $\alpha''_t$ maps $\zeta$ to $\zeta^e=\zeta^{p^s}$, as desired. If $p=0$, then $L/K$ is totally tamely ramified, and hence $K^tM=K^t=L$; the map $\alpha_t''$ is thus the identity, and there is nothing to show.
\end{proof}

\section{Analytic tori}\label{antorisec}

For any field $K$ and any prime number $p$, we introduce a category $\Mod(G_K,\Z_p)$ of certain integral, $\Z_p$-rational $G_K$-representations. They arise naturally from algebraic $K$-tori. In fact, the category $\Tori_K$ of algebraic $K$-tori is naturally equivalent to the category of character groups $\Mod(G_K,\Z)$, and $\Mod(G,\Z_p)$ is naturally equivalent to the pseudo-abelian envelope of $\Mod(G_K,\Z)\otimes_\Z\Z_p$. If $K$ is local with $\chr\kappa_K=p$, then $\Mod(G_K,\Z_p)$ is naturally equivalent to a category of semiaffinoid $K$-groups, the category $\aTor_K$ of analytic $K$-tori, cf.\ Remark \ref{anintrem}. If in addition $\kappa_K$ is algebraically closed, then objects in this category are naturally obtained from semiabelian $K$-varieties with potentially ordinary reduction, cf.\ the first paragraph of Section \ref{abvarapplsec}. 

\begin{defi} \label{charmoddefi}
If $R$ is a ring and if $G$ is a group, we let $\Mod(G,R)$ denote the category of finite free $R$-modules $M$ equipped with an $R$-linear $G$-action that factors through a finite quotient of $G$; such a quotient is called a \emph{splitting quotient} for $M$. A morphism $\phi$ in $\Mod(G,R)$ is called an \emph{isogeny} if $\phi\otimes_\Z\Q\in\Mod(G,R\otimes_\Z\Q)$ is an isomorphism.
\end{defi}

\begin{remark}\label{redtofiniterem}
The category $\Mod(G,R)$ is $R$-linear, and it only depends on the profinite completion $\hat{G}$ of $G$; in fact,
\[
\Mod(G,R)\,=\,\Mod(\hat{G},R)\,=\,\varinjlim_i\Mod(G/G_i,R)\;,
\]
where the $G_i$ vary in the filtered set of normal subgroups of finite index of $G$ and where the inductive limit is an ascending union.
\end{remark}
\begin{defi}
 If $K$ is a field with absolute Galois group $G_K$, the category $\Mod(G_K,R)$ is called the category of \emph{cocharacter modules} with $R$-coefficients over $K$. If $L/K$ is a finite Galois extension whose Galois group is a splitting quotient for an object $W$ of $\Mod(G_K,R)$, we say that $L/K$ is a splitting Galois extension for $W$.
\end{defi}

\begin{remark}\label{torirem}
For any field $K$, the category $\Mod(G_K,\Z)$ is equivalent to the category of $K$-tori.
\end{remark}

The following lemma generalizes \cite{formalnms} Lemma 7.16:

\begin{lem}\label{karoubilem}
Let $G$ be any group, and let $R\hookrightarrow A$ be an injective homomorphism of Dedekind domains of characteristic zero, where $A$ is local and where the image of $R$ in $A$ is dense for the valuation topology. Then the functor $\cdot\otimes_RA$ induces a natural equivalence
\[
\Mod(G,A)\cong(\Mod(G,R)\otimes_RA)^{\textup{pa}}\;,
\]
where $\textup{pa}$ denotes the formation of the pseudo-abelian envelope. Moreover, if $\phi$ is an isogeny in $\Mod(G,A)$, there exists an object $T$ in $\Mod(G,A)$ such that $\phi\oplus\id_T$ extends to an isogeny in $\Mod(G,R)$.
\end{lem}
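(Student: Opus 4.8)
The plan is to prove the two assertions in tandem, since the "moreover" about isogenies is really the engine that drives the equivalence. First I would fix a group $G$ and, using Remark \ref{redtofiniterem}, reduce to the case of a finite group: every object of $\Mod(G,A)$ and every object of $\Mod(G,R)\otimes_RA$ has a splitting quotient through which it factors, both categories are the ascending unions of the corresponding categories for finite quotients, and the pseudo-abelian envelope commutes with filtered colimits of this shape, so it suffices to prove the statement with $G$ finite. Then $\Mod(G,R)$ is just the category of $R[G]$-lattices (finite free over $R$) and $\Mod(G,A)$ the category of $A[G]$-lattices. The functor $\cdot\otimes_RA$ is visibly $R$-linear and additive, hence extends canonically to the pseudo-abelian envelopes; one checks it is compatible with the idempotent-completion, so the content is that the induced functor $(\Mod(G,R)\otimes_RA)^{\mathrm{pa}}\to\Mod(G,A)$ is fully faithful and essentially surjective.

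For \emph{full faithfulness}: a Hom-group in $\Mod(G,R)\otimes_RA$ between $M$ and $N$ is $\Hom_{R[G]}(M,N)\otimes_RA$, and I must show the natural map $\Hom_{R[G]}(M,N)\otimes_RA\to\Hom_{A[G]}(M\otimes_RA,N\otimes_RA)=\Hom_{R[G]}(M,N)\otimes_RA$ — wait, that is already an isomorphism because $M$ is finitely presented over the Noetherian ring $R[G]$ and $A$ is flat (indeed torsion-free, hence flat, over the Dedekind domain $R$). So full faithfulness on $\Mod(G,R)\otimes_RA$ is formal, and it propagates to the pseudo-abelian envelope by the standard fact that idempotent completion is fully faithful and preserves fully faithful functors between additive categories. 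For \emph{essential surjectivity} I would take an arbitrary $A[G]$-lattice $L$ and produce an $R[G]$-lattice $M$ together with an idempotent $e$ on $M\otimes_RA$ with image isomorphic to $L$. The key input is that $R$ is dense in $A$ for the valuation topology and $A$ is a local (hence discrete valuation, being a Dedekind domain) ring: the $A[G]$-module structure on $L$ is a $G$-equivariant $A$-algebra map $A[G]\to\End_A(L)$, i.e.\ a point of an affine $A$-scheme of matrices satisfying the relations of $G$; by density of $R$ and smoothness/openness of the relevant locus — more elementarily, by approximating the finitely many structure matrices $p$-adically closely enough and invoking that a matrix over $A$ congruent to an invertible one mod $\mathfrak m_A$ is invertible — one finds an $R[G]$-lattice $M_0$ with $M_0\otimes_RA\cong L\oplus L'$ for some complement $L'$; this is exactly the assertion that $L$ becomes a direct summand of something in the image, which is what pseudo-abelian essential surjectivity demands. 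This last approximation step is the one I expect to be the main obstacle, and it is precisely where the hypotheses (density, $A$ local of characteristic zero) are used; it is also the content of the "moreover" clause, which says the idempotent cutting out $L$ can be cleared of denominators after adding a suitable $T$.

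Finally, for the isogeny statement: given an isogeny $\phi:L_1\to L_2$ in $\Mod(G,A)$, i.e.\ $\phi\otimes_A\Frac(A)$ an isomorphism, I would first use essential surjectivity just proved to write $L_i\oplus L_i'\cong M_i\otimes_RA$ with $M_i\in\Mod(G,R)$; then $\phi\oplus\id$ on $L_1\oplus L_1'$ together with a suitable identification gives a $G$-map between $A$-lattices that I want to descend to $R$. Clearing denominators, $\phi$ is represented by a matrix over $A$ whose determinant is a nonzero element of $A$; after rescaling by a power of a uniformizer and approximating the entries by elements of $R$ sufficiently closely (again using density, and that "close to an isogeny" is an open condition since the determinant is a continuous nonvanishing function), one obtains an $R[G]$-map $\psi:M_1\to M_2$ that is still an isogeny and whose base change to $A$ agrees with $\phi\oplus\id_T$ up to an automorphism, where $T=L_1'$ (enlarged if necessary). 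Absorbing that automorphism into the choice of isomorphism $M_i\otimes_RA\cong L_i\oplus L_i'$ completes the proof. The routine bookkeeping — that the approximation can be done $G$-equivariantly, and that "isogeny" and "isomorphism after $\otimes\Q$" are preserved — I would leave to the reader, flagging only that finiteness of $G$ and Noetherianness of $R[G]$ make all the modules finitely presented so that the approximations live in finite-dimensional spaces.
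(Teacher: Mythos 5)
Your reduction to finite $G$ and your full-faithfulness argument (finite presentation over the Noetherian ring $R[G]$ plus flatness of $A$ over $R$, then propagation through idempotent completion) are both fine and match the paper's implicit use of the same facts. The problems are in the essential-surjectivity step and in the isogeny-lifting step, where your plan diverges from the paper's and has a real gap.

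You propose to realize a given $A[G]$-lattice $L$ by approximating the finitely many structure matrices $\rho(g)\in\GL_n(A)$ by matrices over $R$. But a coordinatewise approximation of $\rho$ is not a group homomorphism: the relations of $G$ are not an open condition that a nearby tuple of matrices will satisfy, and the representation scheme $\Hom(G,\GL_n)$ is in general not smooth over $A$, so "smoothness/openness of the relevant locus'' is not available. Moreover, even if you could produce a homomorphism $\rho'\colon G\to\GL_n(R)$ with $\rho'\equiv\rho\pmod{\m_A}$, you would then need $\rho'\otimes_R A\cong\rho$; that deformation-rigidity statement is true when $|G|$ is invertible in $A$ but fails in the case that actually matters here ($A=\Z_p$, $p\mid |G|$), where non-isomorphic $\Z_p[G]$-lattices can have isomorphic reductions. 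Finally, your argument as written would produce $M_0\otimes_RA\cong L$, with no complement $L'$ in sight, i.e.\ essential surjectivity of $\Mod(G,R)\otimes_RA\to\Mod(G,A)$ \emph{before} pseudo-abelian completion — which is false, and is precisely why the lemma is stated with the envelope. The same equivariance problem recurs in your isogeny step: approximating the entries of $\phi$ by elements of $R$ need not yield a $G$-equivariant map, and you flag this as "routine bookkeeping'' when it is in fact the crux.

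The paper's route dodges all of this by working in two stages. First, over the fraction fields $S\subset B$: since $B[G]$ is semisimple and $B[G]=S[G]\otimes_SB$, the rational module $M_B$ is a direct summand of a sum of copies of the regular representation, so one can choose a complement $M'_B$ with $M_B\oplus M'_B\cong V_S\otimes_SB$ for some $S[G]$-module $V_S$; this is where the complement $L'$ comes from, and it is chosen \emph{before} any approximation. Second, integrally: pick an $A[G]$-lattice $M'_A\subset M'_B$, and approximate the generators of the $A[G]$-lattice $M_A\oplus M'_A$ by elements of $V_S$ congruent to them modulo $\pi(M_A\oplus M'_A)$. Because those approximants already lie in the fixed $S[G]$-module $V_S$, the $R[G]$-module $V_R$ they generate is automatically $G$-stable — no equivariance has to be checked — and Nakayama shows $V_R\otimes_RA=M_A\oplus M'_A$. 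The isogeny statement is then obtained by running the same construction for $N_A\oplus M'_A$ first, and then choosing the approximants for $M_A\oplus M'_A$ inside the resulting $W_R$, so that $V_R\subseteq W_R$ by construction. If you want to salvage your proposal, the key idea to import is exactly this order of operations: complement rationally using semisimplicity so that the ambient $S[G]$-module is fixed once and for all, and only then approximate lattice generators rather than structure or morphism matrices.
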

\begin{proof}
By Remark \ref{redtofiniterem}, we may assume that $G$ is finite. Let $M_A$ be an object of $\Mod(G,A)$, and let us write $S$ and $B$ for the fraction fields of $R$ and $A$ respectively.
%Let $M[1/p]$ denote the $\Q_p[\Gamma]$-module that is obtained from $M$ by inverting $p$. 
Every irreducible factor of $M_B:=M_A\otimes_{A}B$ in $B[G]\textup{-mod}$ is a direct summand of $B[G]=S[G]\otimes_SB$; hence there exist finite modules $V_S$ and $M'_B$ over $S[G]$ and $B[G]$ respectively such that
\[
V_B\,=\,M_B\oplus M'_B
\]
in $B[G]\textup{-mod}$, where we write $V_B:=V_S\otimes_SB$. In other words, we can complement $M_B$ such that the resulting direct sum descends to $S$; it remains to do the same on the integral level. To this end, let now $M'_A$ be an $A[G]$-lattice of $M'_B$; then $M_A\oplus M'_A$ is an $A[G]$-lattice of $V_B$, and it remains to descend it from $A[G]$ to $R[G]$.
%We descend it to a $\Z$-lattice $\ul{V}$ of $V$, as follows: 
Let $(v_i)_{i\in I}$ and $(v'_i)_{i\in I'}$ be finite systems of generators of $M_A$ and $M'_A$ respectively. Since $V_S$ is dense for the valuation topology on $V_B$, there exist systems $(w_i)_{i\in I}$ and $(w_i')_{i\in I'}$ in $V_S$ such that 
\begin{eqnarray*}
v_i-w_i\,\in\,\pi\cdot(M_A\oplus M'_A)&\forall&i\in I\quad\textup{and}\\
v'_i-w'_i\,\in\,\pi\cdot(M_A\oplus M'_A)&\forall&i\in I'\;,
\end{eqnarray*}
where $\pi$ denotes a uniformizer of $R$. In particular, the $w_i$ and the $w_i'$ are contained in $M_A\oplus M'_A$. Let $V_R\subseteq V_S$ be the $R[G]$-submodule generated by the $w_i$ and the $w_i'$. Since the inclusion of $V_A:=V_R\otimes_RA$ into $M_A\oplus M'_A$ reduces to an isomorphism modulo $\pi$, Nakayama's Lemma implies that $V_A=M_A\oplus M'_A$ as $A$-modules and, hence, as $A[G]$-modules, as desired.

We conclude that every $M_A\in\Mod(G,A)$ is a direct summand of an object that is induced from an object in $\Mod(G,R)$ via base change $\cdot\otimes_RA$; that is, it corresponds to a pair $(V_R,\phi)$, where $V_R\in\Mod(G,R)$ and where $\phi$ is an idempotent endomorphism of $V_R\otimes_RA$ in $A[G]\textup{-mod}$. If $N_A\in\Mod(G,A)$ corresponds to a pair $(W_R,\psi)$, then the morphisms $\alpha:M_A\rightarrow N_A$ correspond to the morphisms $\beta:V_{R}\otimes_RA\rightarrow W_{R}\otimes_RA$ such that  that $\beta\phi=\psi\beta=\beta$, which proves the first assertion. 

The assertion regarding isogenies is proved along the same lines, as follows: let $M_A\rightarrow N_A$ be an isogeny; we use it to identify $M_B$ with $N_B$. Let us choose a suitable complement $M'_B$ of $M_B=N_B$ as above as well as an $A$-lattice $M'_A$ for this complement; then after choosing suitable generators as before, we obtain lattices $V_R$ and $W_R$ which descend $M_A\oplus M_A'$ as well as $N_A\oplus M_A'$, and it remains to see that $V_R\subseteq W_R$ within the $S$-space descending $M_B\oplus M_B'=N_B\oplus M_B'$. However, this follows form the fact that $M_A\subseteq N_A$ and, hence, $M_A\oplus M_A'\subseteq N_A\oplus M_A'$.
\end{proof}

\begin{remark}\label{anintrem}
We have already noted that for any field $K$, the category $\Mod(G_K,\Z)$ is equivalent to the category $\Tori_K$ of algebraic $K$-tori. By Lemma \ref{karoubilem}, for any prime number $p$, $\Mod(G_K,\Z_p)$ is thus equivalent to $(\Tori_K\otimes_\Z\Z_p)^\textup{pa}$. If $K$ is complete discretely valued and if the characteristic of the residue field of $K$ coincides with $p$, this category can be given a geometric interpretation within the framework of rigid or uniformly rigid geometry. Let us discuss the uniformly rigid version, cf.\ \cite{formalnms} Lemma 7.14: the category $\Mod(G_K,\Z_p)$ is equivalent to the category $\uTor_K$ of analytic $K$-tori (called uniformly rigid tori in [ibid.]), which is a category of semiaffinoid $K$-groups. Explicitly, if $M\in\Mod(G_K,\Z_p)$ is split by a finite Galois extension $L/K$, let us set
\[
T_L\,:=\,M\otimes_{\Z_p}\hat{\G}_{m,L}\;,
\]
which is an $r$-fold product of copies of the quasi-compact uniformly rigid open unit disc $\hat{\G}_{m,L}$ over $L$ equipped with the multiplicative group structure, where $r=\rank_{\Z_p}M$. The given $G_K$-action on $M$ induces a Galois descent datum on $T_L$; the resulting semi-affinoid $K$-group $T$ (cf.\ \cite{formalnms} Cor.\ 4.16) is the analytic $K$-torus corresponding to $M$.
\end{remark}

\begin{defi}
If $K$ is complete discretely valued and if $\chr\kappa_K=p>0$, we will implicitly use the equivalence discussed in Remark \ref{anintrem}, and we write
\[
X_*(\cdot)\,:\,\uTor_K\rightarrow\Mod(G_K,\Z_p)\quad;\quad T\mapsto X_*(T):=\Hom(T_L,\hat{\G}_{m,L})
\]
for the natural inverse functor. The object $X_*(T)\in\Mod(G_K,\Z_p)$ is called the cocharacter module of $T$. Clearly $\dim T=\rank_{\Z_p}X_*(T)$, where the dimension of a smooth uniformly rigid space is defined in the obvious way.
\end{defi}

%Again, let us point out that our proofs will make no use of the category $\uTor_K$ whatsoever; we could have written everything in terms of cocharacter modules. However, the geometric intuition behind our arguments may become more transparent if the reader keeps the above equivalence in mind; this is the only reason why we occationally work with objects in $\uTor_K$ instead of $\Mod(G_K,\Z_p)$.

Let us observe that if $T$ is an analytic or an algebraic $K$-torus and if $L/K$ is a finite field extension, then 
\[
X_*(T\otimes_KL)\,=\,\Res_{G_L}X_*(T)
\]
is obtained from $X_*(T)$ by restricting the $G_K$-action to $G_L\subseteq G_K$.

\section{The base change conductor}\label{bccondsec}

Let $K$ be a complete discretely valued field with $\chr\kappa_K=p>0$. 
%In \cite{bisecartpre}, the geometric base change conductor $c_{\textup{geom}}(T)$ of an object $T$ in $\uTor_K$ is defined via formal Néron models. 
We define the Galois-theoretic base change conductor $c_\Gal(T)$ of $T$ by mimicking de Shalit's recipe for the Lie algebra of an algebraic torus (\cite{chaiyudeshalit} A1.9): we define $c_\Gal(T):=c_\Gal(X_*(T))$ as a sum
\[
c_\Gal(X_*(T))\,:=\,c_\lin(X_*(T))\,+\,c_\nonlin(X_*(T))
\]
of a linear part $c_\lin(X_*(T))$ and a nonlinear part $c_\nonlin(X_*(T))$. Both $c_\lin(\cdot)$ and $c_\nonlin(\cdot)$ are additive with respect to direct sums, and $c_\lin(\cdot)$ is even $\O_K$-linear, that is, an additive invariant on $\Mod(G,\O_K)$. In fact, $c_\lin(\cdot)$ can be defined in a rather general setting. To define $c_\Gal(\cdot)$, we do not use any nonarchimedean analytic geometry whatsoever.

As we show in Section \ref{reltogeomsec}, $c_\Gal(T)$ coincides with the geometric base change conductor $c_\textup{geom}(T)$ of $T$ which can be defined in terms of formal Néron models of formally finite type for uniformly rigid spaces (see \cite{formalnms}); the definition of $c_\Gal(T)$ may be viewed as a recipe to compute $c_{\textup{geom}}(T)$. The reason why we present this recipe in terms of a definition is the following: if one is merely interested in a formula for the base change conductor of a semiabelian $K$-variety with potentially ordinary reduction, then one can work exclusively with $c_\Gal(\cdot)$ and avoid $c_{\textup{geom}}(\cdot)$ altogether; the theory of uniformly rigid spaces and their formal Néron models then disappears from the arguments.

%We then define, representation-theoretically, a base change conductor $c(\cdot)$ for these objects, and we give a geometric interpretation for $c(\cdot)$ in terms of Néron models of formally finite type. We also prove that $c(\cdot)$ generalizes the classical base change conductor for algebraic $K$-tori and for semiabelian $K$-varieties with potentially ordinary reduction. Using this compatibility as well as the isogeny invariance of the base change conductor for algebraic $K$-tori, we show that $c(\cdot)$ is an isogeny invariant. We then show that $c(\cdot)$ coincides with the refined Artin conductor that we introduced above; in other words, we prove that
%\[
%c(\cdot)=\bAr_K(\cdot\otimes_{\Z_p}\Q_p)\;.
%\]
%Let us point out that rigid or uniformly rigid geometry and Néron models of formally finite type are not used in the proof of this formula.
\subsection{The linear base change conductor}\label{linpartsec}

In this subsection, let us pass to the more general situation where $K$ is a henselian valued field. Let $\O_K$ denote the ring of integers of $K$; then each finite field extension $L$ of $K$ carries a unique valuation extending the given valuation on $K$, and the integral closure $\O_L$ of $\O_K$ in $L$ coincides with the ring of integral elements with respect to this valuation. In this context, we define:
\begin{defi}
Let $M$ be in $\Mod(G_K,\O_K)$, let $L/K$ be a finite Galois splitting extension for $M$, let $\Gamma$ denote the Galois group of $M$, and let $e_{L/K}$ denote the ramification index of $L/K$. We let
\begin{enumerate}
\item 
\[
\phi_{M,L}\,:\,(M\otimes_{\O_K}\O_L)^{\Gamma}\otimes_{\O_K}\O_L\rightarrow M\otimes_{\O_K}\O_L
\]
denote the map sending $a\otimes b$ to $a\cdot b$, and we set
\item 
\begin{eqnarray*}
c_\lin(M)&:=&\nu_K({\det}_{\O_L}\phi_{M,L})\\
&=&\frac{1}{e_{L/K}}\length_{\O_L}(\coker\phi_{W,K})\;.
\end{eqnarray*}
\end{enumerate}
The invariant $c_\lin(\cdot)\,:\,\Mod(G_K,\O_K)\rightarrow\Q$ is called the \emph{linear base change conductor}.
\end{defi}

One immediately verifies that $c_\lin(\cdot)$ does not depend on the choice of $L/K$ and that $c_\lin(\cdot)$ is additive with respect to direct sums. Let us note that the domain and the target of $\phi_{M,L}$ depend, as $\O_K$-modules, on the underlying $\O_K[\Gamma]$-module structures of $M$ and $\O_L$ only, while the map $\phi_{M,L}$ itself also depends on the ring structure of $\O_L$.

\subsection{The nonlinear base change conductor}\label{nonlinpartsec}

Let us now assume that the field $K$ is complete discretely valued and that $\chr\kappa_K=p>0$. Then the subgroup $1+t\O_L[[t]]\subseteq\O_L[[t]]^\times$ carries a natural $\Z_p$-module structure. For any finite field extension $L/K$, we consider the natural short exact sequence of $\Z_p$-modules
\[
1\rightarrow 1+t^2\O_L[[t]]\rightarrow 1+t\O_L[[t]]\overset{\red}{\rightarrow}\O_L\rightarrow 0\;,
\]
where $\red$ is the $\Z_p$-linear homomorphism that reduces formal power series in $t$ modulo $t^2$. If $L/K$ is a Galois extension and if $M$ is any object in $\Mod(G_K,\Z_p)$ that is split by $L/K$, then after tensoring with $M$ and taking $\Gamma=\Gal(L/K)$-invariants, we obtain a long exact sequence of $\Z_p$-modules
\[
\xymatrix{
0\ar[r]&(M\otimes_{\Z_p}(1+t^2\O_L[[t]]))^\Gamma\ar[r]& (M\otimes_{\Z_p}(1+t\O_L[[t]])))^\Gamma\ar[r]^(0.63){\textup{red}(M)}&(M\otimes_{\Z_p}\O_L)^\Gamma\\
{\;\;}\ar[r]^(0.2)\partial&\;\;\;H^1(\Gamma,M\otimes_{\Z_p}(1+t^2\O_L[[t]])\ar[r]&\cdots\;.&&
}
\]

In order to define $c_\nonlin(M)$, we need the following maybe surprising fact:

\begin{lem}\label{welldeflem}
For $M\in\Mod(G_K,\Z_p)$ split by a finite extension $L/K$ with Galois group $\Gamma$, $\im\red(M)$ is an $\O_K$-submodule of finite index of $\Hom_{\Z_p}(M,\O_L)^\Gamma$.
\end{lem}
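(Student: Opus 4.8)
First I would unwind what the map $\red(M)$ actually does. Choosing a $\Z_p$-basis of $M$ on which $\Gamma$ acts through a matrix representation, an element of $(M\otimes_{\Z_p}(1+t\O_L[[t]]))^\Gamma$ is a $\Gamma$-invariant vector of units $1+\ell_i t+O(t^2)$, and $\red(M)$ records the vector $(\ell_i)\in M\otimes_{\Z_p}\O_L=\Hom_{\Z_p}(M^\vee,\O_L)$; identifying $M$ with its double dual, the target is $\Hom_{\Z_p}(M,\O_L)^\Gamma$ as in the statement. The two things to prove are: (a) $\im\red(M)$ is stable under multiplication by $\O_K$, and (b) it has finite index in $\Hom_{\Z_p}(M,\O_L)^\Gamma$, equivalently it is a full-rank $\Z_p$-sublattice.

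\textbf{Step 1: the $\O_K$-module structure.} The key point is that $1+t\O_L[[t]]$ carries more structure than just a $\Z_p$-module: for $a\in\O_K$ (or even $\O_L$), one can exponentiate, but more robustly, the subgroup $1+t\O_L[[t]]$ is a module over the ring $\O_L$ via the "big Witt vector'' / power-operation structure, and in particular substituting $t\mapsto$ something does not help; instead I would use that the filtration piece $1+t\O_L[[t]]$ sits inside $\O_L[[t]]^\times$ and that for $c\in\O_K$ the operation $u\mapsto u^{(c)}$ defined on a pro-$p$ group by the $\Z_p$-action extends along $\Z_p\to\O_K$ only if $\O_K$ acts. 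The clean way: $1+t\O_L[[t]]\cong\prod_{i\ge1}\O_L$ as $\Z_p$-modules is false as $\O_K$-modules, but the \emph{associated graded} for the $t$-adic filtration is $\bigoplus_{i\ge1}\O_L$, which \emph{is} an $\O_K$-module, and $\red(M)$ factors through the degree-one graded piece. So I would instead argue directly: given a $\Gamma$-invariant unit vector $u=1+\ell t+\cdots$ with $\red(M)(u)=\ell$, and given $a\in\O_K$, I want a $\Gamma$-invariant unit vector reducing to $a\ell$. Since $a\in\O_K=\O_L^\Gamma$ and $\O_L$ is a complete DVR of mixed or equal characteristic $p$, the element $(1+\ell t)^{a}$ makes sense via the binomial series over $\Z_p$ when $a\in\Z_p$; for general $a\in\O_K$ one uses that $1+t\O_L[[t]]$ is \emph{$t$-adically complete} and the exponential/logarithm or a direct successive-approximation argument produces $u^{(a)}\in 1+t\O_L[[t]]$ with $\red(u^{(a)})=a\cdot\red(u)$, and $\Gamma$-invariance is preserved since $a$ is $\Gamma$-fixed. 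This is the step I expect to require the most care — making precise the $\O_K$-power operation on $1+t\O_L[[t]]$ and checking it is $\Gamma$-equivariant and reduces correctly mod $t^2$ — and it is probably cleanest to phrase it as: $1+t\O_L[[t]]$ is a topological $\O_L$-module under $\widehat{\bigotimes}$, or simply to invoke that for a $p$-adically and $t$-adically complete ring the group $1+t\O_L[[t]]$ is a module over $\O_L$ in a functorial way compatible with the reduction to $\O_L$. Earlier in the paper this is implicitly what gives $c_\lin$ its $\O_K$-linearity, so I would cite that framework.

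\textbf{Step 2: finite index.} For full rank, I would tensor the defining short exact sequence with $M$, take $\Gamma$-invariants, and observe from the long exact sequence that $\ker\red(M)=(M\otimes_{\Z_p}(1+t^2\O_L[[t]]))^\Gamma$ and $\coker\red(M)\hookrightarrow H^1(\Gamma,M\otimes_{\Z_p}(1+t^2\O_L[[t]]))$. Now $M\otimes_{\Z_p}(1+t^2\O_L[[t]])$ is a pro-$p$ abelian group on which $\Gamma$ acts, and since $|\Gamma|=e_{L/K}f_{L/K}$ may be divisible by $p$ this $H^1$ need not vanish, but it is killed by $|\Gamma|$; more importantly, after inverting $p$ (equivalently, tensoring with $\Q_p$) the functor $1+t\O_L[[t]]\mapsto$ its $\Q_p$-vectorization is, via the $t$-adic logarithm, isomorphic to $t\,\O_L[[t]]\otimes\Q_p\cong\prod_{i\ge1}L$, so taking $\Gamma$-invariants and reducing mod $t^2$ becomes the surjection $\prod_{i\ge1}(M\otimes L)^\Gamma\twoheadrightarrow(M\otimes L)^\Gamma$ onto the degree-one factor — in particular $\red(M)\otimes\Q_p$ is surjective onto $\Hom_{\Q_p}(M,L)^\Gamma=\Hom_{\Z_p}(M,\O_L)^\Gamma\otimes\Q_p$. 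Hence $\im\red(M)$ spans, so it is a finite-index $\Z_p$-submodule of the target; combined with Step 1 it is an $\O_K$-submodule. I would present Step 2 first (it is the easy half, pure linear algebra over $\Q_p$ plus the observation that $H^1$ of a finitely generated $\Z_p$-module is finite), and Step 1 second as the substantive point, closing by noting that finite index as $\Z_p$-modules together with $\O_K$-stability gives the claim.
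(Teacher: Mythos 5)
Your Step 2 (finite index) is essentially sound, and its shape is similar in spirit to what one might write out: from the long exact sequence, $\coker\red(M)$ injects into $H^1(\Gamma,M\otimes_{\Z_p}(1+t^2\O_L[[t]]))$, which is killed by $|\Gamma|$; being a quotient of the finitely generated $\O_K$-module $\Hom_{\Z_p}(M,\O_L)^\Gamma$, the cokernel then has finite $\O_K$-length. (You should be careful to phrase this with $\O_K$-length, not $\Z_p$-index, since $K$ need not be finite over $\Q_p$ — but that requires the $\O_K$-module structure, which is Step 1.)

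The real problem is Step 1, and the paper itself flags this in Remark~\ref{welldefrem}: the lemma ``seems to be difficult to verify by elementary means.'' Your proposal hinges on an $\O_K$-power operation $u\mapsto u^{(a)}$ on $1+t\O_L[[t]]$ (or, more broadly, an $\O_K$-module structure compatible with $\red$). This does not exist. The group $1+t\O_L[[t]]$ carries a $\Z_p$-action because it is pro-$p$, but the binomial series $(1+\ell t)^a=\sum\binom{a}{n}\ell^nt^n$ for $a\in\O_K\setminus\Z_p$ has coefficients $\binom{a}{n}$ that in general leave $\O_L$ (already $\binom{\omega}{3}\notin\Z_3[\omega]$ for a primitive cube root of unity $\omega$, $p=3$), and neither $\exp/\log$ nor the big Witt ring structure rescue this: $\log$ and $\exp$ introduce $p$-adic denominators, and there is no ring homomorphism $\O_L\to W(\O_L)$ (the Teichm\"uller map is multiplicative, not additive), so $W(\O_L)$ is not an $\O_L$-algebra. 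Your claim that this is what gives $c_\lin$ its $\O_K$-linearity is also a misattribution: $c_\lin$ is $\O_K$-linear because its input lives in $\Mod(G,\O_K)$ and thus already carries an $\O_K$-structure; here $M$ lives only in $\Mod(G_K,\Z_p)$, which is precisely why the $\O_K$-stability of $\im\red(M)$ is nontrivial.

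Geometrically, $\im\red(M)$ is the set of tangent vectors at the identity section that lift to formal arcs of the $\Gamma$-invariant subscheme of the Weil restriction of a split formal torus. Since that invariant subscheme may be \emph{singular} (when $p\mid|\Gamma|$), the image of the tangent map from its arc space has no a priori reason to be a linear subspace, let alone an $\O_K$-sublattice. The paper's proof routes around this by reducing (via Lemma~\ref{karoubilem}) to $M$ a direct summand of an object coming from $\Mod(G_K,\Z)$, i.e.\ from an honest algebraic $K$-torus $T$, and then invoking de Shalit's recipe (\cite{chaiyudeshalit}~Prop.~A1.7): $\im\red(M)$ is identified with the image of $\Lie(T^{\mathrm{NR}})$ under the N\'eron smoothening morphism $T^{\mathrm{NR}}\to(\Res_{\O_L}T_L^{\mathrm{NR}})^\Gamma$, and it is the N\'eron smoothening that supplies the smooth model whose Lie algebra is an $\O_K$-lattice of finite index. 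The general case is then obtained by cutting the $T$-case with an idempotent and verifying that the kernel of the idempotent intersects $\im\red(N)$ in exactly $\im\red(M)$. Without this geometric input (or some substitute for it), Step 1 is a genuine gap.
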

\begin{proof}
If $M=\tilde{M}\otimes_\Z\Z_p$ for an object $\tilde{M}\in\Mod(\Gamma,\Z)$, then the desired statement holds by \cite{chaiyudeshalit} Prop.\ A1.7: indeed, let $T$ denote the $K$-torus corresponding to $\tilde{M}$, let $T^\NR$ denote its Néron model, and let $T_L^\NR$ denote the Néron model of $T_L:=T\otimes_KL$; then $\Hom_{\Z_p}(M,\O_L)^\Gamma$ is canonically identified with the Lie algebra of $(\Res_{\O_L}(T_L)^\NR)^\Gamma$ such that $\im\red(M)$ coincides with the image of the Lie algebra of $T^\NR$ under the smoothening morphism 
\[
T^\NR\rightarrow(\Res_{\O_L}(T_L)^\NR)^\Gamma\;;
\]
hence $\im\red(M)$ is, in this case, an $\O_K$-submodule of finite index, as desired. In general, by Lemma \ref{karoubilem}, we may write $M$ as a direct summand of an object $N$ in $\Mod(G_K,\Z)\otimes_\Z\Z_p$; by what we have shown so far, the desired statement holds for $N$. Let $e$ be an idempotent endomorphism of $N$ such that $M=\ker e$; then $e$ induces idempotent endomorphisms $e_1$ of $\Hom_{\Z_p}(N,1+t\O_L[[t]])^\Gamma$ and $e_2$ of $\Hom_{\Z_p}(N,\O_L)^\Gamma$ respectively that commute with $\red(N)$ such that
\begin{eqnarray*}
\Hom_{\Z_p}(M,1+t\O_L[[t]])^\Gamma&=&\ker e_1\,\\
\Hom_{\Z_p}(M,\O_L)^\Gamma&=&\ker e_2\;.
\end{eqnarray*}
The endomorphism $e_2$ is $\O_K$-linear; indeed, the functor $\Hom$ and the functor of taking $\Gamma$-invariants commute with direct sums. Since $\im\red(N)$ is an $\O_K$-submodule of finite index, it suffices to show that $\im\red(M)$ coincides with $(\im\red(N))\cap\ker e_2$. The inclusion $\subseteq$ is trivial, so let us consider an element $y\in\ker e_2$ for which there exists an element $x$ such that $y=\red(N)(x)$. Then $x'=x-e_1(x)$ lies in the kernel of $e_1$, and it suffices to show that $\red(M)(x')=y$; however, $\red(M)(x')=y-\red(M)(e_1(x))=y-e_2(\red(M)(x))=y-e_2(y)=y$, as desired.
\end{proof}

\begin{remark}\label{welldefrem}
Lemma \ref{welldeflem} seems to be difficult to verify by elementary means, i.e.\ without using the theory of Néron lft-models for algebraic tori. The reason lies in the fact that the invariant subscheme of the Weil restriction of the Néron model of a split torus may be singular, so a description of its arc spaces requires the Néron smoothening process.
\end{remark}

\begin{defi}\label{wildbcdefi}
In the above setting, we now define
\begin{eqnarray*}
c_\nonlin(M)&:=&\length_{\O_K}\Hom_{\Z_p}(M,\O_L)^\Gamma/\im \red(M)\\
&=&\length_{\O_K}\Hom_{\Z_p}(M,\O_L)^\Gamma/\ker\partial\;.
\end{eqnarray*}
\end{defi}

\begin{defi}
In the above setup, if $M$ is an object in $\Mod(G_K,\Z_p)$, then the \emph{Galois-theoretic base change conductor} $c(M)$ of $M$ is defined as the sum
\[
c_\Gal(M)\,:=\,c_\lin(M)\,+\,c_\nonlin(M)\;.
\]
\end{defi}

\begin{defi}\label{liealgdefi}
If $M$ is in $\Mod(G_K,\Z_p)$, we define the Galois-theoretic Lie algebra of the Néron model of $M$ to be
\[
\Lie_\Gal(M^\NR):=\im\red(X_*(T))\;;
\]
if $L/K$ is a finite Galois splitting extension for $M$, then $\phi_{M,L}$ induces a natural $\O_L$-linear morphism
\[
\tilde{\phi}_{M,L}\,:\,\Lie_\Gal(M^\NR)\otimes_{\O_K}\O_L\rightarrow\Lie_\Gal(M_L^\NR)\;,
\]
and we observe that $c_\Gal(M)$ coincides with the $\O_K$-valuation of the $\O_L$-determinant of this map. \end{defi}

Let us emphasize that we defined $\Lie_\Gal(M^\NR)$ without saying anything about the symbol $M^\NR$.

%\begin{remark}\label{bcdecrem}
%It is clear from this description that if $T$ is an analytic $K$-torus and if $L/K$ is a finite field extension, then $c(T\otimes_KL)\leq c(T)$; that is, the base change conductor can at most decrease under base extension.
%\end{remark}
%\begin{remark}
%This base change conductor coincides with the base change conductor that can be defined geometrically in terms of formal Néron models. Indeed, if $M=N_{\Z_p}$ for an algebraic $K$-torus $N$, then the formal Néron model of $M$ (with its base change map) is obtained from the Néron model of $N$ via formal completion; in the general case, when $M$ is a direct summand of $N_{\Z_p}$, de Shalit's recipe carries over because the Lie algebra of the formal Néron model of $M$ is cut out within the Lie algebra of the formal Néron model of $N_{\Z_p}$ via the corresponding idempotent endomorphism.
%\end{remark}
%
%\begin{remark}
%Let us note that while the (multiplicative) abelian group $1+t\O_L[[t]]$ is a $\Z_p$-module, it is \emph{not} an $\O_K$-module, which is why we do not attempt do extend $c_\nonlin(\cdot)$ to an additive invariant on $\Mod(K,\O_K)$.
%\end{remark}

\subsection{Relation to geometric base change conductors and isogeny invariance}\label{reltogeomsec}

As above, let us assume that $K$ is complete discretely valued and that $\chr\kappa_K=p>0$. If $T$ is an algebraic $K$-torus, we write $c(T)$ to denote the base change conductor of $T$ which is defined, geometrically, via algebraic Néron models, cf.\ \cite{chaiyudeshalit} \S 10.
%The natural equivalence between $\Mod(G_K,\Z)$ and the category $\Tori_K$ of algebraic $K$-tori yields a geometric base change conductor $c_\textup{geom}(\cdot)$ on  (cf.\ \cite{chaiyudeshalit} \S 10).

\begin{prop}\label{toricompprop}
If $T$ is an algebraic $K$-torus, then
\[
c(T)\,=\,c_\Gal(X_*(T)\otimes_\Z\Z_p)\;,
\]
\end{prop}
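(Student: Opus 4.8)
The plan is to reduce both sides of the asserted equality to the $\O_K$-valuation of the $\O_L$-determinant of one and the same canonical comparison map of N\'eron Lie algebras, invoking de Shalit's recipe \cite{chaiyudeshalit} A1.7 and A1.9 in precisely the form already used in the proof of Lemma \ref{welldeflem}. Fix a finite Galois extension $L/K$ that splits $T$, put $\Gamma=\Gal(L/K)$, and set $M=X_*(T)\otimes_\Z\Z_p\in\Mod(G_K,\Z_p)$, which is split by $L/K$. By the very definition of the base change conductor (\cite{chaiyudeshalit} \S 10), $c(T)=\nu_K(\det_{\O_L}\iota_T)$, where $\iota_T\colon\Lie(T^\NR)\otimes_{\O_K}\O_L\to\Lie(T_L^\NR)$ is the map furnished by the N\'eron mapping property, and this quantity does not depend on $L$.

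Next I would install the dictionary of \cite{chaiyudeshalit} A1.7, exactly as recalled inside the proof of Lemma \ref{welldeflem}: the $\Z_p$-module $\Hom_{\Z_p}(M,\O_L)^\Gamma$ is canonically the Lie algebra of $(\Res_{\O_L}(T_L^\NR))^\Gamma$; the truncation $\red(M)$ is identified with the map on Lie algebras induced by the N\'eron smoothening $T^\NR\to(\Res_{\O_L}(T_L^\NR))^\Gamma$; hence $\Lie_\Gal(M^\NR)=\im\red(M)$ corresponds to the image of $\Lie(T^\NR)$ inside $\Hom_{\Z_p}(M,\O_L)^\Gamma$, and likewise $\Lie_\Gal(M_L^\NR)=\Hom_{\Z_p}(M,\O_L)$ corresponds to $\Lie(T_L^\NR)$. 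Under these identifications the $\O_L$-linear map $\tilde\phi_{M,L}$ of Definition \ref{liealgdefi}, which is assembled from the multiplication map $\phi_{M,L}$ recording the $\O_L$-structure of the Weil restriction, becomes $\iota_T$; this is the substance of \cite{chaiyudeshalit} A1.9, in which $c(T)$ is evaluated as the sum of a linear term $\nu_K(\det_{\O_L}\phi_{M,L})=c_\lin(M)$ and a nonlinear term $\length_{\O_K}(\Hom_{\Z_p}(M,\O_L)^\Gamma/\im\red(M))=c_\nonlin(M)$. Invoking Definition \ref{liealgdefi} then yields $c_\Gal(M)=\nu_K(\det_{\O_L}\tilde\phi_{M,L})=\nu_K(\det_{\O_L}\iota_T)=c(T)$, as desired.

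The principal obstacle is this identification step: one must verify that the purely algebraic pair $(\red(M),\phi_{M,L})$ genuinely reconstructs de Shalit's two-stage computation of $\iota_T$ --- that the ``modulo $t^2$'' description of the smoothened arc space in \cite{chaiyudeshalit} A1.7 coincides with our $\red(M)$, and that de Shalit's factorisation of the Lie-algebra comparison through $\Gamma$-invariants uses exactly the map $\phi_{M,L}$. Once this is done the proposition is formal. Two routine points should also be recorded: passing from the $\Z$-lattice $X_*(T)$ to $M=X_*(T)\otimes_\Z\Z_p$ alters neither $\Hom(-,\O_L)^\Gamma$ (already a $\Z_p$-module, since $\O_L$ is one) nor $\im\red$, so the $\Z_p$-coefficient definitions of Section \ref{bccondsec} do compute the classical invariant; and the whole comparison is independent of the chosen splitting extension $L/K$, which we have used on the $c(T)$ side and which on the $c_\Gal$ side is built into the definitions of $c_\lin$ and $c_\nonlin$.
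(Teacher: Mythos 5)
Your proposal is correct and follows essentially the same route as the paper's proof, which is the single sentence ``Indeed, this follows directly from de Shalit's recipe for the Lie algebra of the N\'eron model of an algebraic $K$-torus, cf.\ \cite{chaiyudeshalit} A1, in particular Prop.\ A1.7.'' You have unpacked what the authors left implicit — identifying $\tilde\phi_{M,L}$ with the N\'eron base-change map via the dictionary already set up in Lemma \ref{welldeflem} and Definition \ref{liealgdefi}, and noting that passing from $\Z$- to $\Z_p$-coefficients is harmless because $\O_L$ is already a $\Z_p$-module — which is a faithful elaboration, not a different argument.
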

\begin{proof}
Indeed, this follows directly from de Shalit's recipe for the Lie algebra of the Néron model of an algebraic $K$-torus, cf.\ \cite{chaiyudeshalit} A1, in particular Prop.\ A1.7.
\end{proof}

\begin{cor}\label{isoginvcor}
If the residue field of $K$ is perfect, then 
\[
c_\Gal(\cdot):\Mod(G_K,\Z_p)\rightarrow\Q
\]
is an isogeny invariant.
\end{cor}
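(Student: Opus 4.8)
The plan is to bootstrap isogeny invariance of $c_\Gal(\cdot)$ on $\Mod(G_K,\Z_p)$ from the already known isogeny invariance of the classical base change conductor on the category $\Tori_K$ of algebraic $K$-tori, which is the main result of \cite{chaiyudeshalit}.

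Given an isogeny $\phi\colon M\to N$ in $\Mod(G_K,\Z_p)$, I would first invoke the last assertion of Lemma \ref{karoubilem}, applied with $R=\Z$ and $A=\Z_p$ (here $\Z$ is dense in $\Z_p$, both are Dedekind domains of characteristic zero, and $\Z_p$ is local): this produces an object $T$ of $\Mod(G_K,\Z_p)$ together with an isogeny $\tilde\psi\colon\tilde M\to\tilde N$ in $\Mod(G_K,\Z)$ such that $\tilde\psi\otimes_\Z\Z_p$ is identified with $\phi\oplus\id_T$ under isomorphisms $\tilde M\otimes_\Z\Z_p\cong M\oplus T$ and $\tilde N\otimes_\Z\Z_p\cong N\oplus T$. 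Since $c_\lin(\cdot)$ and $c_\nonlin(\cdot)$, hence $c_\Gal(\cdot)$, are additive for direct sums, I would then observe that
\[
c_\Gal(M)-c_\Gal(N)\;=\;c_\Gal(M\oplus T)-c_\Gal(N\oplus T)\;=\;c_\Gal(\tilde M\otimes_\Z\Z_p)-c_\Gal(\tilde N\otimes_\Z\Z_p)\;,
\]
so that it suffices to prove the claim for isogenies arising from $\Mod(G_K,\Z)$.

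For the remaining case, I would use the equivalence $\Mod(G_K,\Z)\cong\Tori_K$ of Remark \ref{torirem} to regard $\tilde M$ and $\tilde N$ as the cocharacter modules of algebraic $K$-tori $S$ and $S'$, with $\tilde\psi$ corresponding to an isogeny $S\to S'$ (the equivalence matches isogenies on both sides, since a morphism of $\Z$-lattices becomes an isomorphism after $\otimes_\Z\Q$ exactly when it is injective with finite cokernel). By Proposition \ref{toricompprop} one has $c_\Gal(\tilde M\otimes_\Z\Z_p)=c(S)$ and $c_\Gal(\tilde N\otimes_\Z\Z_p)=c(S')$, and since $\kappa_K$ is perfect the main result of \cite{chaiyudeshalit} shows that $c(\cdot)$ is an isogeny invariant on $\Tori_K$, giving $c(S)=c(S')$ and hence $c_\Gal(M)=c_\Gal(N)$.

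I expect the only delicate point to be the reduction step: one needs Lemma \ref{karoubilem} to supply an honest isogeny in $\Mod(G_K,\Z)$ whose base change recovers $\phi\oplus\id_T$, not merely separate $\Z$-structures on $M\oplus T$ and $N\oplus T$. This is exactly what the last sentence of that lemma guarantees, so once it is in hand the rest is formal, resting only on additivity of $c_\Gal$ and on the comparison of $c_\Gal$ with the geometric base change conductor of algebraic tori in Proposition \ref{toricompprop}.
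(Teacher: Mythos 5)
Your proof is correct and follows essentially the same approach as the paper: use the last assertion of Lemma \ref{karoubilem} to reduce to an isogeny coming from $\Mod(G_K,\Z)$, then transfer via Proposition \ref{toricompprop} to the classical base change conductor of algebraic tori and invoke the isogeny invariance established in \cite{chaiyudeshalit}. You spell out the additivity step and the matching of isogenies under the equivalence $\Mod(G_K,\Z)\cong\Tori_K$ a bit more explicitly than the paper does, but the argument is the same.
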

\begin{proof}
By Theorems 11.3 and 12.1 of \cite{chaiyudeshalit}, the base change conductor $c(\cdot)$ is an isogeny invariant on $\Tori_K$ when the residue field of $K$ is perfect. If $\phi:M\rightarrow N$ is an isogeny in $\Mod(G_K,\Z_p)$, then by Lemma \ref{karoubilem} there exists an object $L\in\Mod(G_K,\Z_p)$ such that $\phi\oplus \id_L$ extends to an isogeny in $\Mod(G_K,\Z)$, and with Proposition \ref{toricompprop} we conclude that $c_\Gal(M)+c_\Gal(L)=c_\Gal(N)+c_\Gal(L)$, which implies $c_\Gal(M)=c_\Gal(N)$, as desired.
\end{proof}

\begin{remark}
The seemingly simple invariant $c_\lin(\cdot)$ is \emph{not} an isogeny invariant on $\uTor_K$, as we can see from the following example: let us set $p=2$, $K=\Q_2$ and $L=\Q_2(\sqrt{2})$; then $\Gamma=\Gal(L/K)=\Z/2\Z$, where the action of the nontrivial element $\gamma\in\Gamma$ on $L$ is given by $\sqrt{2}\mapsto -\sqrt{2}$, and $\O_L=\Z_2[\sqrt{2}]$. Let $M=\Z[\Gamma]$ be the regular representation of $\Gamma$ over $\Z$, and let $N=1\cdot\Z\oplus \chi\cdot\Z$ be the sum of its characters, where $\chi(\gamma)=-1$. Then $M$ and $N$ are isogenous, but non-isomorphic over $\Z$ or over $\Z_2$. Since Weil restriction preserves Néron models (cf.\ \cite{blr} 7.6/6), $c_\nonlin(M)=0$, and hence formula \ref{indprop} shows that
\[
c_\lin(M)\,=\,c(M)=\frac{1}{2}\nu_{\Q_2}(\mathfrak{d}_{\Q_2(\sqrt{2})/\Q_2)})\;.
\]
On the other hand,
\[
c_\lin(N)=c(\chi)=\frac{1}{2}\length_{\O_L}(\O_L/\sqrt{2}\O_L)\,=\,\frac{1}{2}\;,
\]
because $(\chi\cdot\Z_2\otimes_{\Z_2}\O_L)^\gamma=\sqrt{2}\O_K$. It follows that $c_\lin(M)\neq c_\lin(N)$; corollary \ref{isoginvcor} now implies that $c_\nonlin(\cdot)$ is not an isogeny invariant either.
\end{remark}

\begin{cor}\label{unrambccor}
If $L/K$ is a finite Galois extension, if $K^\ur\subseteq L$ denotes the maximal subextension of $L/K$ that is unramified over $K$ and if $M\in\Mod(G_K,\Z_p)$, then
\[
c_\Gal(M)\,=\,c_\Gal(\Res_{G_{K^\ur}}M)\;.
\]
\end{cor}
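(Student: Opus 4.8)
The plan is to use the determinant description of $c_\Gal$ from Definition \ref{liealgdefi} to reduce the statement to a base-change compatibility for $\Lie_\Gal(\cdot^\NR)$, which one verifies first for algebraic tori and then transports through the pseudo-abelian envelope. Since $c_\Gal$ is independent of the chosen finite Galois splitting extension, it suffices to prove $c_\Gal(M)=c_\Gal(\Res_{G_{K'}}M)$ whenever $K'/K$ is a finite \emph{unramified} Galois extension and $M\in\Mod(G_K,\Z_p)$; the corollary is the case $K'=K^\ur$. Fix such $K'$ and $M$, choose a finite Galois extension $L/K$ that contains $K'$ and splits $M$, and put $\Gamma=\Gal(L/K)$, $\Gamma'=\Gal(L/K')\triangleleft\Gamma$, $\bar\Gamma=\Gamma/\Gamma'=\Gal(K'/K)$. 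By Definition \ref{liealgdefi}, $c_\Gal(M)=\nu_K(\det_{\O_L}\tilde{\phi}_{M,L})$ and $c_\Gal(\Res_{G_{K'}}M)=\nu_{K'}(\det_{\O_L}\tilde{\phi}_{\Res_{G_{K'}}M,L})$; the two maps $\tilde{\phi}$ have the common target $\Hom_{\Z_p}(M,\O_L)=\Lie_\Gal(M_L^\NR)$ since $(\Res_{G_{K'}}M)_L=\Res_{G_L}M=M_L$, and $\nu_{K'}$ restricts to $\nu_K$ on $L$ because $K'/K$ is unramified. So everything reduces to the equality of $\O_{K'}$-submodules of $\Hom_{\Z_p}(M,\O_L)^{\Gamma'}$
\[
\Lie_\Gal\bigl((\Res_{G_{K'}}M)^\NR\bigr)\;=\;\Lie_\Gal(M^\NR)\otimes_{\O_K}\O_{K'}\,,
\]
the right-hand side being formed via the identification $\Hom_{\Z_p}(M,\O_L)^{\Gamma'}=\Hom_{\Z_p}(M,\O_L)^{\Gamma}\otimes_{\O_K}\O_{K'}$ coming from finite Galois descent along $\O_K\hookrightarrow\O_{K'}$ (group $\bar\Gamma$) applied to the finite free $\O_{K'}$-module $\Hom_{\Z_p}(M,\O_L)^{\Gamma'}$ with its residual semilinear $\bar\Gamma$-action. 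Granting this, tensoring with $\O_L$ over $\O_{K'}$ identifies the sources of $\tilde{\phi}_{M,L}$ and $\tilde{\phi}_{\Res_{G_{K'}}M,L}$ — which have the same target and are both the canonical multiplication map — so these maps agree, as do their $\O_L$-determinants, and we are done.

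First I would prove the displayed equality for $M=X_*(T)\otimes_\Z\Z_p$ with $T$ an algebraic $K$-torus split by $L/K$. By the argument in the proof of Lemma \ref{welldeflem} (that is, \cite{chaiyudeshalit} Prop.\ A1.7), $\Lie_\Gal(M^\NR)$ is the image of $\Lie(T^\NR)$ under the smoothening morphism $T^\NR\to(\Res_{\O_L/\O_K}T_L^\NR)^{\Gamma}$, viewed inside $\Lie((\Res_{\O_L/\O_K}T_L^\NR)^{\Gamma})=\Hom_{\Z_p}(M,\O_L)^{\Gamma}$, and similarly $\Lie_\Gal((\Res_{G_{K'}}M)^\NR)$ is the image of $\Lie((T_{K'})^\NR)$ under $(T_{K'})^\NR\to(\Res_{\O_L/\O_{K'}}T_L^\NR)^{\Gamma'}$. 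N\'eron lft-models of tori commute with the finite \'etale base change $\O_K\to\O_{K'}$ (cf.\ \cite{blr}), so $(T_{K'})^\NR=T^\NR\otimes_{\O_K}\O_{K'}$; the formation of $(\Res_{\O_L/\O_K}T_L^\NR)^{\Gamma}$ is compatible with this base change (transitivity of Weil restriction and Galois descent of invariants — on Lie algebras this is exactly $\Hom_{\Z_p}(M,\O_L)^{\Gamma}\otimes_{\O_K}\O_{K'}=\Hom_{\Z_p}(M,\O_L)^{\Gamma'}$); and, by uniqueness in the N\'eron mapping property, the smoothening morphism over $\O_{K'}$ is the base change of the one over $\O_K$. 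Taking Lie algebras and using that $\cdot\otimes_{\O_K}\O_{K'}$ is flat, hence commutes with forming images of module maps, gives the displayed equality in this case.

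For a general $M$ I would argue as in the second half of the proof of Lemma \ref{welldeflem}. By Lemma \ref{karoubilem} applied to the finite group $\Gamma$, we may write $M=\ker e$ for an idempotent $e$ of $N=\tilde{N}\otimes_\Z\Z_p$ with $\tilde{N}\in\Mod(\Gamma,\Z)$; in particular $N$ is split by $L/K$ and arises from an algebraic $K$-torus split by $L/K$, so the previous paragraph applies to $N$. Now $e$ induces the $\O_K$-linear idempotent $e_2$ of $\Hom_{\Z_p}(N,\O_L)^{\Gamma}$ with $\ker e_2=\Hom_{\Z_p}(M,\O_L)^{\Gamma}$ and $\Lie_\Gal(M^\NR)=\Lie_\Gal(N^\NR)\cap\ker e_2$, while over $K'$ the same $e$ induces the idempotent $e_2'=e_2\otimes_{\O_K}\id_{\O_{K'}}$ of $\Hom_{\Z_p}(N,\O_L)^{\Gamma'}$, so that $\ker e_2'=(\ker e_2)\otimes_{\O_K}\O_{K'}$ and $\Lie_\Gal((\Res_{G_{K'}}M)^\NR)=\Lie_\Gal((\Res_{G_{K'}}N)^\NR)\cap\ker e_2'$. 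As the displayed equality holds for $N$ and flat base change commutes with intersections of submodules, it holds for $M$, which finishes the proof.

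The main obstacle is the algebraic case: one must check that the N\'eron-model description of $\Lie_\Gal(M^\NR)$ is compatible with the base change $\O_K\to\O_{K'}$ in \emph{each} ingredient — the N\'eron lft-model, the Weil restriction, the passage to $\Gamma$- resp.\ $\Gamma'$-invariants, and the N\'eron smoothening — everything else being flat base change and finite Galois descent. (Alternatively, inflation--restriction shows that the displayed equality follows from the vanishing of $H^1(\bar\Gamma,\Hom_{\Z_p}(M,1+t^2\O_L[[t]])^{\Gamma'})$, which holds because, through its $t$-adic filtration, that module is a limit of iterated extensions of induced $\bar\Gamma$-modules — a consequence of the normal integral basis theorem for the unramified extension $K'/K$.)
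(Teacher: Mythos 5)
Your proof is correct, but it takes a genuinely different route from the one in the paper. You prove the sharper local statement that, under étale descent along $\O_K\hookrightarrow\O_{K^\ur}$, the lattice $\Lie_\Gal(M^\NR)\otimes_{\O_K}\O_{K^\ur}$ is \emph{equal} to $\Lie_\Gal((\Res_{G_{K^\ur}}M)^\NR)$ inside $\Hom_{\Z_p}(M,\O_L)^{\Gamma'}$, first for tori (via compatibility of N\'eron lft-models, Weil restriction, invariants and smoothening with the finite \'etale base change $\O_K\to\O_{K^\ur}$) and then for general $M$ by splitting off an idempotent and using that flat base change commutes with kernels and intersections. The paper instead sidesteps this lattice computation entirely with a soft argument: by Lemma \ref{karoubilem} it chooses a complement $M'$ with $M\oplus M'\cong N\otimes_\Z\Z_p$ coming from an algebraic torus, deduces from Proposition \ref{toricompprop} and invariance of algebraic N\'eron models under unramified base change that $c_\Gal(M)+c_\Gal(M')=c_\Gal(\Res M)+c_\Gal(\Res M')$, and then notes that Definition \ref{liealgdefi} gives the one-sided inequality $c_\Gal(\cdot)\geq c_\Gal(\Res\,\cdot)$ (the $\Gamma$-invariant image of $\red$ lands inside the $\Gamma'$-invariant one, so the lattice can only grow under unramified restriction); the two summands in the additivity identity are then each squeezed to equalities. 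Your version buys a stronger structural conclusion (an honest identification of the integral Lie algebra lattices under \'etale base change), at the cost of carefully checking several base-change compatibilities for N\'eron-model constructions; the paper's version is shorter and avoids touching those compatibilities by exploiting additivity together with monotonicity of the conductor.
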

\begin{proof}
By Lemma \ref{karoubilem}, there exist objects $M'\in\Mod(G_K,\Z_p)$ and $N\in\Mod(G_K,\Z)$ such that
\[
M\oplus M'\cong N\otimes_{\Z}\Z_p\;.
\]
Let $T$ be an algebraic $K$-torus with $X_*(T)\cong N$; then by Proposition \ref{toricompprop},
\begin{eqnarray*}
c_\Gal(N\otimes_Z\Z_p)&=&c(T)\quad\textup{and}\\
c_\Gal(\Res_{G_{K^\ur}}N\otimes_\Z\Z_p)&=&c(T\otimes_KK^\ur)\;.
\end{eqnarray*}
Since Néron models of algebraic tori commute with unramified base change, $c(T)=c(T\otimes_KK^\ur)$, so we conclude that
\[
c_\Gal(M)+c(M')\,=\,c_\Gal(\Res_{G_{K^\ur}}M)+c_\Gal(\Res_{G_{K^\ur}}M')\;.
\]
The description of $c_\Gal(\cdot)$ given in Definition \ref{liealgdefi} shows that base change conductors can at most decrease under restriction with respect to unramified field extensions; we thus conclude that in fact $c_\Gal(M)=c_\Gal(\Res_{G_{K^\ur}}M)$, as desired.
\end{proof}

\begin{remark}If the residue field $\kappa_K$ of $K$ is algebraically closed, a semiabelian $K$-variety with potentially ordinary reduction also gives rise to an element of $\Mod(G_K,\Z_p)$. Indeed, if $L/K$ is a finite Galois extension such that $A_L$ has ordinary reduction, then the completion of $(A_L)^\NR$ along its unit section is a split formal torus whose character module $X_*(A_L^\NR|_e)$ is a finite free $\Z_p$-module and, thus, gives rise to an object of $\Mod(G_K,\Z_p)$. De Shalit's recipe \cite{chaiyudeshalit} A1.9, adapted to the construction process for the Néron model of $A$, shows that $c(A)=c_\Gal(X_*(A_L^\NR|_e)$.
\end{remark}

Let us finally clarify the relation between $c_\Gal(\cdot)$ and the geometric base change conductor $c_\textup{geom}(\cdot)$ which is defined via formal Néron models for uniformly rigid spaces. By \cite{formalnms} Cor.\ 7.17, every analytic $K$-torus admits a formal Néron model in the sense of \cite{formalnms} Def.\ 1.1. By \cite{formalnms} Prop.\ 7.2, the formal Néron model of a split uniformly rigid $K$-torus is a product of copies of the formal multiplicative group over $\O_K$ and, hence, is stable under base change. Thus, we can define the geometric base change conductor of an analytic $K$-torus by mimicking the definition which is used in the algebraic context:

\begin{defi}
If $T\in\uTor_K$ is an analytic $K$-torus and if $L/K$ is a finite field extension such that $T_L=T\otimes_KL$ splits, then the \emph{geometric base change conductor} $c_\textup{geom}(T)$ of $T$ is defined to be the rational number
\[
\frac{1}{e_{L/K}}\length_{\O_L}\frac{\Lie(T_L^\NR)}{\Lie(T^\NR)\otimes_{\O_K}\O_L}\;,
\]
where $T^\NR$ and $T_L^\NR$ are the Néron models of $T$ and $T_L$ respectively in the sense of \cite{formalnms} Def.\ 1.1 and where the inclusion of Lie algebras is given by the base change morphism $T^\NR\otimes_{\O_K}\O_L\rightarrow T_L^\NR$ corresponding to the identity on $T_L$.
\end{defi}

\begin{prop}\label{fftcompprop}
Let $T\in\uTor_K$ be an analytic $K$-torus, and let $T^\NR$ denote its formal Néron model in the sense of \cite{formalnms}. Then there is a natural identification 
\[
\Lie(T^\NR)\,\cong\,\Lie_\Gal(X_*(T)^\NR)
\]
such that the base change morphism
\[
T^\NR\otimes_{\O_K}\O_L\rightarrow T_L^\NR\;.
\]
corresponds to the morphism $\tilde{\phi}_{X_*(T),L}$ on the level of Lie algebras. In particular, 
\[
c_\textup{geom}(T)\,=\,c_\Gal(X_*(T))\;;
\]
in other words, the Galois-theoretic base change conductor computes the geometric base change conductor.% In other words, for any $T\in\uTor_K$,
%Moreover, if $T^\NR$ denotes the formal Néron model of $T$ in the sense of \cite{formalnms}, there is a natural isomorphism , and if $T$ is split by a finite Galois extension $L/K$, then the map $\tilde{\phi}_{X_*(T),L}$ is identified with the Lie algebra of the base change morphism 
%
\end{prop}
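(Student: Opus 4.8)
The plan is to construct the identification $\Lie(T^\NR)\cong\Lie_\Gal(X_*(T)^\NR)$ compatibly with base change, by interpreting both sides as arc spaces of appropriate (formal) group schemes and then matching the smoothening maps. First I would reduce to the split case after base change to a finite Galois splitting extension $L/K$: for $T_L$ split, $T_L^\NR$ is by \cite{formalnms} Prop.\ 7.2 a product of copies of $\hat{\G}_{m,\O_L}$, so $\Lie(T_L^\NR)$ is canonically $\Hom_{\Z_p}(X_*(T),\O_L)\otimes$ (identifying the Lie algebra of $\hat{\G}_m$ with $\O_L$ via the standard coordinate $1+t\mapsto\ldots$), and this matches $\Lie_\Gal(X_*(T)_L^\NR)=\Hom_{\Z_p}(X_*(T),\O_L)$ on the nose by construction. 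The content is therefore entirely in identifying the descended objects over $\O_K$, i.e.\ in showing that the formal N\'eron model $T^\NR$ of $T$ realizes, on Lie algebras, precisely the submodule $\im\red(X_*(T))\subseteq\Hom_{\Z_p}(X_*(T),\O_L)^\Gamma$.

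The key step is to recognize that, by the universal property of the formal N\'eron model (\cite{formalnms} Def.\ 1.1), $\Lie(T^\NR)$ is computed as the group of $\O_K[t]/t^2$-points of $T^\NR$ lying over the closed point, which — again by the defining smooth-model property — equals the group of $\O_K[t]/t^2$-points of the uniformly rigid space $T$ itself that extend to $\O_K$-integral points, i.e.\ the set of $K$-morphisms $\Sp\,K[t]/t^2\to T$ that are bounded. Now $T$ as a uniformly rigid space is obtained from $T_L=X_*(T)^\vee\otimes\hat{\G}_{m,L}$ by the Galois descent datum coming from the $G_K$-action on $X_*(T)$ (Remark \ref{anintrem}); concretely, a point of $T$ valued in $K[t]/t^2$ is a $\Gamma$-invariant point of $T_L$ valued in $L[t]/t^2$, and such a point, written multiplicatively in each coordinate as an element of $1+t\,L[t]/t^2$, is $\O_L$-integral if and only if it lies in $1+t\O_L[t]/t^2$. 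Tracking this through, the set of bounded $\O_K[t]/t^2$-arcs of $T$ is exactly the image of $(X_*(T)\otimes(1+t\O_L[[t]]))^\Gamma$ under reduction mod $t^2$ into $(X_*(T)\otimes\O_L)^\Gamma=\Hom_{\Z_p}(X_*(T),\O_L)^\Gamma$ — which is precisely $\im\red(X_*(T))=\Lie_\Gal(X_*(T)^\NR)$ by Definition \ref{liealgdefi}. That the base change morphism $T^\NR\otimes_{\O_K}\O_L\to T_L^\NR$ induces $\tilde\phi_{X_*(T),L}$ on Lie algebras then falls out of the same identifications, since on arc spaces it is simply the inclusion of $\Gamma$-invariant arcs into all arcs, extended $\O_L$-linearly; and the final equality $c_\textup{geom}(T)=c_\Gal(X_*(T))$ follows immediately by taking $\O_L$-determinants, comparing the two definitions.

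The main obstacle I anticipate is the careful justification that $\Lie(T^\NR)$ — defined via the formal N\'eron model's universal property in the uniformly rigid category — really does coincide with the ``bounded arcs'' description, i.e.\ that the formal N\'eron model's Lie algebra is insensitive to the subtlety that the smooth model is constructed by a N\'eron smoothening of the $\Gamma$-fixed locus of the Weil restriction (compare Remark \ref{welldefrem}, which flags exactly this point as the reason Lemma \ref{welldeflem} needs N\'eron theory). One must check that the Greenberg/arc-space functor behaves well enough with respect to smoothening that passing to $\O_K[t]/t^2$-points commutes with the smoothening morphism, so that the image computed downstairs is the same whether one uses $T^\NR$ or the singular model; this is where the weight of \cite{formalnms} (in particular its Cor.\ 7.17 on existence and its compatibility results) is really used. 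Once this comparison of arc spaces with smoothening is in hand, the rest is bookkeeping with the descent datum and the multiplicative coordinate.
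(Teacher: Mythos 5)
Your approach differs fundamentally from the paper's, and it has a genuine gap at exactly the step you flag as the ``main obstacle.'' The crux of your argument is the double identification
\[
\Lie(T^\NR)\,\overset{?}{=}\,\{\text{bounded }\O_K[t]/t^2\text{-arcs of }T\}\,\overset{?}{=}\,\im\red(X_*(T))\,,
\]
and neither equality is correct as stated. For the second: by your own descent computation, the bounded $\O_K[t]/t^2$-arcs of $T$ form $(X_*(T)\otimes(1+t\O_L[t]/t^2))^\Gamma\cong(X_*(T)\otimes\O_L)^\Gamma$, which is the \emph{target} of $\red(X_*(T))$, not its image; passing from $(X_*(T)\otimes(1+t\O_L[[t]]))^\Gamma$ to $(X_*(T)\otimes(1+t\O_L[t]/t^2))^\Gamma$ need not be surjective, because taking $\Gamma$-invariants does not commute with quotients — the defect is exactly the connecting map $\partial$ into $H^1(\Gamma,X_*(T)\otimes(1+t^2\O_L[[t]]))$, whose non-vanishing is precisely what $c_\nonlin$ measures. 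For the first: the N\'eron mapping property constrains points valued in smooth or \'etale $\O_K$-algebras, and $\O_K[t]/t^2$ is not one; the smoothening in the N\'eron model construction genuinely shrinks the Lie lattice inside $(X_*(T)\otimes\O_L)^\Gamma$ whenever $c_\nonlin>0$. Your proposed repair — that the Greenberg/arc-space functor commutes with smoothening on $\O_K[t]/t^2$-points — is exactly the assertion that fails; if it held, smoothening would not change Lie algebras, Lemma \ref{welldeflem} would be trivial, and Remark \ref{welldefrem} would have nothing to warn about.

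The paper avoids any direct arc-space computation in the uniformly rigid category. It first treats the case where $T$ comes from an algebraic torus $W$: by \cite{formalnms} Prop.\ 7.15 the formal N\'eron model of $\hat W$ is the formal completion of the algebraic N\'eron model $W^\NR$ along a closed subgroup scheme of its special fiber, so the geometric sides for $T$ and $W$ agree, while de Shalit's recipe \cite{chaiyudeshalit} A1.9 (which is exactly the careful Lie-algebra-of-the-smoothening computation your argument would need to reprove from scratch) already identifies $\Lie(W^\NR)$ with $\im\red$. The general case then follows by Lemma \ref{karoubilem}, which exhibits $T$ as a direct summand of such a $\hat W$, and by splitting off the same summand on both the geometric and the Galois-theoretic sides. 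In effect, your proposal attempts to rederive de Shalit's recipe in the uniformly rigid setting without doing the smoothening analysis, and that is precisely the point at which it breaks down.
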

\begin{proof}
Let us abbreviate the statement of the proposition by claiming that the 'geometric side' for $T$ is naturally identified with the 'Galois-theretic side'. Let us first consider the case where $T\in\uTor_K$ comes from an algebraic $K$-torus $W\in \Tori_K$, meaning that $X_*(T)\cong X_*(W)\otimes_\Z\Z_p$. Let $L/K$ be a Galois splitting extension for $W$; then by \cite{formalnms} Prop.\ 7.15, the formal Néron models of $T$ and $T_L$ as well as the relevant base change map are obtained from the corresponding data for $W$ via formal completion along closed subgroup schemes of their special fibers. Thus, the geometric sides for $T$ and for $W$ coincide. Moreover, the Galois-theoretic side for $W$ coincides with the geometric side for $W$ by de Shalit's recipe \cite{chaiyudeshalit} A1.9, and the Galois-theoretic sides for $T$ and $W$ coincide for tautological reasons. Hence, the geometric side for $T$ coincides with the Galois-theoretic side for $T$, as desired. In the general case, Lemma \ref{karoubilem} shows that $T$ is a direct factor of the analytic $K$-torus $T'$ attached to an algebraic $K$-torus. By what we have shown so far, the geometric and the Galois-theoretic sides for $T'$ are naturally identified. The geometric and the Galois-theoretic side for $T$ are obtained by splitting off the same relevant direct factor and, hence, coincide as well, which proves the assertion.
\end{proof}

In the following, we will simply write
\[
c(\cdot)\,:=\,c_\Gal(\cdot)
\]
to purge the notation; this is permissible in view of Proposition \ref{fftcompprop}. All properties of $c(\cdot)$ which we will use can be obtained from the Galois-theoretic description of $c(\cdot)$; the geometric characterization of $c(\cdot)$ is, logically, never used in this paper. However, it is important nonetheless: it tells us that $c(\cdot)$ is a canonical invariant which directly comes from geometry.

\subsection{The rational virtual base change character}\label{ratvirtsec}

If $\Gamma$ is a finite group and if $F$ is any field of characteristic zero, we write $R_F(\Gamma)$ to denote the free abelian group that is generated by the set $\Xi(\Gamma,F)$ of the characters of the finitely many isomorphism classes of irreducible $F$-rational representations of $\Gamma$; the elements of $R_F(\Gamma)$ are called the virtual $F$-rational characters of $\Gamma$. We write $R_F(\Gamma)_\Q$ for the finite-dimensional $\Q$-vector space $R_F(\Gamma)\otimes_\Z\Q$; its elements are called the rational virtual $F$-rational characters of $\Gamma$. Clearly $\Xi(\Gamma,F)$ is a $\Q$-basis for $R_F(\Gamma)_\Q$. The character of an $F$-rational representation of $\Gamma$ is a central $F$-valued function on $\Gamma$; hence $R_F(\Gamma)$ and $R_F(\Gamma)_\Q$ are naturally contained in the $F$-vector space $C(\Gamma,F)$ of central $F$-valued functions on $\Gamma$. Let us consider the natural pairing
\[
(\cdot|\cdot)_\Gamma\,:\,C(\Gamma,F)\times C(\Gamma,F)\rightarrow F\,;\,(\phi,\psi)\mapsto\frac{1}{\sharp\Gamma}\sum_{\sigma\in\Gamma}\phi(\sigma)\psi(\sigma^{-1})\;.
\]
By \cite{serre_linres} Prop.\ 32 and its proof, $(\chi|\chi')_\Gamma=0$ for distinct elements $\chi,\chi'\in \Xi(\Gamma,F)$, while $(\chi|\chi)_\Gamma=\dim_{\overline{F}}\End^\Gamma (V_i\otimes_F\overline{F})$, where $V_i$ is the irreducible element of $F[\Gamma]$ corresponding to $\chi_i$ and where $\overline{F}$ denotes an algebraic closure of $F$. In particular, we see that $(\cdot|\cdot)_\Gamma$ restricts to a perfect bilinear pairing
\[
(\cdot|\cdot)_\Gamma\,:\,R_F(\Gamma)_\Q\times R_F(\Gamma)_\Q\rightarrow\Q
\]
and that the basis $\Xi(\Gamma,F)$ of $R_F(\Gamma)_\Q$  is orthogonal with respect to this pairing. 

Let us now consider the case where $F=\Q_p$ and where $\Gamma=\Gal(L/K)$ for some finite Galois extension of our complete discretely valued field $K$, where we assume that $\kappa_K$ is perfect and that $\chr\kappa_K=p>0$. By Corollary \ref{isoginvcor}, the base change conductor $c(\cdot)$ extends from $\Mod(\Gamma,\Z_p)$ to an additive invariant on the monoid of $\Q_p$-rational representations $\Mod(\Gamma,\Q_p)$, hence on the group $R_{\Q_p}(\Gamma)$ generated by this monoid and, hence, to a $\Q$-linear form on $R_{\Q_p}(\Gamma)_\Q$. The pairing $(\cdot|\cdot)_\Gamma$ induces an isomorphism from $R_{\Q_p}(\Gamma)_\Q$ onto the $\Q$-dual space $(R_{\Q_p}(\Gamma)_\Q)^\vee$; hence there exists a unique element $c^\vee$ in $R_{\Q_p}(\Gamma)_\Q$ such that $c(\cdot)=(c^\vee|\cdot)_{\Gamma,F}^\Q$. We call $c^\vee$ the rational virtual base change character.

In the remainder of this paper, we show that $c^\vee=\overline{\bAr}^{(p)}_{L/K}$. By definition, $\overline{\bAr}^{(p)}_{L/K}\in R_{\Q_p}(F)_\Q$, so we have to show that 
\[
c(T)\,=\,(\bAr_{L/K}^{(p)}|\chi_{X^*(T)_{\Q_p}})_\Gamma\quad\quad\quad\quad(*)
\]
or, equivalently, that 
\[
c(T)=(\bAr_{L/K}|\chi_{X^*(T)_{\Q_p}})_\Gamma\;,
\]
where $X^*(T)\in\Mod(G_K,\Z_p)$, called the character module of $T$, denotes the contragredient dual of $X_*(T)$.

\subsection{Behavior of the base change conductor under Weil restriction}\label{weilressec}
By Frobenius reciprocity, the desired equality ($*$) above would imply that the behavior of $c(\cdot)$ under induction on the level of character modules is described by a formula that is an analog of the formula in the statement of Proposition \ref{barresprop}. We shall prove this now, thereby providing a step in the proof of ($*$). As before, let $K$ be a complete discretely valued field with $\chr\kappa_K=p>0$. 
\begin{defi}
If $L/K$ is a finite field extension and if $T\in\uTor_L$, we define $\Res_{L/K}T\in\uTor_K$ via
\[
X^*(\Res_{L/K}T)\,:=\,\Ind^{G_L}_{G_K}X^*(T)\,:=\,\Ind^{G_L/U}_{G_K/U}X^*(T)\;,
\]
where $U\subseteq G_L\subseteq G_K$ is any normal subgroup of finite index acting trivially on $X^*(T)$. 
\end{defi}

\begin{remark}
By definition, the functor $\Res_{L/K}:\uTor_L\rightarrow\uTor_K$ is right adjoint to the base extension functor $\cdot\otimes_KL:\uTor_K\rightarrow\uTor_L$. Let us point out that contrary to the analogous algebraic situation, for $T\in\uTor_L$, the analytic $K$-torus $\Res_{L/K}T$ does \emph{not} necessarily represent the Weil restriction of $T$ in the category of all uniformly rigid $K$-spaces.
\end{remark}

\begin{prop}\label{indprop}
Let $L/K$ be a finite Galois extension, let $M$ be a subfield of $L/K$, and let $T\in\uTor_M$ be split by $L$. Then
\[
c(\Res_{M/K}T)\,=\,f_{M/K}\cdot c(T)\,+\,\frac{1}{2}\nu_K(\mathfrak{d}_{M/K})\cdot\dim T\;.
\]
\end{prop}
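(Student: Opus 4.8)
The statement is the exact counterpart, across Frobenius reciprocity, of Serre's restriction formula $\Ar_{L/K}|_{\Gamma'}=f_{M/K}\Ar_{L/M}+\nu_K(\mathfrak{d}_{M/K})\textbf{\textup{r}}_{\Gamma'}$ (\cite{serre_localfields} Chap.\ VI \S 2 Prop.\ 4) used in the proof of Proposition \ref{barresprop}; the plan is to mimic the strategy of that proof, with $c(\cdot)$ in place of $\bAr^{(p)}_{L/K}$. Write $\Gamma=\Gal(L/K)$ and $\Gamma'=\Gal(L/M)$, and for $T\in\uTor_M$ split by $L$ set
\[
\delta_{M/K}(T)\;:=\;c(\Res_{M/K}T)-f_{M/K}\,c(T)-\frac{1}{2}\nu_K(\mathfrak{d}_{M/K})\dim T\;.
\]
Since $\dim$ is additive, $\Res_{M/K}$ (which is $\Ind^{G_M}_{G_K}$ on character modules) preserves direct sums and isogenies, and $c(\cdot)$ is additive and, by Corollary \ref{isoginvcor}, an isogeny invariant, the function $\delta_{M/K}$ is an additive isogeny invariant and hence descends to a $\Q$-linear form on $R_{\Q_p}(\Gamma')_\Q$. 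The goal is to show $\delta_{M/K}\equiv 0$.

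\emph{Step 1 (algebraic tori).} If $T$ is the analytic torus attached to an algebraic $M$-torus $W$ split by $L$, then $\Res_{M/K}T$ is attached to the Weil restriction $\Res_{M/K}W$, whose character lattice is $\Ind^{\Gamma}_{\Gamma'}X^{*}(W)$. By formula $(*)$ of \cite{chaiyudeshalit} (valid for algebraic tori) one has $c(W)=\frac{1}{2}\Ar_M(X^{*}(W)_\Q)$ and $c(\Res_{M/K}W)=\frac{1}{2}\Ar_K\big((\Ind^{\Gamma}_{\Gamma'}X^{*}(W))_\Q\big)$; combining this with Frobenius reciprocity and Serre's restriction formula above gives $\delta_{M/K}(T)=0$. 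Thus $\delta_{M/K}$ annihilates the image of $R_\Q(\Gamma')_\Q$ in $R_{\Q_p}(\Gamma')_\Q$. In contrast with the $\Q$-rational invariant $\frac{1}{2}\Ar$, that image is in general a proper subspace, so this does not yet finish the proof.

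\emph{Step 2 (reduction to the cyclic case).} By Artin's induction theorem, $R_{\Q_p}(\Gamma')_\Q$ is spanned by characters $\Ind^{\Gamma'}_{C}\psi$ with $C\subseteq\Gamma'$ cyclic. For such a $C$, put $M'=L^{C}$; using the transitivity $\Res_{M/K}\circ\Res_{M'/M}=\Res_{M'/K}$ together with $f_{M'/K}=f_{M/K}f_{M'/M}$ and the discriminant tower formula $\nu_K(\mathfrak{d}_{M'/K})=f_{M/K}\nu_M(\mathfrak{d}_{M'/M})+[M':M]\,\nu_K(\mathfrak{d}_{M/K})$ (\cite{serre_localfields} Chap.\ III \S 4 Prop.\ 8), a short computation yields
\[
\delta_{M/K}(\Res_{M'/M}T')=\delta_{M'/K}(T')-f_{M/K}\,\delta_{M'/M}(T')\qquad(T'\in\uTor_{M'}\text{ split by }L)\;.
\]
Running an induction on the degree over $M$ of the splitting field of $T$ (one may take $L$ to be that field, and kill non-faithful irreducible constituents of $X^*(T)\otimes\Q_p$ by passing to a proper subextension) now reduces the proof to the case where $\Gamma'$ is cyclic: if $\Gamma'$ is non-cyclic then every $C$ occurring in Artin's theorem is proper, so $[L:M']<[L:M]$ and both terms on the right vanish by the inductive hypothesis.

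\emph{Step 3 (the cyclic case — the main obstacle).} By additivity and isogeny invariance one reduces to $X^{*}(T)\otimes_{\Z_p}\Q_p$ irreducible and faithful; writing $\Gamma'=\Gamma'_p\times\Gamma'_{p'}$ for the $p$-part and the prime-to-$p$ part, such a module is of the form $\psi\otimes\rho$ with $\psi$ the unique faithful $\Q_p$-representation of the cyclic $p$-group $\Gamma'_p$ and $\rho$ a faithful $\Q_p$-representation of $\Gamma'_{p'}$. Two special features are brought to bear: the $p$-power cyclotomic polynomials are irreducible over $\Q_p$ and defined over $\Q$, so $\psi$ is already $\Q$-rational and, when $\Gamma'_{p'}=1$, $T$ is isogenous to one coming from an algebraic $M$-torus and Step 1 applies; and along the tame (prime-to-$p$) directions one has $c_\nonlin=0$, so $c=c_\lin$ may be evaluated directly from the Definitions, the accounting reducing to the restriction relation $\bA_{nd}|_{\mu_n}=\bA_n+\frac{d-1}{2}\textbf{\textup{r}}_{\mu_n}$ of Lemma \ref{bacondbasiclem}(iv). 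Repeated use of isogeny invariance then glues the wildly ramified $p$-part and the tamely ramified prime-to-$p$ part of $\psi\otimes\rho$ while tracking the contributions of inertia and discriminant. This last step is where the real work lies: because the base change character is strictly finer than $\frac{1}{2}\Ar_{L/K}$ exactly on tame ramification, it cannot be pinned down on algebraic tori alone, and disentangling the wild and tame parts of a single $\Q_p$-irreducible character while keeping $f_{M/K}$ and $\nu_K(\mathfrak{d}_{M/K})$ straight is the delicate point.
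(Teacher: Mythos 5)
The paper's proof of Proposition \ref{indprop} is a direct, self-contained computation: after choosing an $\O_M$-basis of $\Lie(T^\NR)$ and an upper-triangularizing $\O_L$-basis of $\Lie(T_L^\NR)$, together with an $\O_K$-basis of $\O_M$ and coset representatives of $G_K/G_M$, one identifies $\Lie((\Res_{M/K}T)^\NR)$ with $\Lie(T^\NR)$ as $\O_K$-modules and shows by multilinearity of the determinant that the base-change determinant of $\Res_{M/K}T$ factors as $\Norm_{M/K}(\det(\alpha_{jk}))\cdot(\det(\sigma_l(x_i)))^d$; taking valuations gives the formula. Crucially, this proof is logically \emph{prior} to the main theorem: Proposition \ref{indprop} is an ingredient of Sections \ref{redsec} and \ref{cyccasesec}, not a consequence.

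Your approach runs in the opposite direction and, as it stands, has a genuine gap precisely at Step~3. Steps~1 and 2 are sound (the recursion relation for $\delta$ does check out using the discriminant tower formula, and your observation that Step~1 only kills the image of $R_\Q(\Gamma')_\Q$ — a proper subspace in general — is exactly right). But in Step~3 the only mechanism you offer to ``glue'' the $p$-part $\psi$ with the prime-to-$p$ part $\rho$ of a $\Q_p$-irreducible character is ``repeated use of isogeny invariance \ldots while tracking the contributions of inertia and discriminant,'' and the natural way to do this — passing between $M$ and the intermediate field corresponding to one of the two factors, as is done in Section \ref{cyccasesec} when the paper writes $T=\Res_{K^w/K}T^t$ — is precisely an application of Proposition \ref{indprop}, which would make the argument circular. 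The fallback you mention, evaluating $c_\lin$ directly from the definition in the tame direction, is not a smaller task: it is essentially the basis computation of the paper's proof, which then makes the preceding Artin-theorem reduction superfluous. A further, lesser, issue is that the group-theoretic decomposition $\Gamma'=\Gamma'_p\times\Gamma'_{p'}$ does not in general coincide with the ramification-theoretic one ($\Gamma'_1\subseteq\Gamma'_p$ but equality may fail once $p\mid f_{L/M}$), so identifying $\psi$ with ``the wildly ramified part'' needs more care; the paper's reduction to $L/K$ totally ramified, which you do not perform, is what makes that identification legitimate in Section \ref{cyccasesec}. In short, the right move here is the direct Lie-algebra calculation, not the Artin/isogeny machinery, which belongs one level up in the logical hierarchy.
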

\begin{proof}
Let us abbreviate $d:=\dim T$, let $(v_j)_{j=1,\ldots,d}$ be an $\O_M$-basis for $\Lie(T^\NR)$, and let $(\beta_k)_{k=1,\ldots,d}$ be an $\O_L$-basis of
\[
\Lie(T_L^\NR)\,=\,\Hom(X^*(T),\O_L)
\]
such that the matrix $(\alpha)_{jk}$ describing the base change morphisms of $T$ in terms of these bases is upper triangular; such a basis $(\beta_k)_k$ can be found by means of an easy adaption of the Gauss elimination algorithm. The base change conductor of $T$ is given by the $K$-valuation $\nu_K\det(\alpha_{jk})_{jk}$ of the determinant of $(\alpha_{jk})_{jk}$, where $\nu_K=(1/e_{L/K})\nu_L$. 

Let us write $n=[M:K]$, let $(x_i)_{i=1,\ldots,n}$ be an $\O_K$-basis of $\O_M$, and let $(\sigma_l)_{l=1,\ldots,n}$ be a system of representatives of $G_K$ modulo $G_M$. The choice of the $\sigma_l$ induces a direct sum decomposition of $\Ind^{G_K}_{G_M}X^*(T)$ as a $\Z_p$-module. We have natural identifications of $\O_K$-modules
\begin{eqnarray*}
\Hom_{G_K}(\Ind^{G_K}_{G_M}X^*(T),\O_L)&\cong&\Hom_{G_M}(X^*(T),\O_L)\quad\quad\;\textup{and}\\
\Hom_{G_K}(\Ind^{G_K}_{G_M}X^*(T),1+t\O_L[[t]])&\cong&\Hom_{G_M}(X^*(T),1+t\O_L[[t]])\quad,
\end{eqnarray*}
where a homomorphism $\phi$ on the right is identified with $\sigma\otimes\chi\mapsto\sigma\cdot\phi(\chi)$. By Definition \ref{liealgdefi}, these isomorphisms restrict to an identification of $\O_K$-sublattices $\Lie((\Res_{M/K}T)^\NR)\cong\Lie(T^\NR)$ within $\Hom_{G_M}(X^*(T),\O_L)$. In particular, 
\[
(x_iv_j)_{{\tiny\begin{matrix}i=1,\ldots,n\\ j=1,\ldots,d\end{matrix}}}
\]
is an $\O_K$-basis for $\Lie((\Res_{M/K}T)^\NR)$, and the base change map for $\Res_{M/K}T$ is induced by the homomorphism
\[
\Hom_{G_M}(X^*(T),\O_L)\rightarrow\bigoplus_{l=1}^d\Hom(X^*(T),\O_L)
\]
sending $\phi$ to $\sum_{l=1}^d(\chi\mapsto\sigma_l\cdot\phi(\chi))$. Since $(\beta_k)_{k=1,\ldots,d}$ is an $\O_L$-basis for $\Hom(X^*(T),\O_L)$, the same also holds for $(\sigma_l\beta_k)_{k=1,\ldots,d}$ for every $l=1,\ldots,n$. We use the $\O_L$-basis $(\sigma_l\beta_k)_k$ in the $l$-th direct summand of the direct sum above. With respect to these bases, the above base change morphism is described as follows:
\begin{eqnarray*}
x_iv_j&\mapsto&\sum_{l=1}^n(\chi\mapsto\sigma_l(x_iv_j(\chi))\,=\,\sum_{l=1}^n\sigma_l(x_i)\sigma_l(\chi\mapsto v_j(\chi))\\
&=&\sum_{l=1}^n\sigma_l(x_i)\sigma_l\sum_{k=1}^d\alpha_{jk}\beta_k\\
&=&\sum_{l,k}\sigma_l(x_i)\sigma_l(\alpha_{kj})\,\sigma_l\beta_k\quad.
\end{eqnarray*}
Thus we see that the base change conductor of $\Res_{M/K}T$ is given by
\[
c(\Res_{M/K}T)\,=\,\nu_K(\det(\sigma_l(x_i)\sigma_l(\alpha_{jk}))_{(i,j),(k,l)})\quad.
\]
It remains to compute this determinant. Since $(\alpha_{jk})_{jk}$ is upper triangular, we have
\begin{eqnarray*}
\det(\sigma_l(x_i)\sigma_l(\alpha_{jk}))_{(i,j),(k,l)}&=&\prod_{j=1}^d\det(\sigma_l(x_i)\sigma_l(\alpha_{jj}))_{il}\;.
\end{eqnarray*}
By multilinearity of the determinant, this expression equals
\begin{eqnarray*}
&&\prod_{j=1}^d\prod_{l'=1}^n\sigma_{l'}(\alpha_{jj})\det(\sigma_l(x_i))_{il}\\
&=&\Norm_{M/K}(\det(\alpha_{jk})_{jk})\cdot(\det(\sigma_l(x_i))_{il})^d\;,
\end{eqnarray*}
and we conclude that
\begin{eqnarray*}
c(\Res_{M/K}T)&=&\nu_K(\Norm_{M/K}(\det(\alpha_{jk})_{jk})\cdot(\sqrt{\mathfrak{d}_{L/K}})^d)\\
&=&f_{M/K}\cdot\nu_M(\det(\alpha_{jk})_{jk})\,+\,\frac{d}{2}\nu_K(\mathfrak{d}_{M/K})\;,
\end{eqnarray*}
as desired.
\end{proof}

\section{A formula for the base change conductor}\label{formulasec}

Let $K$ be a complete discretely valued field with perfect residue field $\kappa_K$ such that $p:=\chr\kappa_K>0$. Our main result is the following:

\begin{thm}\label{mainthm}
If $L/K$ is a finite Galois extension and if $T\in\uTor_{L/K}$, then
\[
c(T)\,=\,(\bAr_{L/K}|\chi_{X^*(T)_{\Q_p}})_{\Gal(L/K)}\;.
\]
\end{thm}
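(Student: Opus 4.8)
The plan is to reduce the general statement to a very explicit one-dimensional tame computation, using the structural properties of $c(\cdot)$ and $\bAr_{L/K}$ that have been assembled. First I would record that both sides of the asserted identity are additive in $T$ (equivalently, in the character module $X^*(T)$) and isogeny-invariant: the left side by Corollary \ref{isoginvcor} together with Proposition \ref{fftcompprop}, and the right side because $\bAr_{L/K}$ is a virtual character, so both define $\Q$-linear forms on $R_{\Q_p}(\Gamma)_\Q$ where $\Gamma=\Gal(L/K)$. Phrasing everything in terms of the rational virtual base change character $c^\vee$ of Section \ref{ratvirtsec}, the theorem is equivalent to the identity $c^\vee=\overline{\bAr}^{(p)}_{L/K}$ in $R_{\Q_p}(\Gamma)_\Q$; since $\overline{\bAr}^{(p)}_{L/K}$ and $c^\vee$ are both $\Q_p$-rational virtual characters and the pairing $(\cdot|\cdot)_\Gamma$ is perfect on $R_{\Q_p}(\Gamma)_\Q$, it suffices to check equality after pairing against enough virtual characters.

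The key reduction is via Artin's induction theorem: every $\Q_p$-rational virtual character of $\Gamma$ is a $\Q$-linear combination of characters induced from cyclic subgroups. Weil restriction in $\uTor_K$ corresponds to induction of character modules (the definition in Section \ref{weilressec}), and Proposition \ref{indprop} tells us exactly how $c(\cdot)$ transforms under $\Res_{M/K}$: by a factor $f_{M/K}$ plus a term $\tfrac12\nu_K(\mathfrak d_{M/K})\dim T$. The matching statement on the $\bAr$ side is Proposition \ref{barresprop}, which by Frobenius reciprocity is precisely the dual transformation law. So if the formula holds for all $L$-split analytic tori over every intermediate field $M$ with $\Gal(L/M)$ cyclic, it propagates to $\Gamma$ itself. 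Combining with Corollary \ref{unrambccor} (unramified base change does not change $c$) and Remark just after Definition \ref{bcconddefi} (part (iii): $\bAr_{L/K}=\iota_*\bAr_{L/K^\ur}$), I can further strip off the unramified part and assume $L/K$ is totally ramified and cyclic.

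It then remains to treat the totally ramified cyclic case directly. Here I would split off the tame and wild layers. Write $\Gamma=\Gal(L/K)$ cyclic, $\Gamma_1$ its $p$-Sylow (the wild part), $\Gamma/\Gamma_1\cong\mu_n$ tame. In the totally wild case ($n=1$) the formula $(*)$ for algebraic tori, i.e.\ $c(T)=\tfrac12\Ar_{L/K}(X^*(T)_\Q)$, is available from \cite{chaiyudeshalit} and $\bAr_{L/K}$ agrees with $\tfrac12\Ar_{L/K}$ when paired against $\Q_p$-rational (indeed here $\Q$-rational) representations, since $\bA_1=0$. In the general cyclic case I would run an induction on the order, again using isogeny invariance and $\Res$ to peel off the wild part, so that the essential new content is the totally tamely ramified cyclic case $\Gamma\cong\mu_n$ with $p\nmid n$: there $X^*(T)_{\Q_p}$ decomposes into the $\Q_p$-irreducible pieces indexed by $\Gal(\Q_p(\mu_n)/\Q_p)$-orbits of characters $\chi_r$, the base change conductor of the torus attached to $\chi_r$ is computed explicitly from $(\chi_r\cdot\O_L$-invariants$)=\pi_L^{?}\O_K$-type calculations (essentially $\nu_K$ of the different contribution, giving $r/n$ up to the Galois average), and this is matched against $(\bAr_n|\chi_{(r\bmod n)})_{\mu_n}=r/n$ from the identity $(*)$ in the proof of Lemma \ref{bacondbasiclem}. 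The $p$-power cyclotomic polynomials being both $\Q$-rational and $\Q_p$-irreducible is what makes the wild induction step go through cleanly.

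The main obstacle I expect is the bookkeeping in the totally ramified cyclic case: one must carefully track how the mixed tame/wild ramification filtration interacts with the explicit Lie-algebra lattices $\im\red(M)$ defining $c_\nonlin$, and show that after taking $\Gal(\Q_p(\mu_n)/\Q_p)$-averages the contribution of the tame layer is exactly $\bAr^t_{L/K}$ and the contribution of the wild layer is exactly $\tfrac12$ of the wild part of $\Ar_{L/K}$. The transition between the upper-numbering expression for $\bAr_{L/K}$ (Lemma \ref{uppernumberinglem}) and the integral computation of base change conductors in the cyclic case is where the real work lies; everything else is formal manipulation with the pairing, Artin's theorem, and the adjunction properties already established.
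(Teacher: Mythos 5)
Your proposal follows essentially the same route the paper takes: pass to the rational virtual base change character $c^\vee$ via isogeny invariance (Corollary~\ref{isoginvcor} and Section~\ref{ratvirtsec}), reduce to totally ramified by Corollary~\ref{unrambccor} and $\bAr_{L/K}=\iota_*\bAr_{L/K^\ur}$, reduce to cyclic subgroups by Artin's theorem paired with the matching transformation laws of Propositions~\ref{indprop} and \ref{barresprop}, and finally do an explicit one-dimensional tame computation matched against $(\bAr_n|\chi_r)_{\mu_n}=r/n$. The one place where your organization differs slightly from Section~\ref{cyccasesec} is the handling of the wild layer: rather than treating the totally wild case as a separate base case by invoking formula $(*)$ for algebraic tori, the paper uses the $\Q_p$-irreducibility of $\Phi_{p^i}$ to reduce $V^w$ to the regular representation of $\Gamma^w$ (after shrinking $L$ via Proposition~\ref{pushforwardprop}), realizes $V\cong\Ind^\Gamma_{\Gamma^t}V^t$, and applies Propositions~\ref{indprop} and \ref{barresprop} once more to descend to $K=K^w$, so that $c=c_\lin$ because $|\Gamma|$ is then prime to $p$; both routes are correct, and yours would work, but the paper's avoids appealing to $(*)$ a second time and keeps the wild contribution packaged inside the Weil restriction formula.
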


In the rest of this paper up to Section \ref{abvarapplsec}, we give the proof of Theorem \ref{mainthm}. We easily reduce to the case where $L/K$ is totally ramified: let $\alpha$ denote the inclusion of the inertia subgroup $\Gamma_0$ into $\Gamma:=\Gal(L/K)$, and let $K^\ur=L^{\Gamma_0}$ be the maximal subextension of $L/K$ that is unramified over $K$; then by definition, $\bAr_{L/K}\,=\,\alpha_*\bAr_{L/K^\ur}$, and hence
\[
(\bAr_{L/K}|\chi_{X^*(T)_{\Q_p}})_\Gamma\,=\,(\bAr_{L/K^{\ur}}|\alpha^*\chi_{X^*(T)_{\Q_p}})_\Gamma\,=\,(\bAr_{L/K^\ur}|\chi_{X^*(T\otimes_KK^\ur)_{\Q_p}})_\Gamma\;.
\]
By Corollary \ref{unrambccor}, $c(T)=c(T\otimes_KK^\ur)$; to prove Theorem \ref{mainthm}, we may thus assume that $L/K$ is totally ramified.

\subsection{Reduction to the totally ramified cyclic case}\label{redsec}

By Artin's theorem \cite{serre_linres} Thm.\ 26, $R_{\Q_p}(\Gamma)_\Q$ is generated, as $\Q$-vector space, by elements of the form 
\[
\Ind^\Gamma_{\Gamma'}\chi\;,
\]
where $\Gamma'$ varies among the cyclic subgroups of $\Gamma$ and where $\chi$ varies among the characters of the $\Q_p$-rational representations of $\Gamma'$. To show Theorem \ref{mainthm}, it thus suffices to prove that if $L/K$ is totally ramified, then for any subfield $M$ of $L/K$ such that $L/M$ is cyclic and for any analytic $M$-torus $T$ split by $L$, the statement of Theorem \ref{mainthm} holds, i.e.\ that we have
\[
c(\Res_{M/K}T)\,=\,(\bAr_{L/K}^{(p)}|\chi_{X^*(\Res_{M/K}T)_{\Q_p}})_\Gamma\;.
\]
By Proposition \ref{indprop},
\[
c(\Res_{M/K}T)\,=\,c(T)\,+\,\frac{1}{2}\nu_K(\mathfrak{d}_{M/K})\cdot\dim T\;.
\]
Let us write $\Gamma'=\Gal(L/M)$, let $\alpha:\Gamma'\rightarrow\Gamma$ denote the natural inclusion, and let $\chi:\Gamma'\rightarrow\Q^\alg$ be the character of $X^*(T)_{\Q_p}$; then
\[
\chi_{X^*(\Res_{M/K}T)_{\Q_p}}\,=\,\chi_{\Ind^\Gamma_{\Gamma'}X^*(T)_{\Q_p}}\,=\,\alpha_*\chi\;,
\]
and by Proposition \ref{barresprop},
\[
(\bAr_{L/K}^{(p)},\alpha_*\chi)\,=\,(\alpha^*\bAr_{L/K}^{(p)},\chi)\,=\,\bAr^{(p)}_{L/M}+\frac{1}{2}\nu_K(\mathfrak{d}_{M/K})\textbf{\textup{r}}_{\Gamma'}\;.
\]
It thus suffices to show that
\[
c(T)\,=\,(\bAr^{(p)}_{L/M},\chi)\;;
\]
in other words, after replacing $K$ by $M$, we may assume that $L/K$ is totally ramified and cyclic.

\subsection{The totally ramified cyclic case}\label{cyccasesec}

Let us first discuss the structure of $\Q_p$-rational representations of finite cyclic groups:

\begin{lem}\label{firstcycliclem}
If $\Gamma$ is a finite cyclic group of order $n$ and if $K$ is a field whose characteristic does not divide $n$, then the group algebra $K[\Gamma]$ is the direct product of a collection $(K_i)_{i=1,\ldots,r}$ of finite separable field extensions of $K$, generated by roots of the polynomial $X^n-1\in K[X]$. Moreover, if $V$ is a simple $K[\Gamma]$-module, then $V$ is isomorphic to $K_i$ for some $i$, and $\End_{K[\Gamma]}V\cong K_i$.
\end{lem}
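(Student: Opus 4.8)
The plan is to reduce everything to standard facts about group algebras of finite cyclic groups, which go back to the structure theory of semisimple algebras together with the observation that $K[\Gamma]$ is a quotient of $K[X]$. First I would fix a generator $\sigma$ of $\Gamma$, which gives a surjective $K$-algebra homomorphism $K[X]\to K[\Gamma]$ sending $X\mapsto\sigma$; its kernel is the ideal generated by $X^n-1$, so $K[\Gamma]\cong K[X]/(X^n-1)$. Since $\chr K\nmid n$, the polynomial $X^n-1$ is separable (it is coprime to its derivative $nX^{n-1}$), hence squarefree, so it factors as a product $\prod_{i=1}^r f_i$ of pairwise distinct irreducible monic polynomials in $K[X]$. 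By the Chinese Remainder Theorem, $K[X]/(X^n-1)\cong\prod_{i=1}^r K[X]/(f_i)$, and each $K_i:=K[X]/(f_i)$ is a finite field extension of $K$, separable because $f_i$ is a separable polynomial; moreover each $K_i$ is visibly generated over $K$ by the image of $X$, which is a root of $X^n-1$.

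Next I would read off the module theory from this decomposition. A finite product of fields is a semisimple commutative ring whose simple modules are exactly the $r$ residue fields, each appearing with multiplicity one; concretely, if $e_i\in\prod_j K_j$ is the idempotent that is $1$ in the $i$-th slot and $0$ elsewhere, then $K_i\cong (K[\Gamma])e_i$ as a $K[\Gamma]$-module and these are, up to isomorphism, all the simple $K[\Gamma]$-modules. Hence any simple $K[\Gamma]$-module $V$ is isomorphic to $K_i$ for exactly one $i$. Finally, for the endomorphism statement, $\End_{K[\Gamma]}(K_i)$: since $K[\Gamma]$ acts on $K_i$ through the projection $K[\Gamma]\to K_i$, a $K[\Gamma]$-endomorphism of $K_i$ is the same as a $K_i$-module endomorphism of $K_i$, and $\End_{K_i}(K_i)\cong K_i$ via evaluation at $1$. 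This gives $\End_{K[\Gamma]}V\cong K_i$, completing the argument.

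Strictly speaking there is nothing here that constitutes a genuine obstacle: the entire statement is a packaging of (i) $K[\Gamma]\cong K[X]/(X^n-1)$, (ii) separability of $X^n-1$ under the hypothesis on $\chr K$, (iii) CRT, and (iv) Schur's lemma for a product of fields. The only point that deserves a sentence of care is verifying that $X^n-1$ is separable precisely when $\chr K\nmid n$ — via the gcd with $nX^{n-1}$ — and that this is exactly what makes each $K_i/K$ a separable extension rather than merely a finite one. Everything else is routine commutative algebra, so the proof should be short.
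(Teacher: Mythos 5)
Your proof is correct and follows essentially the same route as the paper's, which simply observes that $K[\Gamma]\cong K[X]/(X^n-1)$ is an \'etale $K$-algebra (equivalently, that $X^n-1$ is separable under the hypothesis on $\chr K$) and then declares the module-theoretic consequences clear. You have merely spelled out the CRT decomposition and the Schur-lemma computation that the paper leaves implicit.
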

\begin{proof}
Let $n$ denote the order of $\Gamma$; then $K[\Gamma]\cong K[X]/(X^n-1)$ is an étale $K$-algebra, which implies the first statement. The second statement is now clear.
\end{proof}

\begin{lem}\label{cyclicstrlem}
Let $\Gamma$ be a finite cyclic group, and let $p$ be a prime number. Let $\Gamma^w,\Gamma^t\subseteq\Gamma$ denote the subgroups of $p$-torsion and prime-to-$p$-torsion respectively.
\begin{enumerate}
\item If $V^t$ and $V^w$ are simple modules over $\Q_p[\Gamma^t]$ and $\Q_p[\Gamma^w]$ respectively of finite $\Q_p$-dimension, then the $\Q_p[\Gamma]$-module $V^t\otimes_{\Q_p} V^w$ is simple.
\item If $V$ is a simple $\Q_p[\Gamma]$-module of finite $\Q_p$-dimension, then there exist $V^t$, $V^w$ as in $(i)$ such that 
\[
V\cong V^t\otimes_{\Q_P}V^w\;.
\]
\end{enumerate}
\end{lem}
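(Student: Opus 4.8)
The plan is to reduce everything to the structure of the group algebra $\Q_p[\Gamma]$ via the decomposition of the finite cyclic group $\Gamma$ as the internal direct product $\Gamma = \Gamma^w\times\Gamma^t$ of its $p$-primary part and its prime-to-$p$ part, which yields a $\Q_p$-algebra isomorphism $\Q_p[\Gamma]\cong\Q_p[\Gamma^w]\otimes_{\Q_p}\Q_p[\Gamma^t]$ compatible with the factorwise $\Gamma^w$- and $\Gamma^t$-actions. By Lemma \ref{firstcycliclem}, $\Q_p[\Gamma^t]$ is a finite product $\prod_i K_i$ of field extensions of $\Q_p$ generated by roots of $X^{|\Gamma^t|}-1$; since $|\Gamma^t|$ is prime to $p$, each such $K_i$ is \emph{unramified} over $\Q_p$. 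On the other side, writing $|\Gamma^w|=p^a$ and choosing a generator, $\Q_p[\Gamma^w]\cong\Q_p[X]/(X^{p^a}-1)\cong\prod_{j=0}^{a}\Q_p(\mu_{p^j})$, where we use that $X^{p^a}-1$ is the product of the cyclotomic polynomials $\Phi_{p^j}$ for $0\le j\le a$ and that each $\Phi_{p^j}$ is irreducible over $\Q_p$ (being Eisenstein at $p$ after the substitution $X\mapsto X+1$); moreover each field $\Q_p(\mu_{p^j})$ is \emph{totally ramified} over $\Q_p$.

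The one nontrivial ingredient I would isolate is the standard fact that an unramified extension $E/\Q_p$ and a totally ramified extension $F/\Q_p$ are linearly disjoint, so that $E\otimes_{\Q_p}F$ is a field of degree $[E:\Q_p]\cdot[F:\Q_p]$: the compositum $EF$ has residue degree at least $[E:\Q_p]$ (it contains $E$) and ramification index at least $[F:\Q_p]$ (it contains $F$), hence $[EF:\Q_p]=[E:\Q_p]\cdot[F:\Q_p]$, and then the canonical surjection $E\otimes_{\Q_p}F\twoheadrightarrow EF$ is an isomorphism for dimension reasons. This is where the hypothesis on $p$ really enters, and I expect it to be the main (indeed essentially the only) obstacle; the rest is bookkeeping.

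Granting this, part (i) follows at once: a simple $\Q_p[\Gamma^t]$-module $V^t$ is, up to isomorphism, one of the unramified fields $K_i$, a simple $\Q_p[\Gamma^w]$-module $V^w$ is one of the totally ramified fields $\Q_p(\mu_{p^j})$, and then $V^t\otimes_{\Q_p}V^w\cong K_i\otimes_{\Q_p}\Q_p(\mu_{p^j})$ is a field carrying the product $\Gamma = \Gamma^w\times\Gamma^t$-action, hence a simple $\Q_p[\Gamma]$-module. For part (ii) I would compute the Wedderburn decomposition directly:
\[
\Q_p[\Gamma]\;\cong\;\Q_p[\Gamma^t]\otimes_{\Q_p}\Q_p[\Gamma^w]\;\cong\;\prod_{i,\,j}K_i\otimes_{\Q_p}\Q_p(\mu_{p^j})\;,
\]
and by the linear disjointness fact each factor $K_i\otimes_{\Q_p}\Q_p(\mu_{p^j})$ is a field, so the displayed product is exactly the decomposition of $\Q_p[\Gamma]$ into simple factors. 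Hence an arbitrary simple $\Q_p[\Gamma]$-module $V$ is isomorphic to some $K_i\otimes_{\Q_p}\Q_p(\mu_{p^j})$, which is $V^t\otimes_{\Q_p}V^w$ for the corresponding simple modules $V^t = K_i$ over $\Q_p[\Gamma^t]$ and $V^w=\Q_p(\mu_{p^j})$ over $\Q_p[\Gamma^w]$. The only point that deserves a line of justification is that the algebra isomorphisms above respect the two commuting actions, so that the field $K_i\otimes_{\Q_p}\Q_p(\mu_{p^j})$ is genuinely the external tensor product of $V^t$ and $V^w$ as a $\Q_p[\Gamma]$-module; this is immediate from the construction.
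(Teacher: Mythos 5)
Your proof is correct, and while it turns on the same key fact as the paper's argument---the linear disjointness of unramified and totally ramified extensions of $\Q_p$, so that $K^t\otimes_{\Q_p}K^w$ is a field---you organize both halves differently and, in fact, more elementarily. For part (i), the paper works with the endomorphism algebras $\End_{\Q_p[\Gamma^t]}(V^t)$ and $\End_{\Q_p[\Gamma^w]}(V^w)$ and then invokes \cite{bourbaki_algebreviii} \S 7 no.\ 4 Th\'eor\`eme 2(b) to conclude simplicity of the tensor product; you instead use Lemma \ref{firstcycliclem} to identify $V^t$ and $V^w$ with field factors of the two group algebras outright, so that $V^t\otimes_{\Q_p}V^w$ is a tensor product of linearly disjoint fields, hence a field, hence simple as a module over itself---no external theorem needed. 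For part (ii) the paper runs a character-orthogonality argument (using \cite{serre_linres} Chap.\ 12 Prop.\ 32 twice, once over $\Gamma^w$ and once over $\Gamma^t$) to produce nonzero pairing with some $\chi_{V^t}\otimes\chi_{V^w}$; you instead compute the full Wedderburn decomposition $\Q_p[\Gamma]\cong\prod_{i,j}K_i\otimes_{\Q_p}\Q_p(\mu_{p^j})$ and read off that the simple modules are exactly these factors. Your route buys a more concrete, self-contained proof at the cost of restating the cyclotomic decomposition of $\Q_p[\Gamma^w]$ (which the paper defers to Section \ref{cyccasesec} anyway); the paper's part (ii) argument is more abstract but would generalize to a setting where one cannot write down the Wedderburn factors so explicitly.
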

\begin{proof}
Let us first show statement $(i$). Let $n\in\N$ and $p^r$, $r\in\N$, denote the cardinalities of $\Gamma^t$ and $\Gamma^w$ respectively. According to Lemma \ref{firstcycliclem} and its proof, $K^t:=\End_{\Q_p[\Gamma^t]}(V^t)$ and  $K^w:=\End_{\Q_p[\Gamma^w]}(V^w)$ are extension fields of $\Q_p$, where $K^t/\Q_p$ is generated by a root of the polynomial $X^n-1\in\Q_p[X]$, and where $K^w/\Q_p$ is generated by a root of $X^{p^r}-1\in\Q_p[X]$. By \cite{serre_localfields} Chap.\ IV \S 4 Propositions 16 and 17, $K^t/\Q_p$ is unramified, while $K^w/\Q_p$ is totally ramified; in particular, $K^t$ and $K^w$ are linearly disjoint over $\Q_p$, which means that $K^t\otimes_{\Q_p}K^w$ is a field. The claim now follows from \cite{bourbaki_algebreviii} \S 7 no.\ 4 Théorème 2 (b).

Let us now show the second statement. By \cite{serre_linres} Chap.\ 12 Prop.\ 32, the characters of the simple $\Q_p[\Gamma]$-modules form an orthogonal $\Z$-basis of $R_{\Q_p}(\Gamma)$. By statement ({\rm i}), it thus suffices to show that there exist simple modules $V^t$ and $V^w$ over $\Q_p[\Gamma^t]$ and $\Q_p[\Gamma^w]$ respectively such that 
\[
(\chi_V,\chi_{V^t}\otimes\chi_{V^w})\neq 0\;.
\]
And indeed, if all these pairings
\[
((\chi_V(\cdot_1,\cdot_2),\chi_{V^t}(\cdot_1)),\chi_{V^w}(\cdot_2))
\]
were zero, then by \cite{serre_linres} Chap.\ 12 Prop.\ 32 applied first to $R_{\Q_p}(\Gamma^w)$ and then to $R_{\Q_p}(\Gamma^t)$, it would follow that $\chi_V=0$. However, $V$ is simple by assumption and, hence, nonzero.
\end{proof}

Let $K$ be a local field with positive residue characteristic $p$, and let $L/K$ be a totally ramified finite cyclic extension with Galois group $\Gamma$. Then $\Gamma=\Gamma_t\times\Gamma_w$, were $\Gamma_t,\Gamma_w\subseteq\Gamma$ are the subgroups of prime-to-$p$ torsion and $p$-torsion respectively. In particular, $\Gamma_w=\Gamma_1$. We let $n$ and $m=p^r$ denote the cardinalities of $\Gamma_t$ and $\Gamma_w$ respectively; then $p\nmid n$. Moreover, we set $K^t=L^{\Gamma_w}$ and $K^w=L^{\Gamma_t}$. By Corollary \ref{isoginvcor}, $c(\cdot)$ is a function on rational cocharacter modules; let us define $c^*(\cdot)$ as a function on rational character modules by
\[
c^*(X^*(T)_{\Q_p})\,:=c(T)\;,
\]
and let us also set $c_\lin^*(X^*(T)_{\Q_p}):=c_\lin(T)$. Let $T$ be an analytic $K$-torus with rational  character module $X^*(T)_{\Q_p}=V$; by Lemma \ref{cyclicstrlem}, we may assume that $V=V^t\otimes_{\Q_p}V^w$, where $V^t$ and $V^w$ are simple over $\Q_p[\Gamma^t]$ and $\Q_p[\Gamma^w]$ respectively. Let us note that
\[
\Q_p[\Gamma^w]\,=\,\Q_p[X]/(X^{p^r}-1)\;,
\]
and
\[
X^{p^r}-1\,=\,\prod_{i=0}^r\Phi_{p^i}\;,
\]
where $\Phi_{p^i}\in\Q_p[X]$ denotes the $p^i$-th cyclotomic polynomial; it is irreducible in $\Q_p[X]$ and of degree
\[
\deg\Phi_{p^i}\,=\,\phi(p^i)\,=\,(p-1)p^{p^i-1}\,,
\]
cf.\ \cite{serre_localfields} Chap.\ 4 \S 4 Prop.\ 17. By the Chinese Remainder Theorem, the natural projections yields an isomorphism of rings
\[
\Q_p[\Gamma^w]\,\cong\,\prod_{i=0}^r\Q_p[X]/(\Phi_{p^i}(X))\;,
\]
where the factors are fields. Thus $V^w\cong \Q_p[X]/(\Phi_{p^i})$ for some $i\in\{0,\ldots,r\}$. Now
\[
X^{p^i}-1\,=\,\Phi_{p^i}\cdot (X^{p^{i-1}}-1)\;,
\]
and hence again the Chinese Remainder Theorem shows that
\[
\Q_p[X]/(X^{p^i}-1)\,\cong\,\Q_p[X]/(\Phi_{p^i}(X))\times\Q_p[X]/(X^{p^{i-1}}-1)
\]
as $\Q_p[\Gamma^w]=\Q_p[X]/(X^{p^r}-1)$-modules via the natural projections. Hence, it suffices to show the statement of Theorem \ref{mainthm} for the quotients 
\[
V^w=\Q_p[X]/(X^{p^i}-1)
\]
of $\Q_p[\Gamma^w]$, for all $i\in\{0,\ldots,r\}$; of course then $V^w$ needs no longer be simple. Let $\sigma\in\Gamma^w$ be the generator corresponding to the class of $X$ under the chosen isomorphism $\Q_p[\Gamma^w]\cong\Q_p[X]/(X^{p^r}-1)$; then $\sigma^{p^i}$ acts trivially on $V^w$. Let $L'$ denote the subfield of $L$ that is fixed by $\langle\sigma^{p^i}\rangle\subseteq\Gamma^w\subseteq\Gamma$; then $V^t\otimes_{\Q_p}V^w$ is already split by $L'$, and the wild ramification subgroup of $\Gal(L'/K)$ is naturally identified with $\Gamma^w/\langle\sigma^{p^i}\rangle$, which has order $p^i$. Replacing $L$ by $L'$ (which is permissible by Proposition \ref{pushforwardprop}), we thus reduce to the case where $r=i$, i.e.\ where $V^w$ is the regular $\Q_p$-rational representation of $\Gamma^w$. 

Now 
\begin{eqnarray*}
V&\cong& V^t\otimes_{\Q_p}\Q_p[\Gamma^w]\\
&\cong& V^t\otimes_{\Q_p[\Gamma^t]}\Q_p[\Gamma^t]\otimes_{\Q_p}\Q_p[\Gamma^w]\\
&\cong&V^t\otimes_{\Q_p[\Gamma^t]}\Q_p[\Gamma]\\
&\cong&\Ind^\Gamma_{\Gamma^t}V^t\;,
\end{eqnarray*}
and hence we may assume that
\[
T\,=\,\Res_{K^w/K}T^t
\]
for an analytic $K^w$-torus $T^t$ that is split by $L$. By Proposition \ref{barresprop} and Proposition \ref{indprop}, we may thus assume that $K=K^w$, i.e.\ that $L/K$ is cyclic and totally tamely ramified. In this situation, $c(\cdot)=c_\lin(\cdot)$ because then the order $n$ of $\Gamma$ is invertible in any $\Z_p$-module, which implies that the $H^1$ cohomology groups that intervene in the definition of $c_\nonlin(\cdot)$ vanish. Now
\[
c^*_\lin(V)\,=\,c_\lin(V^\vee\otimes_{\Q_p}K)\;,
\]
where $V^\vee$ denotes the contragredient dual of $V$, and after replacing $K$ by an unramified extension if necessary, we may assume that $K$ contains $\mu_n$. Then $V\otimes_{\Q_p}K$ splits as a direct sum of one-dimensional characters, so it suffices to show that
\[
c_\lin(V)\,=\,(\overline{\bAr}_{L/K},V)
\]
for any $\O_K[\Gamma]$-module $V$ that is finite free of rank $1$ over $\O_K$. Let $\sigma$ be a generator of $\Gamma$, and let $\zeta\in\mu_n$ be a primitive $n$-th root of unity; then $\sigma$ acts via multiplication by $\zeta^i$ for a unique $i\in\{0,\ldots,n-1\}$. Let $\pi_L$ be a uniformizer of $L$ such that $\sigma\pi_L=\zeta\pi_L$; such a uniformizer exists by \cite{lang_ant} Chap.\ II \S 5 Prop.\ 12; then
\[
(V\otimes_{\O_K}\O_L)^\Gamma\,=\,\O_L^{\sigma=\zeta^{n-i}}\,=\,\pi_L^{n-i}\O_K\;,
\]
and the base change morphism $\phi_{V,L}$ is the natural inclusion
\[
\pi_L^{n-i}\O_L\rightarrow\O_L\;;
\]
the $\O_L$-length of its cokernel ist $n-i$, so $c(V)=(n-i)/n$. On the other hand, $(\overline{\bAr}_{L/K},\chi_V)=(n-i)/n$, which concludes the proof of Theorem \ref{mainthm}.

\section{Application to semiabelian varieties with potentially ordinary reduction}\label{abvarapplsec}

As before, let $K$ be a local field with $\chr\kappa_K=p>0$. Let $K'$ be the completion of the maximal unramified extension of $K$ in a fixed algebraic closure of $K$, let $A$ be an semiabelian $K$-variety with potentially ordinary reduction, and let us set $A':=A\otimes_KK'$. Then
\[
c(A)\,=\,c(A')\;,
\]
because Néron models commute with formally unramified base change. Let $L/K'$ be a finite Galois extension such that $A_L:=A'\otimes_{K'}L$ has semi-stable reduction, and let $A_L^\NR$ be the Néron model of $A_L$. Then the completion $A_L^\NR|_e$ of $A_L^\NR$ along the unit section of its special fiber is a split formal torus, and $\Gal(L/K')$ acts on its character module $X^*(A_L^\NR|_e)$. By de Shalit's recipe (see \cite{chaiyudeshalit} Prop.\ A1.7),
\[
c(A')\,=\,c^*(X^*(A_L^\NR|_e))\,,
\]
so our main result Theorem \ref{mainthm} yields the following formula for semiabelian $K$-varieties with potentially ordinary reduction:
\begin{thm}\label{mainthmabvar} Let $A$ be a semiabelian $K$-variety; then
\[
c(A)\,=\,(\bAr_{L/K'},\chi_{X^*(A_L^\NR|_e)})_{\Gal(L/K')}\;.
\]
\end{thm}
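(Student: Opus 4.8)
The plan is to deduce the theorem from the main result Theorem \ref{mainthm}, applied not over $K$ but over the field $K'$, by way of de Shalit's recipe for the Lie algebra of a N\'eron model. The only ingredient beyond Theorem \ref{mainthm} that is really needed is the structural fact --- itself a form of de Shalit's recipe --- that the base change conductor of a semiabelian variety with potentially ordinary reduction equals that of a suitable associated analytic torus; everything else is formal.

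First I would pass from $K$ to $K'$. Since the formation of N\'eron models commutes with formally unramified base change (\cite{blr}), the canonical morphism $A^\NR\otimes_{\O_K}\O_{K'}\to (A')^\NR$ is an isomorphism, as is the analogous morphism after a further finite extension over which semistable reduction is attained; comparing Lie algebras then gives $c(A)=c(A')$. This reduction is convenient, since $\kappa_{K'}=\overline{\kappa_K}$ is algebraically closed --- in particular perfect, of characteristic $p$ --- so that both Theorem \ref{mainthm} and the structure theory invoked next are available over $K'$.

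Next I would invoke de Shalit's recipe for $(A')^\NR$. By the semistable reduction theorem, choose a finite Galois extension $L/K'$ over which $A_L:=A'\otimes_{K'}L$ acquires semistable (hence, by hypothesis, ordinary) reduction. Then the completion $A_L^\NR|_e$ of $A_L^\NR$ along the unit section of its special fibre is a \emph{split} formal torus over $\O_L$: this uses potential ordinarity --- the formal group of an ordinary semiabelian variety over an algebraically closed field of characteristic $p$ is of multiplicative type --- together with the fact that a formal torus over $\O_L$ splits because $\kappa_L$ is algebraically closed. Hence $\Gal(L/K')$ acts on the finite free $\Z_p$-module $X^*(A_L^\NR|_e)$, which defines an object of $\Mod(G_{K'},\Z_p)$, i.e.\ an analytic $K'$-torus $\hat{A}$ split by $L$ with $X^*(\hat{A})=X^*(A_L^\NR|_e)$. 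De Shalit's recipe \cite{chaiyudeshalit} A1.9, applied to the N\'eron smoothening process that builds $(A')^\NR$ from $A_L^\NR$, should identify $\Lie((A')^\NR)$ with the Galois-theoretic Lie algebra $\Lie_\Gal(X_*(\hat{A})^\NR)$ of Definition \ref{liealgdefi}, compatibly with base change to $\O_L$; it then follows that $c(A')=c_\Gal(\hat{A})=c^*(X^*(A_L^\NR|_e))$. This is the step that carries all the geometric content, and I expect it to be the main obstacle: one must verify that, near the identity section, the smoothening of $A_L^\NR$ runs exactly as in the toric case treated in \cite{chaiyudeshalit} A1, which is where potential ordinarity enters in an essential way. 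Having isolated this point, one may alternatively just quote it as the adaptation of de Shalit's recipe to the semiabelian setting.

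Finally I would apply Theorem \ref{mainthm} over $K'$ --- legitimate since $\kappa_{K'}$ is perfect of characteristic $p$ --- to the analytic $K'$-torus $\hat{A}$, which is split by $L$ by construction, obtaining
\[
c(\hat{A})\,=\,(\bAr_{L/K'}|\chi_{X^*(\hat{A})_{\Q_p}})_{\Gal(L/K')}\;.
\]
Combined with $c(A)=c(A')=c^*(X^*(A_L^\NR|_e))=c(\hat{A})$ and $X^*(\hat{A})=X^*(A_L^\NR|_e)$, this gives the asserted formula. The functional $c^*$ is well defined thanks to isogeny invariance, Corollary \ref{isoginvcor}, which is exactly what makes the right-hand side depend only on the virtual character $\chi_{X^*(A_L^\NR|_e)}$. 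Beyond the one delicate step above, what remains is bookkeeping with conventions, chiefly keeping track of the duality between $X_*$ and $X^*$.
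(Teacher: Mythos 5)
Your proposal is correct and follows essentially the same route as the paper: reduce to $K'$ by unramified base change, invoke de Shalit's recipe (adapted to the N\'eron smoothening of the semiabelian variety, where potential ordinarity is used to make $A_L^\NR|_e$ a split formal torus) to identify $c(A')$ with the Galois-theoretic base change conductor of the associated analytic torus, and then apply Theorem \ref{mainthm} over $K'$. You also correctly flag the adaptation of de Shalit's recipe as the one non-formal step; the paper treats it the same way, quoting \cite{chaiyudeshalit} A1.9 adapted to the construction of $A^\NR$ (see also the Remark following Corollary \ref{unrambccor}).
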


\subsection{The potentially ordinary CM case}

To illustrate how Theorem \ref{mainthmabvar} may be applied in concrete situations, we give a description of the Galois-module $X^*(A_L^\NR|_e)$ in terms of an algebraic Hecke character, in the special case where the abelian variety under consideration is obtained from a CM abelian variety over a number field via base change to a place of potentially ordinary reduction.

Let $K$ be a number field, let $g$ be a positive integer, let $F$ be a CM-algebra of $\Q$-dimension $2g$, and let $(A,\iota)$ be a CM abelian variety over $K$ with complex multiplication by $F$. Let $\overline{\Q}$ be an algebraic closure of $K$, let $\Phi\subseteq\Hom(F,\overline{\Q})$ denote the CM type of $(A,\iota)$,
% let $E\subseteq\overline{\Q}$ denote the associated reflex field, 
and let $F_0\subseteq F$ denote the totally real subalgebra of $F$. Let
\[
\epsilon\,:\,\A_K^\times\rightarrow F^\times
\]
be the algebraic Hecke character that is attached to $A$ by the Main Theorem of Multiplication, see \cite{chai_conrad_oort} Thm.\ 2.5.1. Let $\epsilon_\alg$ denote the algebraic part of $\epsilon$,
%; then
%\[
%\epsilon_\alg\,=\,N_\Phi\circ\Nm_{K/E}\,:\,(\cdot\otimes_\Q K)^\times\rightarrow(\cdot\otimes_\Q F)^\times\;,
%\]
%where $N_\Phi$ is the reflex norm attached to $\phi$ (cf.\ \cite{chai_conrad_oort} Def.\ 2.1.4.3). Let moreover, 
and let, for any prime number $\ell$,
\[
\psi_\ell\,:\,\Gal(K^\ab/K)\rightarrow F_\ell^\times
\]
denote the character describing the Galois action on the $\ell$-adic rational Tate module of $A$, where we set $F_\ell:=F\otimes_\Q\Q_\ell$; then by \cite{chai_conrad_oort} Thm.\ 2.5.1 and Lemma 2.4.9 as well as its proof,
\[
\epsilon(x)\,=\,\psi_\ell(r_K(x))\cdot\epsilon_{\alg,\ell}(x_\ell)\;,
\]
where $x\in\A_K^\times$, where $x_\ell$ denotes the $\ell$-component of $x$, where $r_K$ denotes the arithmetically normalized reciprocity map and where we write $\epsilon_{\alg,\ell}$ for 
\[
\epsilon_\alg(\Q_\ell)\,:\,K_\ell^\times\rightarrow F_\ell^\times\;.
\]
For each prime number $\ell$, we choose an algebraic closure $\overline{\Q}_\ell$ of $\Q_\ell$ containing $\overline{\Q}$, and we let $\C_\ell$ denote the completion of $\overline{\Q}_\ell$; then for each $\ell$, we have natural identifications
\[
\Hom(F,\overline{\Q})\,=\,\Hom(F,\overline{\Q}_\ell)\,=\,\Hom(F_\ell,\overline{\Q}_\ell)\,=\,\coprod_{\nu|\ell}\Hom(F_\nu,\overline{\Q}_\ell)\;,
\]
where we might as well write $\C_\ell$ instead of $\overline{\Q}_\ell$. The resulting decomposition of $\Phi$ encodes whether $A$ has potentially ordinary reduction at a place above $\ell$:

\begin{lem}\label{potordredlem}
Let $\nu$ be a place of $K$, and let $\ell$ be the rational prime below $\nu$. Then the following are equivalent:
\begin{packed_enum}
\item The reduction of $A$ in $\nu$ is potentially ordinary.
\item Every place $\mu_0$ of $F_0$ above $\ell$ splits completely in $F$, and if $(\mu,\bar{\mu})$ is a pair of conjugate places of $F$ above $\ell$, then the intersection of $\Phi$ with
\[
\Hom(F_\mu,\overline{\Q}_\ell)\amalg\Hom(F_{\bar{\mu}},\overline{\Q}_\ell)
\]
is either $\Hom(F_\mu,\overline{\Q}_\ell)$ or $\Hom(F_{\bar{\mu}},\overline{\Q}_\ell)$.
\end{packed_enum}
In particular, if $A$ has potentially ordinary reduction in $\nu|\ell$, then $A$ has potentially ordinary reduction in any place $\nu'$ of $K$ dividing $\ell$.
\end{lem}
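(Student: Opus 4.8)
The strategy is to reduce to good reduction and then translate ordinarity of the reduction into a statement about its $\ell$-divisible group, which Shimura--Taniyama theory expresses in terms of $\Phi$. Since $(A,\iota)$ has complex multiplication, it has potential good reduction at every place of $K$, so after enlarging $K$ and replacing $\nu$ by a place above it we may assume that $A$ has good reduction at $\nu$; this base change affects neither $\Phi$ nor the sets of places of $F$ and $F_0$ above $\ell$, hence neither side of the asserted equivalence. Write $p=\ell$, let $\bar A$ denote the reduction of $A$ at $\nu$, and let $k$ be an algebraic closure of the residue field. Then (i) says precisely that the $p$-divisible group $\bar A[p^\infty]$ over $k$ is isomorphic to $\mu_{p^\infty}^g\times(\Q_p/\Z_p)^g$.

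First I would use the $\O_F$-action to decompose everything in sight. Writing $F_\ell=\prod_{\mu\mid\ell}F_\mu$ over the places $\mu$ of $F$ above $\ell$, the idempotents of $F_\ell$ split $\bar A[p^\infty]=\prod_\mu X_\mu$, where $X_\mu$ carries an $\O_{F_\mu}$-action making its rational Tate module free of rank one over the field $F_\mu$; in particular $X_\mu$ has height $[F_\mu:\Q_\ell]$. If $\bar A$ is ordinary, then each $X_\mu$ inherits a splitting into a multiplicative and an \'etale part, and since $F_\mu$ is a field and the rational Tate module has $F_\mu$-rank one, one of these two parts vanishes: each $X_\mu$ is either multiplicative or \'etale. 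A polarization of $A$ induces a perfect Weil pairing on $\bar A[p^\infty]$ under which the $\O_F$-action is adjoint to complex conjugation and the connected (multiplicative) part is exactly dual to the \'etale part; hence $X_\mu$ is multiplicative if and only if $X_{\bar\mu}$ is \'etale. In particular $\mu\ne\bar\mu$, so every place $\mu_0$ of $F_0$ above $\ell$ splits in $F$. Conversely, if every such $\mu_0$ splits and each $X_\mu$ is multiplicative or \'etale, then $\bar A[p^\infty]$ is visibly the product of a multiplicative and an \'etale $p$-divisible group, and, by the height count below, each factor has height $g$, so $\bar A$ is ordinary.

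The remaining, and crucial, ingredient is the dictionary between which $X_\mu$ is multiplicative and the CM type. Here the comparison $\Lie(A)\otimes_K\overline{\Q}_\ell\cong\bigoplus_{\phi\in\Phi}(\overline{\Q}_\ell)_\phi$ --- equivalently, the Shimura--Taniyama formula for the reduction of a CM abelian variety, cf.\ \cite{chai_conrad_oort} --- shows that, under the identification $\Hom(F,\overline{\Q})=\coprod_{\mu\mid\ell}\Hom(F_\mu,\overline{\Q}_\ell)$, the Hodge--Tate weight of the $\phi$-component of $V_p\bar A$ is $1$ if $\phi\in\Phi$ and $0$ otherwise, whereas $X_\mu$ being multiplicative (resp.\ \'etale) forces every Hodge--Tate weight in $\Hom(F_\mu,\overline{\Q}_\ell)$ to equal $1$ (resp.\ $0$). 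Combining the two, $X_\mu$ is multiplicative exactly when $\Hom(F_\mu,\overline{\Q}_\ell)\subseteq\Phi$ and \'etale exactly when $\Hom(F_\mu,\overline{\Q}_\ell)\cap\Phi=\emptyset$; hence the heights of the multiplicative $X_\mu$ sum to $\#\Phi=g$ and those of the \'etale $X_\mu$ to $2g-\#\Phi=g$. Together with the splitting of the places $\mu_0$ this is precisely the equivalence of (i) and (ii). I expect this last dictionary to be the real obstacle, the rest being formal manipulation with idempotents and pairings; one either quotes the Shimura--Taniyama description from \cite{chai_conrad_oort} or derives it from the crystalline comparison for the good reduction of $A$ at $\nu$. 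Finally, the closing assertion is immediate: condition (ii) involves only $\ell$, $F$, $F_0$ and $\Phi$, not the particular place $\nu$, so once it is equivalent to (i) for one place $\nu\mid\ell$ it is equivalent to (i) for every place $\nu'\mid\ell$.
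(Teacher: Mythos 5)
Your proof is correct and follows essentially the same strategy as the paper: translate potential ordinarity into the condition that the $\ell$-divisible group of the reduction has only slopes $0$ and $1$, decompose under the $F_\ell$-action, and then apply the Shimura--Taniyama dictionary relating the Newton slope of each $X_\mu$ to $\Phi\cap\Hom(F_\mu,\overline{\Q}_\ell)$. The paper's own proof is much terser---it reduces to the slope condition and then cites \cite{chai_conrad_oort} Lemma 3.7.2.1 wholesale---whereas you unpack the content of that lemma. One small variant: you derive the splitting of $\mu_0$ from the polarization/Cartier-duality pairing between the multiplicative and \'etale parts; one can also get it purely combinatorially from the CM-type axiom (for each $\phi$, exactly one of $\phi,\bar\phi$ lies in $\Phi$), since if $\mu=\bar\mu$ then $\Hom(F_\mu,\overline{\Q}_\ell)$ is conjugation-stable and $\Phi$ can meet it in exactly half, never all or none. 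A minor point of rigor: your converse direction ``(ii) $\Rightarrow$ each $X_\mu$ is multiplicative or \'etale'' really goes through the Shimura--Taniyama formula, which a priori only determines $X_\mu$ up to isogeny; this is harmless because ordinarity is an isogeny-invariant (indeed Newton-slope) condition, but it is worth saying explicitly rather than asserting the $p$-divisible group is ``visibly'' a product.
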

\begin{proof}
This is a consequence of the Shimura-Taniyama formula: let $K'_\nu/K_\nu$ be a finite Galois extension such that $A\otimes_KK'_\nu$ has semi-stable reduction, let $\sA'$ denote the Néron model of $A\otimes_KK'_\nu$ over $\O_{K'_\nu}$, and let $\kappa$ be an algebraic closure of the residue field of $\O_{K'_\nu}$; then $A$ has potentially ordinary reduction at $\nu$ if and only if the $\ell$-divisible group
\[
\sA'[\ell]\otimes_{\O_{K'_\nu}}\kappa
\]
over $\kappa$ has only multplicative and étale components, i.e.\ slopes $1$ and $0$. The desired statement now follows from \cite{chai_conrad_oort} Lemma 3.7.2.1.
\end{proof}

Let now $\nu$ be a place of $K$ where $A$ has potentially ordinary reduction, let $\ell$ denote the rational prime below $\nu$, and let us choose an embedding of $K_\nu$ into $\overline{\Q}_\ell$ such that the inclusion of $K$ into $K_\nu$ is compatible with the fixed embedding of $\overline{\Q}$ into $\overline{\Q}_\ell$. Let $L$ denote the completion of the maximal unramified extension $K_\nu^\ur$ of $K_\nu$ inside $\C_\ell$; then $L$ is a complete discretely valued field of mixed characteristic $(0,\ell)$ with algebraically closed residue field. Let $\overline{L}$ be an algebraic closure of $L$ inside $\C_\ell$. The natural composite homomorphism
\[
\psi_{\ell,L}\,:\,\Gal(\overline{L}/L)\rightarrow\Gal(\overline{\Q}_\ell/K_\nu^\ur)\hookrightarrow\Gal(\overline{\Q}_\ell/K_\nu)\hookrightarrow\Gal(\overline{\Q}/K)\twoheadrightarrow\Gal(K^\ab/K)\overset{\psi_\ell}{\rightarrow}F_\ell^\times
\]
describes the $\Gal(\overline{L}/L)$-action on the rational Tate module $V_\ell(A_L[\ell^\infty])$ of the $\ell$-divisible group $A_L[\ell^\infty]$ of $A_L$. Let $L'/L$ be a finite Galois extension inside $\overline{L}$ such that $A_{L'}$ has good ordinary reduction, let $\sA'$ denote the Néron model of $A_{L'}$, and let
\[
A_L[\ell^\infty]^0\subseteq A_L[\ell^\infty]
\]
denote the $\ell$-divisible subgroup corresponding to the multiplicative part $\sA'[\ell^\infty]^0$ of the $\ell$-divisible group $\sA'[\ell]$ of $\sA'$. Let us note that $\sA'[\ell^\infty]^0$ is identified with the $\ell$-divisible group of the split formal $\O_{L'}$-torus $\sA'|_e$ which is obtained from $\sA'$ via formal completion along the identity section $e$ of the special fiber of $\sA'$. The character module $X^*(\sA'|_e)$ of this formal torus is a finite free $\Z_\ell$-module of rank $g$, equipped with an action of $\Gal(L'/L)$. Let us observe that the action of $\Gal(\overline{L}/L)$ on $X^*(\sA'|_e)$ differs from the action of $\Gal(\overline{L}/L)$ on the dual of the Tate module $T_\ell(A_L[\ell^\infty]^0)$ of $A_L[\ell^\infty]^0$ precisely by the $\ell$-adic cyclomotic character:

\begin{lem}\label{cyclolem1}
In the above situation, there are natural isomorphisms of $\Z_\ell[\Gal(\overline{L}/L]$-modules
\begin{eqnarray*}
X^*(\sA'|_e)&\cong&T_\ell(A_L[\ell^\infty]^0)^\vee\otimes_{\Z_\ell}T_\ell(\mu_{\ell^\infty})\;,\\
T_\ell(A_L[\ell^\infty]^0)&\cong&X_*(\sA'|_e)\otimes_{\Z_\ell}T_\ell(\mu_{\ell^\infty})\;.
\end{eqnarray*}
\end{lem}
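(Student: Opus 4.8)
The plan is to identify both isomorphisms with statements about the $\ell$-divisible group of the formal torus $\sA'|_e$ and the Weil pairing on the ordinary abelian variety $A_{L'}$. First I would recall the structure of the ordinary reduction: over $L'$, the N\'eron model $\sA'$ has good reduction, and the connected $\ell$-divisible group $\sA'[\ell^\infty]$ sits in a connected--\'etale sequence whose multiplicative part is $\sA'[\ell^\infty]^0 = (\sA'|_e)[\ell^\infty]$, a split $\ell$-divisible torus of height $g$. For a split formal torus $\hat{T}$ over $\O_{L'}$ with character module $X^*(\hat{T})$, the associated $\ell$-divisible group is $\hat{T}[\ell^\infty] \cong X_*(\hat{T}) \otimes_{\Z_\ell} \mu_{\ell^\infty}$, hence its Tate module is $T_\ell(\hat{T}[\ell^\infty]) \cong X_*(\hat{T}) \otimes_{\Z_\ell} T_\ell(\mu_{\ell^\infty})$ as $\Z_\ell[\Gal(\overline{L}/L)]$-modules, the Galois action on $X_*(\hat{T})$ being the one coming from the descent datum / the $\Gal(L'/L)$-action. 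Applying this with $\hat{T} = \sA'|_e$ gives the second displayed isomorphism directly, once one checks that $A_L[\ell^\infty]^0$ is indeed $L$-rationally the subgroup cut out by the multiplicative part after base change to $L'$, which is automatic since the connected--\'etale sequence of a $p$-divisible group over a Henselian base is canonical and hence $\Gal(\overline{L}/L)$-stable.

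Next I would derive the first isomorphism from the second by duality via the Weil pairing. Since $A_{L'}$ has good reduction, the $\ell$-adic Weil pairing identifies $T_\ell(A_L[\ell^\infty])$ with its own $\Z_\ell[\Gal]$-linear dual twisted by $T_\ell(\mu_{\ell^\infty})$, i.e. $T_\ell(A_L[\ell^\infty])^\vee \cong T_\ell(A_L[\ell^\infty]) \otimes_{\Z_\ell} T_\ell(\mu_{\ell^\infty})^\vee$; restricting this pairing to the connected part and passing to the quotient identifies the \emph{multiplicative} sub $T_\ell(A_L[\ell^\infty]^0)$ with the twisted dual of the \emph{\'etale} quotient, and correspondingly $X^*(\sA'|_e) = X_*(\sA'|_e)^\vee$. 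Combining $X^*(\sA'|_e) = X_*(\sA'|_e)^\vee$ with the second isomorphism, tensored appropriately with $T_\ell(\mu_{\ell^\infty})$, yields $X^*(\sA'|_e) \cong T_\ell(A_L[\ell^\infty]^0)^\vee \otimes_{\Z_\ell} T_\ell(\mu_{\ell^\infty})$, which is the first claim. Alternatively, and perhaps more cleanly, I would argue directly: the Cartier dual of $\sA'[\ell^\infty]^0$ is the \'etale $\ell$-divisible group with Tate module $X^*(\sA'|_e)$, and Cartier duality is $T_\ell(\mu_{\ell^\infty}) = \Z_\ell(1)$-twisted $\Z_\ell$-duality on Tate modules, giving $X^*(\sA'|_e) \cong T_\ell(\sA'[\ell^\infty]^0)^\vee \otimes T_\ell(\mu_{\ell^\infty})$ at once; then use $T_\ell(\sA'[\ell^\infty]^0) = T_\ell(A_L[\ell^\infty]^0)$.

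The steps in order are: (1) recall the connected--\'etale decomposition of $A_L[\ell^\infty]$ and note its Galois-stability, identifying $A_L[\ell^\infty]^0$ with the multiplicative part; (2) recall that for a split formal torus over $\O_{L'}$ the $\ell$-divisible group, and hence the Tate module, is $X_*$ twisted by $\mu_{\ell^\infty}$, with the descent datum encoding the Galois action — this gives the second isomorphism; (3) invoke Cartier duality for $\ell$-divisible groups (equivalently the Weil pairing on the good-reduction abelian variety $A_{L'}$) to pass between $X_*(\sA'|_e)$ and $X^*(\sA'|_e)$ with the appropriate $\Z_\ell(1)$-twist, yielding the first isomorphism; (4) check Galois-equivariance throughout, i.e. that all the identifications are compatible with the $\Gal(\overline{L}/L)$-action (which factors through $\Gal(L'/L)$ on the character and cocharacter modules). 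I expect step (1)–(2) to be essentially formal given the setup, and the main point requiring care to be step (4) combined with step (3): making sure the $\ell$-adic cyclotomic character appears with the correct sign/exponent, i.e. that the discrepancy between the Galois action on $X^*(\sA'|_e)$ and on $T_\ell(A_L[\ell^\infty]^0)^\vee$ is \emph{exactly} one copy of $T_\ell(\mu_{\ell^\infty})$ and not its inverse, which comes down to the standard but easy-to-misplace fact that for the multiplicative $\ell$-divisible group $\mu_{\ell^\infty}$ itself one has $X^*=\Z_\ell$ with trivial action while $T_\ell(\mu_{\ell^\infty})=\Z_\ell(1)$, so that $X^* = T_\ell^\vee(1)$ on the nose.
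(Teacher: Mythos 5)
Your proposal is correct and is essentially the approach the paper takes: the paper computes $X^*(\sA'|_e)$ as the inverse limit $\varprojlim_n \Hom((\sA'|_e)[\ell^n](\overline{L}),\mu_{\ell^n}(\overline{L}))$ with the semilinear $\Gal(L'/L)$-action on $\sA'|_e$ built in, which is precisely your Cartier-duality argument (the twist by $T_\ell(\mu_{\ell^\infty})$ coming from the $\mu_{\ell^n}$ in the target), and then obtains the other isomorphism by dualizing. The only cosmetic difference is ordering --- you derive the $T_\ell \cong X_* \otimes T_\ell(\mu_{\ell^\infty})$ isomorphism first and then dualize, whereas the paper proves the $X^*$ identity first --- and your initial Weil-pairing detour is superfluous given your cleaner second argument, which is the one the paper actually uses.
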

\begin{proof}
The $\Z_\ell[\Gal(\overline{L}/L)]$-module $X^*(\sA'|_e)$ is the projective limit of the $\Z/\ell^n\Z[\Gal(\overline{L}/L)]$-modules
\[
\Hom((\sA'|_e)[\ell^n](\overline{L}),\mu_{\ell^n}(\overline{L}))\;;
\]
here the $\Gal(\overline{L}/L)$-action is given by the natural actions on both $(\sA'|_e)[\ell^n](\overline{L})$ and on $\mu_{\ell^n}(\overline{L})$, where the action on $(\sA'|_e)[\ell^n](\overline{L})$ is defined by taking the natural $\O_{L'}$-semilinear action of $\Gal(L'/L)$ on $\sA'|_e$ into account. Since $(\sA'|_e[\ell^n])\otimes_{\O_L}\O_{L'}\cong\mu_{\ell^n}^g$, the action of $\Gal(\overline{L}/L)$ on $X^*(\sA'|_e)$ is trivial on the open subgroup $\Gal(\overline{L}/L')$ of $\Gal(\overline{L}/L)$. On the other hand, the $\Z_\ell[\Gal(\overline{L}/L)]$-module $V_\ell(A_L[\ell^\infty]^0)$ is the projective limit of the $\Z/\ell^n\Z[\Gal(\overline{L}/L)]$-modules
\[
(\sA'|_e)[\ell^n](\overline{L})\;,
\]
where again the action of $\Gal(\overline{L}/L)$ is defined taking the $\O_{L'}$-semilinear action of $\Gal(L'/L)$ on $\sA'|_e$ into account. This establishes the first isomorphism; the second one is simply the dual of the first.
\end{proof}

Since $A_L$ has CM by $F$, the $\ell$-divisible group $A_L[\ell^\infty]$ has CM by $F_\ell$. Let $\O\subseteq F$ be an order acting on $A_L$; then $\O_\ell$ acts on $A_L[\ell^\infty]$, and by \cite{chai_conrad_oort} Lemma 3.7.2.1, $A_L[\ell^\infty]^0$ is the direct factor of $A_L[\ell^\infty]$ that is cut out by the direct factor
\[
\O_\ell^\Phi\,:=\,\prod_{\nu\in\tilde{\Phi}}\O_\nu
\]
of $\O_\ell$, where $\tilde{\Phi}$ denotes the set of places of $F$ above $\ell$ such that $\Phi$ meets $\Hom(F_\nu,\overline{\Q}_\ell)$, cf.\ Lemma \ref{potordredlem}. Let $F_\ell^\Phi$ denote the direct factor of $F_\ell$ corresponding to $\O_\ell^\Phi$. We know that $V_\ell(A[\infty])$ is free of rank one as a module over $F_\ell$; hence the $\Q_\ell[\Gal(\overline{L}/L)]$-module $V_\ell(A[\infty]^0)$ is given by $\psi_{\ell,L}$, together with the natural action of $F_\ell^\times$ on $F_\ell^\Phi$. 

%It remains to determine the $\ell$-component $\epsilon_{\alg,\ell}=\epsilon_\alg(\Q_\ell)$ of the algebraic part $\epsilon_\alg$ of $\epsilon$, or at least its projection to $(F_\ell^\Phi)^\times$.

\begin{lem}
If $A$ has potentially ordinary reduction in the places above $\ell$, then
\[
K_\ell^\times\overset{\epsilon_{\alg,\ell}}{\longrightarrow} F_\ell^\times\rightarrow (F_\ell^\Phi)^\times
\]
is the inverse of the $\ell$-adic cyclotomic character, given by
\[
x\mapsto(\Nm_{K_\ell/\Q_\ell}x)^{-1}\,\in\Q_\ell^\times\subseteq (F_\ell^\Phi)^\times\;.
\]
\end{lem}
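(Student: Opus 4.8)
The plan is to unwind the definition of the algebraic Hecke character $\epsilon$ attached to a CM abelian variety and track the algebraic part $\epsilon_\alg$ through the Shimura--Taniyama decomposition. The algebraic part $\epsilon_\alg$ is, by construction, a homomorphism of algebraic tori $\Res_{K/\Q}\G_{m,K}\to\Res_{F/\Q}\G_{m,F}$, so on cocharacter modules it is encoded by a $\Z$-linear map determined by the CM type $\Phi$; explicitly, via the Main Theorem of Complex Multiplication, $\epsilon_\alg$ is the composite of the norm-type map built from $\Phi$ with the appropriate twist, and its formula is recorded in \cite{chai_conrad_oort} Thm.\ 2.5.1. First I would write out $\epsilon_{\alg,\ell}\colon K_\ell^\times\to F_\ell^\times$ in terms of the identification $\Hom(F,\overline\Q)=\coprod_{\nu\mid\ell}\Hom(F_\nu,\overline\Q_\ell)$ and the decomposition of $\Phi$ accordingly. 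The point is that the $\nu$-component of $\epsilon_{\alg,\ell}$ involves exactly the embeddings in $\Phi\cap\Hom(F_\nu,\overline\Q_\ell)$.

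Next I would use the potential ordinarity hypothesis, via Lemma \ref{potordredlem}: for each conjugate pair $(\mu,\bar\mu)$ of places of $F$ above $\ell$, the set $\Phi$ restricted to $\Hom(F_\mu,\overline\Q_\ell)\amalg\Hom(F_{\bar\mu},\overline\Q_\ell)$ is \emph{all} of one of the two halves. Thus the factor $F_\ell^\Phi=\prod_{\nu\in\tilde\Phi}F_\nu$ collects, for each such pair, exactly one of $F_\mu$ or $F_{\bar\mu}$, and on that factor the embeddings appearing are the full set $\Hom(F_\nu,\overline\Q_\ell)$. Feeding this into the component formula for $\epsilon_{\alg,\ell}$, the composite $K_\ell^\times\to F_\ell^\times\to(F_\ell^\Phi)^\times$ becomes, on each factor $F_\nu^\times$ with $\nu\in\tilde\Phi$, the product over \emph{all} embeddings $F_\nu\hookrightarrow\overline\Q_\ell$ of the corresponding conjugates --- i.e.\ a norm. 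Assembling these, the composite is the full norm $\Nm_{K_\ell/\Q_\ell}\colon K_\ell^\times\to\Q_\ell^\times$, landing in the diagonal $\Q_\ell^\times\subseteq(F_\ell^\Phi)^\times$, up to the sign in the exponent.

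Finally I would pin down the exponent $-1$. This is exactly the statement that $\epsilon_\alg$, restricted to the potentially ordinary (multiplicative) direct factor, is the \emph{inverse} of the cyclotomic norm, which is forced by compatibility with the Weil pairing: $T_\ell(\mu_{\ell^\infty})$ appears with a single power in the relation $T_\ell(A_L[\ell^\infty]^0)\cong X_*(\sA'|_e)\otimes T_\ell(\mu_{\ell^\infty})$ of Lemma \ref{cyclolem1}, and the Hecke character $\epsilon$ satisfies $\epsilon\cdot\bar\epsilon=\Nm_{F/\Q}^{-1}$ composed with $\Nm_{K/\Q}$ (the ``$\epsilon$ has weight $-1$'' normalization of \cite{chai_conrad_oort} Thm.\ 2.5.1), so that on the ordinary factor the full weight is concentrated, giving exponent $-1$ rather than $0$ or $-2$. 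I expect the main obstacle to be purely bookkeeping: correctly matching the arithmetic normalization of the reciprocity map $r_K$ and the sign conventions in the definition of $\epsilon_\alg$ in \cite{chai_conrad_oort} with the statement, so that the inverse (rather than the identity or the square) of the cyclotomic character comes out; once the conventions are fixed, the computation is a direct consequence of Lemma \ref{potordredlem} and the product decomposition of $F_\ell^\Phi$.
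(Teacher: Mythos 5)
Your proposal takes a genuinely different route from the paper. The paper's proof is entirely indirect and never writes out the reflex norm: it first observes that both $\epsilon_{\alg,\ell}$ and the inverse $\ell$-adic cyclotomic character are algebraic homomorphisms of $\Q_\ell$-tori, so by unirationality of tori it suffices to check the equality on an open $\ell$-adic neighborhood of the identity in $K_\ell^\times$. Since $\epsilon$ is continuous with discrete target $F^\times$, it is trivial near the identity, so the decomposition $\epsilon = (\psi_\ell\circ r_K)\cdot\epsilon_{\alg,\ell}$ gives $\epsilon_{\alg,\ell}=(\psi_\ell\circ r_K)^{-1}$ there. After passing to the completed maximal unramified extension $L$ and a finite $L'/L$ with good ordinary reduction, the composite of $\psi_\ell$ with the projection to $(F_\ell^\Phi)^\times$ is exactly the cyclotomic character on $\Gal(\overline{L}/L')$ --- which is what Lemma \ref{cyclolem1} supplies --- and the claim follows. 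Your proposal instead computes $\epsilon_{\alg,\ell}$ explicitly as a reflex-norm-type morphism and uses Lemma \ref{potordredlem} to see that $\Phi$ is full on each $F_\nu$ with $\nu\in\tilde{\Phi}$, so that the composite to $(F_\ell^\Phi)^\times$ collapses to a norm valued in the diagonal $\Q_\ell^\times$. This is a legitimate alternative and has the merit of being concrete, but it trades the paper's one-line appeal to continuity of $\epsilon$ for the combinatorics of the reflex construction; the ``bookkeeping'' you postpone (the reflex field $E^*$, the reflex type $\Phi^*$, and their interaction with the decomposition of $\ell$) is in fact most of the work.

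Two specific cautions. First, your assertion that the $\nu$-component of $\epsilon_{\alg,\ell}$ ``involves exactly the embeddings in $\Phi\cap\Hom(F_\nu,\overline{\Q}_\ell)$'' is not automatic: $\epsilon_\alg$ is the reflex norm precomposed with $\Nm_{K/E^*}$, so its $\nu$-component is a priori governed by $\Phi^*$ and by how places of $E^*$ above $\ell$ interact with $\nu$, not directly by $\Phi\cap\Hom(F_\nu,\overline{\Q}_\ell)$; that the two descriptions agree on the ordinary factor is part of what must be proved. Second, your final paragraph determining the exponent $-1$ simultaneously appeals to the cyclotomic twist of Lemma \ref{cyclolem1} and to a weight relation for $\epsilon$; but Lemma \ref{cyclolem1} is precisely the engine of the paper's indirect argument, and importing it into a supposedly direct computation of $\epsilon_\alg$ leaves it unclear which mechanism is doing the work. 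In a genuinely direct proof, the exponent should fall out of the sign in the formula for $\epsilon_\alg$ in \cite{chai_conrad_oort} Thm.\ 2.5.1 and the arithmetic normalization of $r_K$, without any reference to the Tate module of the multiplicative part. As written, this last step is the real gap; either carry the reflex-norm formula through to the sign, or switch wholesale to the paper's argument, where the exponent is forced by the near-identity triviality of $\epsilon$ together with Lemma \ref{cyclolem1}.
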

\begin{proof}
Since both $\epsilon_{\alg,\ell}$ and the inverse of the cyclotomic character are algebraic and since tori are unirational, it suffices to check the desired equality on an open neighborhood of the identity in $K_\ell^\times$. Since $\epsilon$ is continuous and since
\[
\epsilon\,=\,(\psi_\ell\circ r_K)\cdot\epsilon_{\alg,\ell}\;,
\]
we have
\[
\epsilon_{\alg,\ell}\,=\,(\psi_\ell\circ r_K)^{-1}
\]
on an open neighborhood of the identity in $\A_K^\times$. Let $\nu$ be a place of $K$ above $\ell$, and let $x$ vary in the induced open neighborhood of the direct factor $K_\nu^\times$ of $K_\ell^\times$; then we see that $\epsilon_{\alg,\ell}(x)$ is the inverse of the $\psi_\ell$-image of $r_{K_\nu}(x)\in\Gal(K_\nu^\ab/K_\nu)$ in $\Gal(K^\ab/K)$. The desired equality may be checked after base change from $K_\nu$ to the completed maximal unramified closure $L$ of $K_\nu$, and then again locally near an open neighborhood of the identity of $L^\times$ corresponding to an open neighborhood $\Gal(\overline{L}/L')$ of $\Gal(\overline{L}/L)$ such that $A_{L'}$ has good ordinary reduction. There, the Galois action is indeed given by the cyclotomic character, as we have seen in Lemma \ref{cyclolem1} and its proof, so the statement follows.
\end{proof}

In summary:

\begin{prop}
Let $\epsilon_\nu$ denote the restriction of $\epsilon$ to $K_\nu^\times$; then $\epsilon_\nu$ is continuous, and the $\Q_\ell[\Gal(\overline{L}/L)]$-module $X_*(\sA'|_e))_{\Q_\ell}$ is isomorphic to $F_\ell^\Phi$, where the $\Gal(\overline{L}/L)$-action on $F_\ell^\Phi$ is given by 
\[
K_\nu^\times\overset{\epsilon_\nu}{\longrightarrow}F^\times\rightarrow (F_\ell^\Phi)^\times\;,
\]
by the local arithmetically normalized reciprocity map 
\[
r_{K_\nu}:K_\nu^\times\rightarrow\Gal(K_\nu^\ab/K_\nu)
\]
and by the natural map $\Gal(\overline{L}/L)\rightarrow\Gal(\overline{K}_\nu/K_\nu)$.
\end{prop}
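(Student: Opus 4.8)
The plan is to deduce the statement by assembling the three results that immediately precede it. First I would dispose of the continuity of $\epsilon_\nu$, which costs nothing: the algebraic Hecke character $\epsilon$ is continuous, and $\epsilon_\nu$ is simply its restriction along the inclusion $\iota_\nu\colon K_\nu^\times\hookrightarrow\A_K^\times$ of the $\nu$-component, so there is nothing to prove for that half of the assertion. The real content is the identification of the $\Q_\ell[\Gal(\overline L/L)]$-module $X_*(\sA'|_e)_{\Q_\ell}$, and the mechanism is that the two cyclotomic twists entering along the way cancel.

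More precisely, I would argue as follows. By Lemma~\ref{cyclolem1}, after $\otimes_{\Z_\ell}\Q_\ell$ one has
\[
X_*(\sA'|_e)_{\Q_\ell}\;\cong\;V_\ell(A_L[\ell^\infty]^0)\otimes_{\Q_\ell}V_\ell(\mu_{\ell^\infty})^\vee\;,
\]
that is, $X_*(\sA'|_e)_{\Q_\ell}$ is $V_\ell(A_L[\ell^\infty]^0)$ with its $\Gal(\overline L/L)$-action twisted by the inverse of the $\ell$-adic cyclotomic character $\chi_\ell$. On the other hand $V_\ell(A_L[\ell^\infty]^0)\cong F_\ell^\Phi$ with $\Gal(\overline L/L)$ acting through $\psi_{\ell,L}$ followed by the projection $F_\ell^\times\twoheadrightarrow(F_\ell^\Phi)^\times$ (acting by multiplication). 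Now I would take $g\in\Gal(\overline L/L)$; since $L$ is the completion of $K_\nu^\ur$, the image of $g$ in $\Gal(K_\nu^\ab/K_\nu)$ lies in the inertia subgroup, hence equals $r_{K_\nu}(x)$ for a suitable $x\in\O_{K_\nu}^\times$, and by compatibility of local and global reciprocity the image of $g$ in $\Gal(K^\ab/K)$ is $r_K(\iota_\nu(x))$. Evaluating the factorization $\epsilon(y)=\psi_\ell(r_K(y))\cdot\epsilon_{\alg,\ell}(y_\ell)$ at $y=\iota_\nu(x)$ --- so that $\epsilon(y)=\epsilon_\nu(x)$ and $y_\ell$ is the idèle $x$ placed in the $\nu$-component --- gives
\[
\psi_{\ell,L}(g)\;=\;\psi_\ell\bigl(r_K(\iota_\nu(x))\bigr)\;=\;\epsilon_\nu(x)\cdot\epsilon_{\alg,\ell}(y_\ell)^{-1}\qquad\text{in }F_\ell^\times .
\]
Projecting into $(F_\ell^\Phi)^\times$ and invoking the lemma immediately preceding this proposition --- which says that $\epsilon_{\alg,\ell}$ sends $y_\ell$ to $(\Nm_{K_\ell/\Q_\ell}y_\ell)^{-1}=(\Nm_{K_\nu/\Q_\ell}x)^{-1}$ there, and that this composite is the inverse of $\chi_\ell$ --- one finds that $g$ acts on $V_\ell(A_L[\ell^\infty]^0)$ by multiplication by the image of $\epsilon_\nu(x)$ in $(F_\ell^\Phi)^\times$ times $\chi_\ell(r_{K_\nu}(x))=\Nm_{K_\nu/\Q_\ell}(x)$; twisting by $\chi_\ell^{-1}$ then kills that last factor, so $g$ acts on $X_*(\sA'|_e)_{\Q_\ell}$ by multiplication by the image of $\epsilon_\nu(x)$ in $(F_\ell^\Phi)^\times$. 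Transporting the isomorphism of Lemma~\ref{cyclolem1} to $F_\ell^\Phi$ then yields exactly the asserted description of the Galois action.

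The main obstacle I anticipate is purely one of bookkeeping: keeping careful track of the two cyclotomic twists and of the normalization conventions for $r_{K_\nu}$ and for $\chi_\ell$, so that the two norm factors really do cancel rather than compound. Once the conventions of Lemma~\ref{cyclolem1} and of the preceding lemma are fixed consistently with the arithmetic normalization of class field theory, the remainder of the argument is a formal combination of those two statements with the factorization $\epsilon=(\psi_\ell\circ r_K)\cdot\epsilon_{\alg,\ell}$ and the compatibility of local and global reciprocity.
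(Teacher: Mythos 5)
Your proposal is correct and is essentially the proof the paper intends: the proposition appears after the phrase ``In summary,'' and the argument really is the assembly of Lemma~\ref{cyclolem1}, the immediately preceding lemma about the projection of $\epsilon_{\alg,\ell}$ to $(F_\ell^\Phi)^\times$ being the inverse cyclotomic character, the factorization $\epsilon=(\psi_\ell\circ r_K)\cdot\epsilon_{\alg,\ell}$, and the compatibility of local and global reciprocity, with the two cyclotomic twists cancelling exactly as you describe. One small imprecision: the continuity of $\epsilon_\nu$ is not because ``$\epsilon$ is continuous'' (as a map to the discrete group $F^\times$ it is not continuous on the archimedean component); rather it is because an algebraic Hecke character with values in $F^\times$ is by definition trivial on a compact open subgroup of $\A_{K,f}^\times$, hence $\epsilon_\nu$ has open kernel in $\O_{K_\nu}^\times$.
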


\bibliographystyle{plain}
\bibliography{artin_conductor}

\end{document}